\theoremstyle{definition}
\newtheorem{cor}[subsection]{Corollary}
\newtheorem{lem}[subsection]{Lemma}
\newtheorem{prop}[subsection]{Proposition}
\newtheorem{thm}[subsection]{Theorem}
\newtheorem{defn}[subsection]{Definition}
\newtheorem{eg}[subsection]{Example}
\newtheorem{rem}[subsection]{Remark}
\newtheorem{conj}[subsection]{Conjecture}
\newtheorem{ques}[subsection]{Question}
\newcommand{\QQ}{\mathbb Q}
\newcommand{\RR}{\mathbb R}
\newcommand{\PP}{\mathbb P}
\newcommand{\ZZ}{\mathbb Z}
\title{Divisors on matroids and their volumes}
\author{Christopher Eur}
\date{}
\begin{document}

\maketitle

\begin{abstract}\vspace{-20pt}
The classical volume polynomial in algebraic geometry measures the degrees of ample (and nef) divisors on a smooth projective variety.  We introduce an analogous volume polynomial for matroids, and give a complete combinatorial formula.  For a realizable matroid, we thus obtain an explicit formula for the classical volume polynomial of the associated wonderful compactification.  We then introduce a new invariant called the shifted rank volume of a matroid as a particular specialization of its volume polynomial, and discuss its algebro-geometric and combinatorial properties in connection to graded linear series on blow-ups of projective spaces.
\end{abstract}

\section{Introduction}

The volume polynomial in classical algebraic geometry measures the self-intersection number of a nef divisor, or equivalently the volume of its Newton-Okounkov body.  Here we define analogously the volume polynomial $VP_M$ for a matroid $M$.   Let $M$ be a matroid on a ground set $E$ with lattice of flats $\mathscr L_M$, and denote $\overline{\mathscr L_M} := \mathscr L_M\setminus\{\emptyset, E\}$.  Recall the definition of the Chow ring of a matroid:

\begin{defn}
The \textbf{Chow ring} of a simple matroid $M$ is the graded ring
$$A^\bullet(M):=\frac{ \ZZ[x_F : F\in \overline{\mathscr L_M}] }{ \langle x_Fx_{F'} \ | \ F,F' \textnormal{ incomparable}\rangle + \langle \sum_{F\ni i}x_F - \sum_{G\ni j} x_G \ | \ i,j\in E\rangle }$$
\end{defn}

In analogy to Chow rings in algebraic geometry, we call elements of $A^1(M)$ \textbf{divisors} on a matroid $M$.  The ring $A^\bullet(M)_\RR$ satisfies Poincar\'e duality \cite[Theorem 6.19]{AHK18} with the degree map
$$\begin{array}{cl}
\deg_M: A^{\operatorname{rk}M-1}(M)_\RR \overset \sim \to \RR \quad & \textnormal{ where }\deg_M(x_{F_1}x_{F_2}\cdots x_{F_d}) = 1\\
& \textnormal{ for every maximal chain } F_1\subsetneq \cdots \subsetneq F_d \textnormal{ in }  \overline{\mathscr L_M}.
\end{array}$$
In other words, $A^\bullet(M)_\RR$ is an Artinian Gorenstein $\RR$-algebra, so that the Macaulay inverse system gives a well-defined cogenerator $VP_M$ of $A^\bullet(M)_\RR$.

\newtheorem*{defn:volpoldefn}{Definition \ref{volpoldefn}}
\begin{defn:volpoldefn}
Let $M$ be a matroid of rank $r = d+1$.  The \textbf{volume polynomial} $VP_M(\underline t) \in \RR[t_F : F\in \overline{\mathscr L_M}]$ is the cogenerator of $A^\bullet(M)_\RR$, where $VP_M(\underline t)$ is normalized so that the coefficient of any monomial $t_{F_1}t_{F_2}\cdots t_{F_d}$ corresponding to a maximal chain of flats in $\mathscr L_M$ is $d!$.
\end{defn:volpoldefn}

When $M$ is realizable, $VP_M$ agrees with the classical volume polynomial of the wonderful compactification of the complement of the associated hyperplane arrangement of $M$.  While $VP_M$ is initially defined purely algebraically, we prove a completely combinatorial formula for $VP_M$, which follows from our first main theorem on the intersection numbers of divisors on a matroid.

\newtheorem*{thm:main}{Theorem \ref{main}}
\begin{thm:main}
Let $M$ be a matroid of rank $r = d+1$ on a ground set $E$, and $\emptyset = F_0 \subsetneq F_1 \subsetneq \cdots \subsetneq F_k \subsetneq F_{k+1} = E$ a chain of flats in $\mathscr L_M$ of ranks $r_i := \operatorname{rk} F_i$, and $d_1, \ldots, d_k$ be positive integers such that $\sum_i d_i = d$.  Denote by $\widetilde d_i := \sum_{j=1}^i d_j$.  Then
$$\deg(x_{F_1}^{d_1}\cdots x_{F_k}^{d_k}) = (-1)^{d-k} \prod_{i = 1}^{k}{d_i -1 \choose \widetilde d_i - r_i}\mu^{\widetilde d_i - r_i}(M|F_{i+1}/F_i)$$
where $\mu^i(M')$ denotes the $i$-th unsigned coefficient of the reduced characteristic polynomial $\overline{\chi}_{M'}(t) = \mu^0(M') t^{\operatorname{rk} M'-1} - \mu^1(M') t^{\operatorname{rk}M'-2} + \cdots \pm \mu^{\operatorname{rk}M'-1}(M')$ of a matroid $M'$.
\end{thm:main}

\newtheorem*{cor:regdim}{Corollary \ref{volpol}}
\begin{cor:regdim}
 Let the notations be as above.  The coefficient of $t_{F_1}^{d_1}\cdots t_{F_k}^{d_k}$ in $VP_M(\underline t)$ is
$$(-1)^{d-k}{d \choose d_1, \ldots, d_k} \prod_{i = 1}^{k} {d_i - 1 \choose \widetilde d_i -r_i} \mu^{\widetilde d_i - r_i}(M|F_{i+1}/F_i).$$
\end{cor:regdim}

As a first application, we give an explicit formula for the volumes of generalized permutohedra, adding to the ones given by Postnikov in \cite{Pos09}.

\newtheorem*{prop:GPvol}{Proposition \ref{volGP}}
\begin{prop:GPvol}
Let $z_{(\cdot )}: [n]\supset I \mapsto z_I\in \RR$ be a submodular function on the boolean lattice of subsets of $[n] := \{1, \ldots, n\}$ such that $z_\emptyset = z_{[n]} = 0$.  Then the volume of the generalized permutohedron $P(\underline z) = \{ (\underline x) \in \RR^n \ | \ \sum_{i\in [n]} x_i = z_{[n]}, \ \sum_{i\in I} x_i \leq z_I \ \forall I\subset [n]\} $ is
$$(n-1)! \operatorname{Vol} P(\underline z) = \sum_{I_\bullet, \underline d} (-1)^{d-k} {d \choose d_1, \ldots, d_k}\prod_{i=1}^k {d_i -1 \choose \widetilde d_i - |I_i|} {|I_{i+1}| - |I_i| - 1 \choose \widetilde d_i - |I_i|}z_{I_i}$$
where the summation is over chains $\emptyset \subsetneq I_1 \subsetneq \cdots \subsetneq I_k\subsetneq I_{k+1} = [n]$ and $\underline d = (d_1, \ldots, d_k)$ such that $\sum d_i = n-1$ and $\widetilde d_j:= \sum_{i=1}^j d_i$.
\end{prop:GPvol}

It was not clear to the author how the two formulas are related.

\newtheorem*{ques:GPvolques}{Question \ref{GPvolques}}
\begin{ques:GPvolques}
Can one derive the formula in Proposition \ref{volGP} directly from \cite[Corollary 9.4]{Pos09}, or vice versa?
\end{ques:GPvolques}

Under the Macaulay inverse system, the Chow ring $A^\bullet(M)$ and its cogenerator $VP_M$ are equivalent algebraic objects, but the volume polynomial $VP_M$ lends itself more naturally as a function of a matroid than the Chow ring.  As an illustration, by considering $VP_M(\underline t)$ as a polynomial in $\RR[t_S : S\in 2^E]$, we show that the map $M \mapsto VP_M$ is a valuation under matroid polytope subdivisions.

\newtheorem*{prop:val}{Proposition \ref{val}}
\begin{prop:val}
$M\mapsto  VP_M$ is a (non-additive) matroid valuation in the sense of \cite{AFR10}.
\end{prop:val}

That the volume polynomial of a matroid behaves well with respect to the matroid polytope and that the matroid-minor Hopf monoid structure arises in the proof of the proposition above suggest there may be a generalization of Hodge theory of matroids to arbitrary Lie type.

\newtheorem*{ques:ADE}{Question \ref{ADE}}
\begin{ques:ADE}
Is there are Hodge theory of Coxeter matroids, generalizing the Hodge theory of matroids as described in \cite{AHK18}?  (For Coxeter matroids, see \cite{BGW03}).
\end{ques:ADE}

 Moreover, any natural specialization of $VP_M$ defines an invariant of a matroid.  In our case, we consider specializing $VP_M$ via the rank function of a matroid to obtain a new invariant.

\newtheorem*{defn:voldefn}{Definition \ref{voldefn}}
\begin{defn:voldefn}
For a matroid $M$, define its \textbf{shifted rank divisor} $D_M$ to be
$$D_M := \sum_{F\in \overline{\mathscr L_M}} (\operatorname{rk} F) x_F,$$
and define the \textbf{shifted rank volume} of $M$ to be the volume of its shifted rank  divisor:
$$\operatorname{shRVol}(M) := VP_M(t_F := \operatorname{rk} F) = \deg\Big(\sum_{F\in \overline{\mathscr L_M}} (\operatorname{rk} F)x_F\Big)^{\operatorname{rk} M-1}.$$
\end{defn:voldefn}

We remark that in a forthcoming paper \cite{Eur19}, the author discusses various divisors on a matroid, including what the author calls the rank divisor, to which the shifted rank divisor here is closely related.  The shifted rank of a matroid seems to be a genuinely new invariant, as it is unrelated to classical invariants such as the Tutte polynomial or the volume of the matroid polytope; see Remark \ref{volrel}.  For realizable matroids, the shifted rank volume of a matroid measures how close the matroid is to the uniform matroid.

\newtheorem*{thm:realmax}{Theorem \ref{realmax}}
\begin{thm:realmax}
Let $M$ be a realizable matroid of rank $r$ on $n$ elements.  Then 
$$\operatorname{shRVol}(M) \leq \operatorname{shRVol}(U_{r,n}) = n^{r-1} \quad \textnormal{ with equality iff $M = U_{r,n}$.}
$$
\end{thm:realmax}

The primary tool for the proof of Theorem \ref{realmax} is algebro-geometric in nature, relying on the realizability of the matroid.  In the forthcoming work \cite{Eur19} the author gives a proof Theorem \ref{realmax} without the realizability condition, but the proof is not a combinatorial reflection of the geometric one we give here.  Trying to apply similar method as in the proof of Theorem \ref{realmax} for the non-realizable matroids naturally leads to the following question.

\newtheorem*{ques:NObody}{Question \ref{NObody}}
\begin{ques:NObody}
Is there a naturally associated convex body of which this volume polynomial is measuring the volume?  In other words, is there a theory of Newton-Okounkov bodies for general matroids (a.k.a.\ linear tropical varieties)?
\end{ques:NObody}

\begin{rem}
Recent works on the Chow ring of matroids have led to the resolution of the long-standing conjecture of Rota on the log-concavity of the coefficients of chromatic polynomials, first proven for realizable matroids in \cite{Huh12} and \cite{HK12}, and for general matroids in \cite{AHK18}.  Among the key tools in \cite{Huh12} is the Teissier-Khovanskii inequality for intersection numbers of nef divisors, which can be understood as a phenomenon of convexity:  The Newton-Okounkov body $\Delta(D)$ of a nef divisor $D$ is a convex body whose volume is the self-intersection number of $D$, and its existence reduces the Teissier-Khovanskii inequality to the Brunn-Minkowski inequality for volumes of convex bodies.

For general matroids, a combinatorial version of Teissier-Khovanskii inequality is proven in \cite{AHK18} by establishing Hodge theory analogues for the Chow ring of a matroid without explicit use of convex bodies.  Noting that matroids can be considered as tropical linear varieties (\cite{AK06}), the results of \cite{AHK18} suggest an existence of an analogue of Newton-Okounkov bodies for tropical linear varieties, and perhaps tropical varieties in general.  Our results here can be seen as a first step towards in a direction.
\end{rem}

On the flip side of maximal volumes, we have the following conjecture on the minimal values.

\newtheorem*{conj:min}{Conjecture \ref{min}}
\begin{conj:min}
The minimum volume among simple matroids of rank $r$ on $n$ is achieved uniquely by the matroid $U_{r-2,r-2}\oplus U_{2,n-r+2}$, and its volume is $r^{r-2}((n-r+1)(r-1)+1)$.
\end{conj:min}

\subsection*{Structure of the paper} In section \S2, we review relevant definitions and results about Chow rings of matroids and volume polynomials in algebraic geometry.  In section \S3, we define the volume polynomial of a matroid and give a combinatorial formula.  First applications of the volume polynomial is discussed in section \S4, where we give another formula for the volumes of generalized permutohedra and show that the volume polynomial is a matroid valuation.  In section \S5, we define the shifted rank volume of a matroid as a particular specialization of the volume polynomial, and analyze some of its algebro-geometric and combinatorial properties.  Lastly, in section \S6, we feature some examples.

A Macaulay2 file \verb+volumeMatroid.m2+ implements many of notions here, along with computations for the examples provided.  It can be found at \url{https://math.berkeley.edu/~ceur/research.html}.

\proof[Notations] $|S|$ denotes the cardinality of a (finite) set $S$.  We use $\mathbbm k$ for a field, which we always assume algebraically closed, and a $\mathbbm k$-variety is an integral separated scheme over $\mathbbm k$.  A binomial coefficient ${n \choose m}$ is understood to be zero if $m<0$ or $m>n$.

\section{Preliminaries}

In this section, we set up relevant notations and review previous results about Chow rings of matroids, wonderful compactifications, volume polynomials, and cogenerators in the classical setting.

\subsection*{Wonderful compactifications and Chow rings of matroids}   We assume some familiarity with the basics of matroid theory, and point to  \cite{Oxl11}, \cite{Wel76}, or the collection \cite{Whi86} as a general reference for matroids.  For accounts tailored towards Chow ring of matroids, we recommend \cite{Bak18}, \cite{Kat16}, or \cite{Huh18}.

\medskip
Among various axiomatic systems for matroids, the flats description reproduced below for the convenience of the reader is the most relevant in our setting.

\begin{defn}\label{defn:flats}
A \textbf{matroid} $M = (E,\mathscr L_M)$ consists of a finite set $E$, called its ground set, and a collection $\mathscr L_M$ of subsets $E$, called the set of \textbf{flats} of $M$, satisfying
\begin{enumerate}
\item[(F1)] $E\in \mathscr L_M$,
\item[(F2)] $F_1, F_2 \in \mathscr L_M \implies F_1 \cap F_2 \in \mathscr L_M$, and
\item[(F3)] for each $F\in \mathscr L_M$, let $\mathscr L_M^{\gtrdot F}$ be the set of flats that cover $F$, i.e.\
$$\mathscr L_M^{\gtrdot F} := \{G \in \mathscr L_M \ | \ G\supsetneq F, \textnormal{ and } G\supsetneq G' \supsetneq F \Rightarrow G'\notin \mathscr L_M\}.$$
Then $\{G \setminus F \ | \ G\in \mathscr L_M^{\gtrdot F}\}$ partition $E\setminus F$. 
\end{enumerate}
\end{defn}

The axiom (F3) above is often called the \textbf{cover-partition axiom} for flats of a matroid.  We will always assume that a matroid $M$ is \textbf{loopless}; that is, $\emptyset \in \mathscr L_M$.  The flats of a matroid $M$ form a lattice by inclusion; we will denote this lattice also by $\mathscr L_M$ and denote $\overline{ \mathscr L_M} := \mathscr L_M \setminus \{\emptyset, E\}$.  Moreover, we set the following notations for matroids: A (loopless) matroid $M$ of rank $r = d+1$ on a ground set $E$ has
\begin{itemize}
\item $\operatorname{rk}_M$ its rank function,
\item $\mathfrak A^\bullet(M)$ the atoms, i.e.\ rank 1 flats, of the lattice $\mathscr L_M$,
\item $\chi_M$ (resp.\ $\overline\chi_M$) the (resp.\ reduced) characteristic polynomial, and
\item $M|S$ (resp.\ $M/S$) the restriction (resp.\ contraction) of $M$ to (resp.\ by) $S\subset E$.
\end{itemize}
Additionally, for $F, F'$ flats of $M$ such that $F\subset F'$, we denote by
$$[F,F'] := \{G \in \mathscr L_M \ | \ F \subseteq G \subseteq F'\} \quad\textnormal{and}\quad (F,F') := \{G\in \mathscr L_M \ | \ F \subsetneq G \subsetneq F'\}$$
the closed interval and open interval defined by $F,F'$ in $\mathscr L_M$.  We note the following fact (\cite[Proposition 3.3.8]{Oxl11})
$$\mathscr L_{M|F'/F} \simeq [F,F'].$$ 

\medskip
The data of lattice of flats of $M = (E,\mathscr L_M)$ can be represented by a polyhedral fan called its Bergman fan.  We prepare by setting the following notations:
\begin{itemize}
\item $\{e_i \ | \ i \in E\}$ the standard basis of $\ZZ^E$ and $\langle \cdot, \cdot \rangle$ the standard dot-product on $\ZZ^E$,
\item $N := \ZZ^E/\ZZ \mathbf 1$ be a lattice where $\mathbf 1$ denotes the all 1 vector,
\item $u_i$ the image of $e_i$ in $N$ for $i\in E$,
\item the dual lattice $N^\vee$ of $N$, which identified to a sublattice of $\ZZ^E$ via the inner product $\langle \cdot, \cdot \rangle$ as $N^\vee = \mathbf 1^\perp := \{m \in \ZZ^E \ | \ \langle m, \mathbf 1 \rangle = 0 \}$.
\end{itemize}

\begin{defn}
Let $M$ be a matroid of rank $r = d+1$ on a ground set $E$.  With the notations as above, the \textbf{Bergman fan} $\Sigma_M$ is the pure $d$-dimensional polyhedral fan in $N_\RR := N \otimes \RR$ that is comprised of cones$$\sigma_{\mathscr F} := \operatorname{Cone}(u_{F_1}, u_{F_2}, \cdots, u_{F_k}) \subset N_\RR$$
for each chain of flats $\mathscr F: \emptyset \subsetneq F_1\subsetneq \cdots\subsetneq F_k\subsetneq E$ in $\mathscr L_M$.
\end{defn}


\medskip
We now motivate the definition of the Chow ring of a matroid by sketching its algebro-geometric origin as the cohomology ring the wonderful compactification of a hyperplane arrangement.  Let $M$ be a loopless matroid on $E = \{0,1,\ldots, n\}$ of rank $r=d+1$ realizable over a field $\mathbbm k$, which we may assume to be algebraically closed.  A \textbf{realization} $\mathscr R(M)$ of $M$ consists of the following equivalent data of
\begin{itemize}
\item a list of vectors $E = \{v_0, \ldots, v_n\}$ spanning a $\mathbbm k$-vector space $V\simeq \mathbbm k^r$,
\item a surjection $\mathbbm k^{n+1} \twoheadrightarrow V$ where $e_i \mapsto v_i$, or
\item an injection $\PP V^* \hookrightarrow \PP_{\mathbbm k}^n$, obtained by projectivizing the dual of the surjection $\mathbbm k^{n+1} \twoheadrightarrow V$.
\end{itemize}

For a realization $\mathscr R(M)$ of $M$ with $\PP V^* \hookrightarrow \PP^n$, the intersections of coordinate hyperplanes of $\PP^{n}$ with $\PP V^*$ then define then the associated \textbf{hyperplane arrangement}
$$\mathcal A_{\mathscr R(M)} = \{ L_a \}_{a\in \mathfrak A^\bullet(M)}, \textnormal{ where } L_a:= \{f\in \PP V^* \ | \ f(v_i) = 0 \ \forall v_i \in a\}$$
Each $c$-codimensional linear subspace $L_F\subset \PP V^*$, obtained by intersections of hyperplanes in $\mathcal A_{\mathscr R(M)}$, correspond to a rank $c$ flat $F$ of $M$ by
$$L_F := \{f\in \PP V^* \ | \ f(v_i) = 0 \ \forall v_i\in F\}.$$
We denote by $C(\mathcal A_{\mathscr R(M)})$ the hyperplane arrangement complement $C(\mathcal A_{\mathscr R(M)}) := \PP V^* \setminus \bigcup \mathcal A_{\mathscr R(M)}$.

\begin{defn}\label{defn:wndcpt}
The \textbf{wonderful compactification} $X_{\mathscr R(M)}$ of the complement $C(\mathcal A_{\mathscr R(M)})$ is obtained by a series of blow-ups on $\PP V^*$ in the following way:  First blow-up the points $\{L_F\}_{\operatorname{rk}(F) = d}$, then blow-up the strict transforms of the lines $\{L_F\}_{\operatorname{rk}(F) = d-1}$, and continue until having blown-up strict transforms of $\{L_F\}_{\operatorname{rk}(F) = 1}$.  Let $\pi_M: X_{\mathscr R(M)} \to \PP V^*$ be the blow-down map.
\end{defn}

See \cite{DCP95} for the original construction of $X_{\mathscr R(M)}$ or \cite{Fei05} for a survey geared towards combinatorialists.  The boundary $X_{\mathscr R(M)}\setminus C(\mathcal A_{\mathscr R(M)})$ consists of the exceptional divisors $\widetilde{L_F}$.  The intersection theory of these boundary divisors and moreover the cohomology ring of $X_{\mathscr R(M)}$ are encoded in the matroid.  This was first observed in \cite{DCP95}, and led the authors of \cite{FY04} to define the following.

\begin{defn}
Let $M$ be a (loopless) matroid on a ground set $E$, and notate $N = \ZZ^E/\ZZ\mathbf1$, $N^\vee = \mathbf1^\perp$ as before.  The \textbf{Chow ring}\footnote{Chow rings of algebraic varieties initially take coefficients in $\ZZ$, but often one can tensor by $\RR$ to work with algebraic cycles modulo numerical equivalence \cite[Appendix C.3.4]{EH16}.  In this article, we will always assume that we are working with coefficients in $\RR$ for convenience.} $A^\bullet(M)$ of $M$ is defined as the Chow ring of the toric variety of the Bergman fan $\Sigma_M \subset N_\RR$.  Explicitly, it is the graded ring
$$A^\bullet(M):= \frac{ \RR[x_F : F\in \overline{\mathscr L_M}] }{\mathcal I_M + \mathcal J_M}$$
where the ideal $\mathcal I_M$ consists of quadrics by
$$\mathcal I_M := ( x_Fx_F' \ | \ \textnormal{$F,F'$ not comparable})$$
and the ideal $\mathcal J_M$ consists of linear relations by
$$\mathcal J_M := \Big(\sum_{F\in \overline{\mathscr L_M}}\langle m, u_F\rangle x_F \ | \ m \in N^\vee \Big).$$
We call elements of $A^1(M)$ \textbf{divisors} on $M$.
\end{defn}

We remark that a more familiar presentation of the linear relations
$$\mathcal J_M = \textstyle \big( \sum_{F\ni i} x_F - \sum_{G\ni j} x_G \ | \ i, j \in E \big)$$
for the Chow ring $A^\bullet(M)$ is recovered by considering the generating set $\{e_i - e_j\}_{i,j\in E}$ of $N^\vee$.

\medskip
When $M$ has a realization $\mathscr R(M)$, the variables $x_F$ correspond to the exceptional divisors $\widetilde {L_F}$, and the cohomology ring of $X_{\mathscr R(M)}$ is isomorphic to $A^\bullet(M)$ (\cite[Corollary 2]{FY04}).

\begin{rem}\label{tropcpt} Let $M$ be a realizable matroid with realization $\mathscr R(M)$.  The Bergman fan $\Sigma_M$ of a realizable matroid $M$ is the tropicalization of the (very affine) linear variety $C(\mathcal A_{\mathscr R(M)})$ (\cite{AK06}).  The wonderful compactification $X_{\mathscr R(M)}$ is then a tropical compactification where we take the closure of $C(\mathcal A_M)$ in the toric variety $X_{\Sigma_M}$ of the Bergman fan.  The content of \cite[Corollary 2]{FY04} is that the inclusion $X_{\mathscr R(M)} \hookrightarrow X_{\Sigma_M}$ induces a Chow equivalence $A^\bullet(X_{\mathscr R(M)}) \simeq A^\bullet(X_{\Sigma_M})$, as the Chow ring of $X_{\Sigma_M}$, following \cite{Dan78} and \cite{Bri96}, satisfy $A^\bullet(X_{\Sigma_M}) \simeq A^\bullet(M)$.  For reference on tropical compactifications see \cite{MS15} or \cite{Den14}.
\end{rem}

Recently in \cite{AHK18}, the ring $A^\bullet(M)$ has been shown to satisfy the whole K\"ahler package---Poincar\'e duality, hard Lefschetz property, and Hodge-Riemann relations---which led to the proof of Rota's conjecture on the log-concavity of the unsigned coefficients of the characteristic polynomial of a matroid in the same paper.  For our purposes, we only need the Poincar\'e duality.

\begin{prop}\cite[Theorem 6.19]{AHK18} The Chow ring $A^\bullet(M)$ of a matroid $M$ of rank $r = d+1$ is a finite graded $\RR$-algebra satisfying the following.
\begin{enumerate}
\item There exists a linear isomorphism $\deg_M: A^d(M) \to \RR$ uniquely determined by the property that $\deg_M(x_{F_1}x_{F_2}\cdots x_{F_d}) = 1$ for every maximal chain $F_1\subsetneq \cdots \subsetneq F_d$ in $\overline{\mathscr L_M}$, and
\item For each $0\leq i \leq d$, the pairing $A^i(M) \times A^{d-i}(M) \to A^d(M) \overset{\deg}{\simeq}\RR$ is non-degenerate.
\end{enumerate}
\end{prop}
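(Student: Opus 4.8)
Part (1) amounts to the assertion that $A^\bullet(M)$ is finite-dimensional with $\dim_\RR A^d(M)=1$, together with the consistency of the normalization of $\deg_M$; granting (1), the $i=0$ and $i=d$ cases of (2) are immediate, so the real content lies in the middle pairings $0<i<d$. I would obtain (1) from the explicit presentation: the quadrics $\mathcal I_M$ force every nonzero monomial to be supported on a chain of proper flats, and the linear relations $\mathcal J_M$ then bound the exponents, so a Gr\"obner basis computation (Feichtner--Yuzvinsky, see \cite[Section 5]{AHK18}) exhibits $A^\bullet(M)$ as a finite-dimensional graded $\RR$-algebra concentrated in degrees $0,\dots,d$ with $\dim_\RR A^d(M)=1$; the same reduction shows that every maximal-chain monomial $x_{F_1}\cdots x_{F_d}$ is nonzero and that any two of them agree in $A^d(M)$, so declaring $\deg_M$ to send them all to $1$ is the unique consistent normalization.

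For nondegeneracy of the middle pairings I would first dispose of the realizable case. If $M$ has a realization $\mathscr R(M)$, then $X_{\mathscr R(M)}$ is an iterated blow-up of $\PP V^*\cong\PP^d$ along smooth centers, hence a smooth projective variety; its odd cohomology vanishes and the cycle class map is an isomorphism, so by \cite[Corollary 2]{FY04} the Poincar\'e pairing of $H^\bullet(X_{\mathscr R(M)};\RR)$ restricts to a perfect pairing on $A^\bullet(M)\cong H^{2\bullet}(X_{\mathscr R(M)};\RR)$, with $\deg_M$ given by $\int_{X_{\mathscr R(M)}}$. This settles everything except for non-realizable matroids, which is the substantive case.

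For a general matroid I would run the combinatorial induction of \cite{AHK18}. Fix a linear order on the proper flats refining inclusion; to each order filter $\mathcal P$ of this poset one attaches a smooth unimodular fan $\Sigma_{M,\mathcal P}$, obtained by performing only the star subdivisions indexed by $\mathcal P$, and its Chow ring $A^\bullet_{\mathcal P}(M)$, so that $A^\bullet_{\mathcal P}(M)=A^\bullet(M)$ when $\mathcal P$ is the whole poset, while at the base of the recursion $A^\bullet_{\mathcal P}(M)$ is the Chow ring of a projective space, for which (1) and (2) are classical. Removing a flat $Z$ minimal in $\mathcal P$ --- a \emph{matroid flip} --- corresponds to a single blow-down, and the projective-bundle/blow-up formula presents $A^\bullet_{\mathcal P}(M)$, as a graded $A^\bullet_{\mathcal P\setminus Z}(M)$-module, as the image of the pullback together with the Gysin pushforwards of finitely many truncations of $A^\bullet(M|Z)\otimes A^\bullet(M/Z)$; the matroids $M|Z$ and $M/Z$ have strictly smaller ground sets, so Poincar\'e duality for them is available by induction on $|E|$. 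One then checks that this decomposition is orthogonal for the degree pairing and that on each summand the pairing is, up to explicit degree shifts, the pairing on the smaller pieces, so Poincar\'e duality propagates from $A^\bullet_{\mathcal P\setminus Z}(M)$ to $A^\bullet_{\mathcal P}(M)$; iterating the flips from $\mathcal P=\emptyset$ up to the full poset yields (2) for $A^\bullet(M)$.

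The main obstacle is precisely this flip step. Writing down the module decomposition correctly requires understanding how the two unimodular fans differing by a single star subdivision are related, and identifying the ``new'' cones with (a truncation of) a product of two smaller Bergman fans; one must then verify that the Poincar\'e pairing is block-diagonal with the predicted blocks, i.e.\ control the Gysin maps $A^\bullet(M/Z)\to A^{\bullet+c}(M)$ and their compatibility with $\deg_M$. This bookkeeping is the technical heart of \cite{AHK18}; by contrast, the finiteness input of the first paragraph and the realizable base cases are comparatively routine.
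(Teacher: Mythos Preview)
The paper does not supply its own proof of this proposition: it is quoted as \cite[Theorem 6.19]{AHK18} and used as a black box throughout. So there is nothing in the present paper to compare your sketch against.

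That said, your outline is a faithful summary of the Adiprasito--Huh--Katz argument. The Feichtner--Yuzvinsky Gr\"obner basis gives finiteness and $\dim_\RR A^d(M)=1$, and Poincar\'e duality is then propagated along the sequence of matroidal flips $\Sigma_{M,\emptyset}\rightsquigarrow\cdots\rightsquigarrow\Sigma_{M,\overline{\mathscr L_M}}=\Sigma_M$, using at each flip a direct-sum decomposition of the Chow ring whose exceptional summands are controlled by the smaller matroids $M|Z$ and $M/Z$, available by induction on $|E|$. Your identification of the block-orthogonality of this decomposition under the degree pairing as the crux is exactly right. One minor point: the realizable case in your second paragraph is not a logical ingredient of the inductive scheme in \cite{AHK18}; the recursion bottoms out purely combinatorially at $\Sigma_{M,\emptyset}$, whose Chow ring is $\RR[\alpha]/(\alpha^{d+1})$ regardless of realizability, so that paragraph is motivation rather than a step in the proof.
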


\subsection*{Volumes of divisors and cogenerators} For general reference on intersection theory, see \cite{EH16} and \cite{Laz04}.  Here we mostly follow the survey \cite{ELM+05}.

\medskip

Let $X$ be a $d$-dimensional smooth projective variety over an algebraically closed field $\mathbbm k$, and let $A^\bullet(X)$ be its Chow ring and denote by $\deg_X$ or $\int_X$ the degree map $A^d(X) \to \ZZ$ sending a class of a closed point to 1.  For a Cartier divisor $D$ on $X$, the \textbf{volume} of $D$ is defined as
$$\operatorname{vol}(D) := \lim_{t\to \infty} \frac{h^0(X,\mathscr O(tD))}{t^d/d!}.$$
In other words, denoting by $R(D)_\bullet := \bigoplus_{t\geq 0} H^0(X,tD)$ the \textbf{section ring} of $D$, the volume measures the asymptotics of $\frac{\dim_{\mathbbm k} R(D)_t}{t^d/d!}$ as $t\to \infty$.

If $D$ is ample, then $\operatorname{vol}(D)>0$.  By standard relation between Hilbert polynomials and intersection multiplicities, volume of an ample divisor can be geometrically interpreted as follows: If $m>>0$ is such that $mD$ is very ample, then for general divisors $E_1, \ldots, E_d$ in the complete linear system $|mD|$ we have $\operatorname{vol}(D) = \frac{1}{m^d} \deg_X[E_1\cap E_2 \cap \cdots \cap E_d]$.  In other words, $\operatorname{vol}(D) = \int_X (c_1(D))^d$ if $D$ is ample.  

The volume of a divisor depends only on its numerical equivalence class.  Thus, letting $N^1(X)$ be the group of divisors modulo numerical equivalence generated by $\{\xi_1, \ldots, \xi_r\}$ and $\operatorname{Nef}(X)\subset N^1(X)_\RR$ the nef cone, the map $\operatorname{vol}: \operatorname{Nef}(X) \to \RR$ defines the \textbf{volume polynomial} $VP_X\in \RR[t_1, \ldots, t_r]$ where 
$$VP_X(t_1, \ldots, t_r) = \operatorname{vol}(t_1\xi_1+\cdots + t_r\xi_r)$$
whenever $t_1\xi_1+ \cdots + t_r\xi_r\in \operatorname{Nef}(X)$.

\medskip
Macaulay's inverse system and more generally the Matlis duality provide a purely algebraic approach to the notion of volume polynomial as the dual socle generator of an Artinian Gorenstein ring.  We sketch the connection here and refer to \cite{BH93} for details.

For a graded (or local complete) Noetherian ring $S$ with the residue field $k$ and its injective hull $E(k)$, the Matlis duality establishes a bijection
$$\{\textnormal{Artinian ideals $I \subset S$}\} \longleftrightarrow \{\textnormal{Noetherian $S$-submodules of $E(k)$}\}$$
via $I \mapsto \operatorname{Hom}_S(S/I,E(k)) = (0:_E I)$.  When $S/I$ is Artinian and Cohen-Macaulay, the bijection interchanges the type $r(S/I) := \dim_k \operatorname{Hom}_S(k,S/I)$ and the minimal number of generators $\mu((0:_E I))$.  In particular, if $S/I$ is Artinian Gorenstein, then $(0:_E I)$ is generated by a single element called the \textbf{cogenerator} or \textbf{dual socle generator} of $I$.  

When $\operatorname{char} k = 0$ and $S = k[x_0, \ldots, x_n]$ the standard graded polynomial ring, its injective hull is $E = k[\partial_0, \ldots, \partial_n]$ where $S$ acts on $E$ by $f\cdot \partial_i := \frac{\partial f}{\partial x_i}$.  The following proposition then shows the equivalence of the cogenerator and the volume polynomial when the Chow ring $A(X)_\QQ$ is an Artinian Gorenstein ring.

\begin{prop}\cite[Theorem 13.4.7]{CLS11} \label{cogen}
Suppose a graded finite $k$-algebra $A = \bigoplus_{i=0}^d A_i$ satisfies the following:
\begin{enumerate}
\item[(i)] $A$ is generated in $A_1$, with $A_0 = k$,
\item[(ii)] there exists a $k$-linear isomorphism $\deg: A_d \to k$, and
\item[(iii)] $A_i \times A_{d-i} \to A_d \overset{deg}\simeq k$ is a non-degenerate pairing.
\end{enumerate}
Let $x_1, \ldots, x_n$ generate $A_1$, so that $A \simeq k[\underline x]/I$ for some ideal $I$. Then there exists $P\in k[t_1, \ldots, t_n]$ such that $$I = \{f\in k[\underline x] \ | \ f(\partial{t_1}, \ldots, \partial{t_n})\cdot P = 0\}.$$
Up to scaling by an element of $k$, this cogenerator is $\deg\big( (t_1x_1 + \cdots + t_nx_n)^d\big)$ (where we extend $\deg: A_d\to k$ to $A_d[t_1, \ldots, t_n] \to k[t_1, \ldots, t_n]$).
\end{prop}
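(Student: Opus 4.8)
The plan is to directly verify that $P := \deg\big((t_1 x_1 + \cdots + t_n x_n)^d\big)$, viewed as an element of $k[t_1,\dots,t_n]$ (homogeneous of degree $d$), has the property that $I = \{f \in k[\underline x] : f(\partial_{t_1},\dots,\partial_{t_n})\cdot P = 0\}$, and that any two such cogenerators differ by a scalar. I would first set up the pairing precisely: for a monomial $x_1^{a_1}\cdots x_n^{a_n}$ of degree $e$, the operator $\partial_{t_1}^{a_1}\cdots\partial_{t_n}^{a_n}$ applied to $P$ lands in $k[t]_{d-e}$ (or $0$ if $e>d$), and by the multinomial expansion $P = \sum_{|b|=d}\binom{d}{b}\deg(x^b)\, t^b$ the coefficient extraction shows that $x^a \cdot P$ (in the $\partial$-action) is, up to an explicit nonzero combinatorial factor, $\sum_{|c| = d-e}\binom{d-e}{c}\deg(x^{a+c})t^c$. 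Thus the map sending $g \in k[\underline x]_e$ to $g(\partial) \cdot P \in k[t]_{d-e}$ is, after identifying $k[t]_{d-e}$ with the dual of $k[\underline x]_{d-e}$ via the apolarity pairing, exactly the composite $A_e \to \operatorname{Hom}_k(A_{d-e}, A_d)\xrightarrow{\deg} \operatorname{Hom}_k(A_{d-e},k)$ induced by multiplication in $A$.

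With that dictionary in place, the key steps are: (1) show $I \subseteq \{f : f(\partial)\cdot P = 0\}$ — if $f \in I$ then $f$ is zero in $A$, so $f \cdot A_{d-e} = 0$ in $A_d$, hence the functional above vanishes, hence $f(\partial)\cdot P = 0$; this uses only that $P$ is built from $\deg$, which is well-defined on $A_d$. (2) Show the reverse inclusion, which is where hypotheses (ii) and (iii) — Poincaré duality — are essential: if $f(\partial)\cdot P = 0$ with $f$ homogeneous of degree $e$, then the functional "multiply by $f$ then apply $\deg$" vanishes on $A_{d-e}$; by non-degeneracy of the pairing $A_e \times A_{d-e}\to A_d \cong k$ this forces $f = 0$ in $A_e$, i.e. $f \in I$. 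For general (inhomogeneous) $f$ one decomposes into homogeneous pieces and notes the $\partial$-action respects the grading (a piece of degree $e$ sends $P$ into degree $d-e$), so the pieces cannot cancel. (3) Uniqueness up to scalar: if $P'$ is another cogenerator, then $P'$ is killed by $I_{\geq 1}$ under the $\partial$-action restricted appropriately; since $\dim_k(0 :_E I)_d = \dim_k \operatorname{Hom}_k(A_0, A_d) = 1$ by the Gorenstein property, the degree-$d$ part of $(0:_E I)$ is one-dimensional, and a cogenerator must have a nonzero degree-$d$ component (else it would be annihilated by too small an ideal), so $P$ and $P'$ agree up to scalar in top degree and hence everywhere.

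I expect the main obstacle to be bookkeeping the apolarity/Matlis-duality identification cleanly — i.e. making precise that "the $\partial_t$-action of $k[\underline x]$ on $k[\underline t]$ realizes the injective hull $E(k)$ and that $(0:_E I)$ is exactly the cyclic $k[\underline x]$-module generated by $P$" — rather than any deep step. One must be slightly careful that $A$ is generated in degree one (hypothesis (i)) so that $k[\underline x]\twoheadrightarrow A$ and the induced apolarity pairing is perfect in each degree via Poincaré duality; and one should record that $\operatorname{char} k = 0$ (or at least large) is used so that the multinomial coefficients $\binom{d}{b}$ and $\binom{d-e}{c}$ are invertible, which is what makes the differential operators detect all of $A$. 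Since the excerpt explicitly cites \cite[Theorem 13.4.7]{CLS11} for this statement, the proof here can be a short self-contained recollection along these lines, or simply a pointer to that reference together with the identification of $P$ with $\deg\big((\sum t_i x_i)^d\big)$ via the multinomial computation in step (1)–(2).
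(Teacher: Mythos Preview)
The paper does not supply its own proof of this proposition: it is stated as a citation of \cite[Theorem 13.4.7]{CLS11}, with the surrounding text on Matlis duality serving only as context. So there is nothing in the paper to compare against beyond the bare reference.

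Your self-contained argument is correct and is exactly the standard apolarity/Macaulay inverse system verification one would expect. The identification you set up---that $g(\partial)\cdot P$, read off via the multinomial expansion of $P$, computes the functional $h \mapsto \deg(g\cdot h)$ on $A_{d-e}$---is the heart of the matter, and your use of non-degeneracy (hypothesis (iii)) for the reverse inclusion is precisely where Poincar\'e duality enters. Your remark about needing $\operatorname{char}\,k = 0$ (so the multinomial coefficients are units and the apolarity pairing is perfect degree-by-degree) is apt; the paper itself flags this assumption just before stating the proposition. The uniqueness step could be tightened slightly: rather than arguing about ``too small an ideal,'' it is cleaner to say that $(0:_E I)$ is graded with top nonzero piece in degree $d$, that piece is one-dimensional (dual to $A_0 = k$), and any homogeneous generator of a cyclic graded module lies in the top degree---hence any two cogenerators are scalar multiples. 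But this is cosmetic. Your final sentence already anticipates the situation accurately: the paper is content to cite \cite{CLS11}, and what you have written is a sound recollection of why that citation delivers the claim.
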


\begin{rem}
In toric geometry, the volume of an ample divisor is realized as a volume of a rational convex polytope; for details see \cite[\S9,\S13]{CLS11}.  Moreover, recently this phenomenon of realizing the volume of a divisor as a volume of convex body was extended to arbitrary smooth complete varieties where the Newton-Okounkov bodies take the place of the rational convex polytopes.  See \cite{LM09} for a more geometric perspective with applications to big divisors and N\'eron-Severi groups, and \cite{KK12} for an approach using semigroups and with a view towards Alexandrov-Fenchel inequality and generalized Kushnirenko-Bernstein theorem.  For an application with more representation theoretic flavor, see \cite{Kav11} on volume polynomials and cohomology rings of spherical varieties.
\end{rem}

\section{The volume polynomial of a matroid}

As the Chow ring $A^\bullet(M)$ of a matroid $M$ satisfies Poincar\'e duality, that is, the conditions of Proposition \ref{cogen}, it is an Artinian Gorenstein algebra with a cogenerator, well-defined up to scaling by a nonzero element of $\RR$.

\begin{defn}\label{volpoldefn}
Let $M$ be a matroid of rank $r = d+1$.  The \textbf{volume polynomial} $VP_M \in \RR[t_F : F\in \overline{\mathscr L_M}]$ is the cogenerator of $A^\bullet(M)$, where $VP_M$ is normalized so that the coefficient of any monomial $t_{F_1}t_{F_2}\cdots t_{F_d}$ corresponding to a maximal chain of flats in $\mathscr L_M$ is $d!$.  Equivalently, via Proposition \ref{cogen} the volume polynomial is
$$VP_M = \deg_M \Big(\sum_{F\in \overline{\mathscr L_M}} x_F t_F\Big)^d$$
where $\deg_M:A^d(M)\to \RR$ is extended to $A^d[t_F \ | \ F\in \overline{\mathscr L_M}]\to \RR[t_F \ | \ F\in \overline{\mathscr L_M}]$.
\end{defn}

The coefficient of $t_{F_1}^{d_1}\cdots t_{F_k}^{d_k}$ for $d_1 + \cdots +d_k = d = \operatorname{rk} M -1$ in the volume polynomial $VP_M$ is ${d \choose d_1, \ldots, d_k}\deg_M(x_{F_1}^{d_1}\cdots x_{F_k}^{d_k})$.  Thus, the knowing the volume polynomial amounts to knowing all the intersection numbers $\deg_M(x_{F_1}^{d_1}\cdots x_{F_k}^{d_k})$.  The quadratic relations $\mathcal I_M$ for $A^\bullet(M)$ imply that if $F_1, \ldots, F_k$ don't form a chain then $x_{F_1}^{d_1} \cdots x_{F_k}^{d_k} =0$, and so it suffices to consider the case when $F_1 \subsetneq \cdots \subsetneq F_k$ is a chain in $\overline{\mathscr L_M}$.  The main theorem in this section is the combinatorial formula for all the intersection numbers.

\begin{thm}\label{main} Let $M$ be a matroid of rank $r = d+1$ on a ground set $E$.  Let $\emptyset = F_0 \subsetneq F_1 \subsetneq \cdots \subsetneq F_k \subsetneq F_{k+1} = E$ be a chain of flats in $\mathscr L_M$ with ranks $r_i := \operatorname{rk} F_i$, and let $d_1, \ldots, d_k$ be positive integers such that $\sum_i d_i = d$.  Denote by $\widetilde d_i := \sum_{j=1}^i d_j$.  Then
$$\deg(x_{F_1}^{d_1}\cdots x_{F_k}^{d_k}) = (-1)^{d-k} \prod_{i = 1}^{k}{d_i -1 \choose \widetilde d_i - r_i}\mu^{\widetilde d_i - r_i}(M|F_{i+1}/F_i)$$
where $\mu^i(M')$ denotes the $i$-th unsigned coefficient of the reduced characteristic polynomial $\overline{\chi}_{M'}(t) = \mu^0(M') t^{\operatorname{rk} M'-1} - \mu^1(M') t^{\operatorname{rk}M'-2} + \cdots + (-1)^{\operatorname{rk}M' -1}\mu^{\operatorname{rk}M'-1}(M')$ of a matroid $M'$.
\end{thm}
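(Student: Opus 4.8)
The plan is to induct on the rank $d+1$ of $M$, using the structure of the Chow ring to reduce the computation of $\deg(x_{F_1}^{d_1}\cdots x_{F_k}^{d_k})$ to intersection numbers on smaller matroids, namely restrictions and contractions along the flats in the chain. The starting point is the well-known presentation of the Chow ring relative to a fixed flat: for the top flat $F_k$ in the chain, one has a pullback-pushforward type decomposition. Concretely, I would first isolate the variable $x_{F_1}$ corresponding to the smallest flat $F_1$, and use the relation in $A^\bullet(M)$ expressing $x_{F_1}$ (via the linear forms $\mathcal J_M$) so that multiplication by $x_{F_1}$ can be understood through the product $A^\bullet(M|F_1)\otimes A^\bullet(M/F_1)$; this is the matroid analogue of the fact that the exceptional divisor of a blow-up along $L_{F_1}$ has normal bundle whose Chow ring splits as a product of projective bundle classes over $L_{F_1}$. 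The key algebraic input is therefore a compatible description of how a power $x_F^{d}$ acts, i.e.\ the ``self-intersection formula'' for $x_F$ expressed in terms of $A^\bullet(M|F)$ and $A^\bullet(M/F)$, together with the known degree-one computations $\deg_M(x_F^{d}) $ when the chain has length one.

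The base case $k=1$ reduces to computing $\deg(x_{F_1}^{d})$ for a single flat $F_1$ of rank $r_1$, with $\widetilde d_1 = d$. Here I expect to invoke the known formula — essentially from \cite{AHK18} or \cite{FY04} — that $\deg_M(x_F^{d}) = (-1)^{d-1}\mu^{d-r_1+?}$ type expression; more precisely, writing $F = F_1$, the self-intersection $x_F^{d}$ vanishes unless the exponent is compatible with the codimension of $L_F$ in both $\PP V^\ast$ and its strict transform, and the nonzero value is controlled by $\overline\chi_{M|F}$ and $\overline\chi_{M/F}$. The combinatorial identity to check in the base case is that $\binom{d-1}{d-r_1}\mu^{d-r_1}(M/F_1)$ (with the $M|F_{2}/F_1 = M/F_1$ convention since $F_2 = E$) matches this value up to the sign $(-1)^{d-1}$. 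I would verify this by comparing with the generating-function description of $\mu^i$ via the Möbius function / characteristic polynomial and the known formula for degrees of powers of a single $x_F$.

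For the inductive step, given the chain $F_1 \subsetneq \cdots \subsetneq F_k$, I would peel off $F_1$: write $\deg_M(x_{F_1}^{d_1}\cdots x_{F_k}^{d_k})$ as a sum, indexed by how the exponent $d_1$ distributes between the "$M|F_1$ part" and the "$M/F_1$ part," of products of a degree on $A^\bullet(M|F_1)$ and a degree on $A^\bullet(M/F_1)$; on the contraction side the flats $F_2/F_1 \subsetneq \cdots \subsetneq F_k/F_1$ form a chain of the right ranks $r_i - r_1$, and on the restriction side there are no flats of the chain left (all of $F_2,\dots,F_k$ contain $F_1$ only in the contraction), so that part contributes a pure power $\deg_{M|F_1}(x_{\mathrm{pt}}^{\text{something}})$, i.e.\ a single $\mu$-factor. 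Matching this against the claimed product formula then amounts to a Vandermonde-type binomial identity $\sum_j \binom{d_1-1}{j}(\cdots) = \binom{d_1-1}{\widetilde d_1 - r_1}(\cdots)$ together with the inductive hypothesis applied to the contraction $M/F_1$ with chain $(F_i/F_1)$, ranks $(r_i - r_1)$, and exponents $(d_2,\dots,d_k)$ — noting that $\widetilde d_i - r_i$ is unchanged by the shift since both $\widetilde d_i$ and $r_i$ are measured relative to $F_1$ in the contracted picture only after subtracting $d_1$ and $r_1$ respectively.

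The main obstacle I anticipate is getting the recursive splitting of $A^\bullet(M)$ exactly right at the level of the degree map — i.e.\ proving the precise statement that, for a flat $F$, pushing forward along the "inclusion of the exceptional boundary" realizes $\deg_M$ as $\deg_{M|F}\otimes\deg_{M/F}$ composed with multiplication by the appropriate power of $x_F$, with the correct signs coming from the excess normal bundle (these signs are the source of the $(-1)^{d-k}$). This is where the reduced characteristic polynomial enters: the self-intersection contributions of $x_F$ are governed by Chern classes of the (twisted) normal bundle, whose top classes unwind combinatorially to coefficients $\mu^i$. Once that splitting lemma is in hand — which I would state and prove separately, presumably by combining the projective-bundle formula with the known computation of $\deg_M$ in \cite{AHK18} and the degree formula for a single flat — the rest is bookkeeping with binomial coefficients and an induction on $d$.
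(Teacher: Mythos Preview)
Your approach is genuinely different from the paper's, and as written it has a gap where the induction fails to close.

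The paper does not induct on rank or use a tensor decomposition $A^\bullet(M|F)\otimes A^\bullet(M/F)$. Instead it works entirely inside $A^\bullet(M)$: it chooses explicit linear relations in $\mathcal J_M$ (via carefully selected $m\in N^\vee$) to rewrite $x_{F_i}$ as a signed sum of $x_G$ with $G$ in the open intervals $(F_{i-1},F_i)$ or $(F_i,F_{i+1})$, and iterates this ``toppling'' until the monomial is expanded into square-free terms supported on maximal chains. Each such term has $\deg_M$ equal to $1$, so the problem becomes summing the rational coefficients picked up along the way. A separate lemma (via Weisner's theorem and the cover-partition axiom) identifies those telescoping weighted sums over chains with the coefficients $\mu^i$ of the reduced characteristic polynomial. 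No induction on $M$ is needed; the characteristic polynomial enters through M\"obius-function combinatorics rather than through a normal-bundle or excess-intersection computation.

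The gap in your plan is the following. When you peel off $x_{F_1}$ using the Gysin-type splitting at $F_1$, the restriction of $x_{F_1}$ to the exceptional boundary is not a variable $x_G$ but a class of the form $-\alpha_{M|F_1}\otimes 1 - 1\otimes \beta_{M/F_1}$ (for suitable ``hyperplane'' and ``reciprocal hyperplane'' classes $\alpha,\beta$). After expanding $(x_{F_1})^{d_1-1}$ binomially and using $\deg_{M|F_1}(\alpha^{r_1-1})=1$ to kill all but one term on the restriction side, the contraction side is left with
\[
\deg_{M/F_1}\bigl(\beta^{\,d_1-r_1}\, x_{F_2/F_1}^{d_2}\cdots x_{F_k/F_1}^{d_k}\bigr),
\]
which is \emph{not} of the form covered by your inductive hypothesis: it contains the extra factor $\beta^{d_1-r_1}$, and $\beta$ is not one of the $x_{F_i/F_1}$. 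You cannot simply apply the formula for the chain $(F_i/F_1)_{i\ge 2}$ with exponents $(d_2,\dots,d_k)$. This is precisely where the $\mu^{\widetilde d_1-r_1}(M|F_2/F_1)$ factor should emerge, but your sketch offers no mechanism for extracting it: your ``Vandermonde-type identity'' and ``the rest is bookkeeping'' hide the real content. The same issue recurs if you peel at $F_k$ instead --- the leftover $\alpha$-power on the $M|F_k$ side obstructs the induction there. A workable version of your strategy would need a strengthened inductive statement computing $\deg_N(\alpha^a\beta^b\, x_{G_1}^{e_1}\cdots x_{G_\ell}^{e_\ell})$ for arbitrary $a,b\ge 0$, which is substantially more than what you are trying to prove; the case $\ell=0$ of that stronger statement is essentially \cite[Proposition~9.5]{AHK18}, but the general case is not available off the shelf.

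Separately, your base case $k=1$ is not free: the value of $\deg_M(x_F^d)$ is not recorded in \cite{FY04} or \cite{AHK18} in the form you need, and deriving it already requires most of the machinery the full theorem does.
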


As an immediate corollary, we obtain a combinatorial formula for the volume polynomial.

\begin{cor}\label{volpol} Let the notations be as above.  The coefficient of $t_{F_1}^{d_1}\cdots t_{F_k}^{d_k}$ in $VP_M(\underline t)$ is
$$(-1)^{d-k}{d \choose d_1, \ldots, d_k} \prod_{i = 1}^{k} {d_i - 1 \choose \widetilde d_i -r_i} \mu^{\widetilde d_i - r_i}(M|F_{i+1}/F_i).$$
\end{cor}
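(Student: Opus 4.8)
The plan is to compute the intersection numbers $\deg(x_{F_1}^{d_1}\cdots x_{F_k}^{d_k})$ by induction on the number $k$ of flats in the chain, peeling off one variable at a time and reducing to matroid minors. First I would recall (or establish) two structural facts about the Chow ring. The first is the product formula relating the Chow ring of $M$ along a flat $F$ to those of the restriction and contraction: for a flat $F$ of rank $c$, there is a map $A^\bullet(M|F) \otimes A^\bullet(M/F) \to A^\bullet(M)$, and more precisely $x_F \cdot A^\bullet(M)$ is governed by $A^\bullet(M|F) \otimes A^\bullet(M/F)$; this is the matroid analogue of the fact that the exceptional divisor $\widetilde{L_F}$ of the blow-up is a projective bundle whose Chow ring factors accordingly (cf. \cite{FY04}, \cite{AHK18}). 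The second is the single-variable base case: $\deg(x_F^{d})$ for a single flat $F$, which should equal (up to sign) the appropriate unsigned coefficient $\mu^{d - r+1}(\cdot)$ of a reduced characteristic polynomial. This base case is essentially the computation, present in \cite{AHK18}, that relates powers of a divisor class $x_F$ to the reduced characteristic polynomial via the relation $\sum_{G} x_G = $ (pullback of hyperplane class) together with the Möbius-function expansion of $\overline{\chi}_M$.

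The key steps, in order, would be: (1) Fix the top flat $F_k$ of rank $r_k$. Using the quadratic relations, $x_{F_1}^{d_1}\cdots x_{F_k}^{d_k}$ is supported on the sub-chain $F_1 \subsetneq \cdots \subsetneq F_k$, which lives in $[\emptyset, F_k] \simeq \mathscr L_{M|F_k}$, while the remaining "room" above $F_k$ is controlled by $M/F_k \cong M|F_{k+1}/F_k$ (taking $F_{k+1} = E$). (2) Apply the product/projective-bundle decomposition at $F_k$: writing $y$ for the hyperplane-type class pulled back from $A^\bullet(M/F_k)$, one gets an expansion of $x_{F_k}^{d_k}$ in the fiber direction, and $\deg$ factors as a product of a degree computation in $A^\bullet(M|F_k)$ (handling $x_{F_1}^{d_1}\cdots x_{F_{k-1}}^{d_{k-1}} x_{F_k}^{?}$ on the shorter chain) times a degree computation in $A^\bullet(M/F_k)$ of a power of $y$, which by the base case produces the factor ${d_k - 1 \choose \widetilde d_k - r_k}\mu^{\widetilde d_k - r_k}(M/F_k)$ together with a sign $(-1)^{d_k - 1 - (\widetilde d_k - r_k)}$ or similar. (3) Recurse on the chain $F_1 \subsetneq \cdots \subsetneq F_{k-1}$ inside $M|F_k$; here one must check that the rank offsets $\widetilde d_i - r_i$ and the minors $M|F_{i+1}/F_i$ appearing in the inductive formula are exactly those predicted, using $\mathscr L_{(M|F_k)|F_{i+1}/F_i} \simeq (M|F_{i+1})/F_i$. (4) Collect the signs: one checks $\sum_{i=1}^k \big((d_i - 1) - (\widetilde d_i - r_i)\big) \equiv d - k \pmod 2$, matching the claimed $(-1)^{d-k}$. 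The Corollary is then immediate from the Definition \ref{volpoldefn} expansion of $VP_M = \deg_M(\sum x_F t_F)^d$ via the multinomial theorem, contributing the extra factor ${d \choose d_1, \ldots, d_k}$.

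The main obstacle I anticipate is step (2): making the projective-bundle/product decomposition of $A^\bullet(M)$ along $F_k$ precise enough to extract the exact combinatorial factor — in particular identifying which auxiliary class ($\sum_{G \supseteq F_k} x_G$, or a difference of such sums) plays the role of the relative hyperplane class, and controlling how the power $x_{F_k}^{d_k}$ splits between a "$-1$-twist" contributing the binomial ${d_k - 1 \choose \widetilde d_k - r_k}$ and the Möbius term $\mu^{\widetilde d_k - r_k}(M|F_{k+1}/F_k)$. This is where the reduced (as opposed to ordinary) characteristic polynomial enters, and getting the normalization and the sign bookkeeping exactly right — rather than off by a shift in the index or an overall sign — will require care. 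A secondary, more bookkeeping-heavy obstacle is verifying that the minors $M|F_{i+1}/F_i$ are stable under passing to $M|F_k$ in the induction, i.e. that $(M|F_k)|F_{i+1}/F_i = M|F_{i+1}/F_i$ for $i < k$, which follows from the interval description of lattices of flats but should be stated explicitly.
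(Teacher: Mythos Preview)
Your route is genuinely different from the paper's. The paper does \emph{not} induct on $k$ via a product/Gysin decomposition at a flat. Instead it expands $x_{F_1}^{d_1}\cdots x_{F_k}^{d_k}$ directly into square-free monomials using hand-picked linear relations in $\mathcal J_M$: for each $F_i$ with $d_i>1$ an explicit lattice point $m(\mathscr F,i)\in N^\vee$ is written down whose associated relation (Proposition~\ref{prop:topple}) trades one copy of $x_{F_i}$ for a sum over flats $G$ in the open intervals $(F_{i-1},F_i)$ and $(F_i,F_{i+1})$ with rational weights such as $-|G\setminus F_{i-1}|/|F_i\setminus F_{i-1}|$. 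Iterating these ``toppling'' moves, together with a vanishing criterion (Proposition~\ref{prop:support}) that prunes which $G$'s can survive, reduces everything to a weighted sum over maximal chains; the identification of that sum with the product of $\mu^j$'s is then done via Weisner's theorem on M\"obius functions (Proposition~\ref{prop:gamma}), not via any prior $\deg(\alpha^i\beta^j)$ identity. Your approach is more structural and, once the prerequisites from \cite{AHK18} are in hand, shorter; the paper's is longer but entirely self-contained.

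There is, however, a real gap in your step~(3): the induction does not close as stated. After localizing at $F_k$, the class $x_{F_k}$ restricts to $-\alpha_{M|F_k}-\beta_{M/F_k}$, and since $\widetilde d_{k-1}$ is in general strictly smaller than the top degree $r_k-1$ of $A^\bullet(M|F_k)$, the only surviving term in the binomial expansion of $x_{F_k}^{d_k-1}$ leaves you with
\[
\deg_{M|F_k}\bigl(\alpha^{\,r_k-1-\widetilde d_{k-1}}\; x_{F_1}^{d_1}\cdots x_{F_{k-1}}^{d_{k-1}}\bigr),
\]
not $\deg_{M|F_k}(x_{F_1}^{d_1}\cdots x_{F_{k-1}}^{d_{k-1}})$. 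The fix is to strengthen the induction hypothesis to quantities of the form $\deg_M(\alpha^e\,x_{F_1}^{d_1}\cdots x_{F_k}^{d_k})$ with $e+\sum d_i=d$: since the pullback of $\alpha_M$ to the star of $F_k$ is $1\otimes\alpha_{M/F_k}$, the extra $\alpha^e$ lands entirely on the $M/F_k$ side, where $\deg_{M/F_k}(\alpha^e\beta^{\widetilde d_k-r_k})=\mu^{\widetilde d_k-r_k}(M/F_k)$ by the identity you cite from \cite{AHK18}, and the $M|F_k$ side is again of the same strengthened form. With that modification (and the sign simply being $\prod_i(-1)^{d_i-1}=(-1)^{d-k}$, no $\widetilde d_i-r_i$ correction needed), your outline goes through.
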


\begin{rem}[$\overline{\mathcal M_{0,n}}$] When $M$ has a realization $\mathscr R(M)$, the volume polynomial $VP_M$ agrees with the classical volume polynomial of the wonderful compactification $X_{\mathscr R(M)}$.  In particular, when $M = M(K_{n-1})$ the matroid of the complete graph on $n-1$ vertices, we obtain the Deligne-Mumford space $\overline{\mathcal M_{0,n}}$ of rational curves with $n$ marked points (\cite[Theorem 6.4.12]{MS15}).  The numerical cones of $\overline{\mathcal M_{0,n}}$ are known to be complicated, as $\overline{\mathcal M_{0,n}}$ is not a Mori dream space in general (\cite{GK16}).  Nevertheless, our combinatorial formula for the volume polynomial allows for computation of the volume of any divisor in its ample cone.
\end{rem}

\begin{rem}[Computation] 
Computing $VP_M$ via Proposition \ref{cogen} alone, or more generally computing the intersection numbers via Gr\"obner bases quickly becomes infeasible as the matroid becomes larger.  For example, $M(K_6)$ (the matroid of the complete graph on 6 vertices) has 203 flats, so that the Chow ring $A^\bullet(M)$ has 201 variables.
\end{rem}

The rest of this section is dedicated to proving Theorem \ref{main}.  For the remainder of the section, we fix the following notation.
\begin{itemize}
\item $M$ is a matroid of rank $d = r+1$ on a ground set $E$ with $\Sigma_M\subset N_\RR = (\ZZ^E/\ZZ\mathbf1)_\RR$.
\item For a chain of flats $\mathscr F: F_1 \subsetneq \cdots \subsetneq F_k$ in $\overline{\mathscr L_M}$, we always set $F_0 = \emptyset$ and $F_{k+1} = E$.
\end{itemize}

For a chain $\mathscr F: F_1 \subsetneq \cdots \subsetneq F_k$ in $\overline{\mathscr L_M}$ and $(d_1, \ldots, d_k) \in \ZZ^k_{>0}$ with $\sum_{i=1}^k d_i = d$, our goal is to expand $x_{F_1}^{d_1} \cdots x_{F_k}^{d_k} \in A^d(M)$ into a sum of square-free monomials.  We do this broadly in three steps:
\begin{enumerate}
\item Whenever $d_i > 1$ for some $1\leq i \leq k$, Proposition \ref{prop:topple} provides a way to expand $x_{F_1}^{d_1} \cdots x_{F_k}^{d_k}$ as a sum of $x_Gx_{F_1}^{d_1} \cdots x_{F_i}^{d_i-1} \cdots x_{F_k}^{d_k}$ where $G \in (F_{i-1},F_i)$ or $G\in (F_i, F_{i+1})$, and if no such $G\in \overline{\mathscr L_M}$ exists then $x_{F_1}^{d_1}\cdots x_{F_k}^{d_k} = 0$.
\item Then, Proposition \ref{prop:support} informs us which monomials that show up as we iteratively apply Proposition \ref{prop:topple} evaluate to zero under $\deg_M$. We use this to obtain a well-controlled process for expanding out a monomial $x_{F_1}^{d_1}\cdots x_{F_k}^{d_k}$ into a sum of square-free monomials.
\item Lastly, in the process of the expansion, the monomials that appear pick up various coefficients, and Proposition \ref{prop:gamma} relates these coefficients to reduced characteristic polynomials of minors of $M$.
\end{enumerate}

\medskip
We prepare Proposition \ref{prop:topple} by a noting a choice of $m\in N^\vee = \mathbf 1^\perp$ that define a linear relation $\sum_{F\in \overline{\mathscr L_M}} \langle m, u_F\rangle x_F \in \mathcal J_M$ well-suited for expanding out $x_{F_1}^{d_1} \cdots x_{F_k}^{d_k}$.

\begin{lem}\label{lem:dualpoint}
For a chain $\mathscr F: \emptyset = F_0 \subsetneq F_1 \subsetneq \cdots \subsetneq F_k \subsetneq F_{k+1} = E$ in $\mathscr L_M$, define
$$m(\mathscr F, i) := - |F_{i+1}\setminus F_i|e_{F_i \setminus F_{i-1}}+|F_i\setminus F_{i-1}|e_{F_{i+1}\setminus F_i} \in N^\vee$$
for each $i = 1, \ldots, k$.  Then, for $F_j \in \mathscr F$, we have
$$
\langle m(\mathscr F, i), u_{F_j}\rangle =
\begin{cases}
0 & \textnormal{if $j\neq i$}\\
-|F_{i+1}\setminus F_i|\cdot|F_i\setminus F_{i-1}|& \textnormal{if $j = i$}
\end{cases}.
$$
\end{lem}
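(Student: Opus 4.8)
The plan is to unwind all the definitions and reduce the claim to an elementary computation with the standard dot product on $\ZZ^E$. Recall that for a subset $S\subseteq E$ we write $e_S := \sum_{i\in S} e_i$, so that $\langle e_S, e_T\rangle = |S\cap T|$ for any $S,T\subseteq E$, and that $u_{F_j}\in N$ is by definition the image of $e_{F_j}$ under the quotient $\ZZ^E \twoheadrightarrow N = \ZZ^E/\ZZ\mathbf1$. Since $N^\vee$ is identified with $\mathbf1^\perp\subset\ZZ^E$ via $\langle\cdot,\cdot\rangle$, for any $m\in\mathbf1^\perp$ the pairing $\langle m, u_{F_j}\rangle$ is computed as the ordinary dot product $\langle m, e_{F_j}\rangle$, which is well-defined precisely because $m$ is orthogonal to $\mathbf1$. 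So first I would check that $m(\mathscr F, i)$ indeed lies in $N^\vee$: since $\langle e_S,\mathbf1\rangle = |S|$, we get $\langle m(\mathscr F,i),\mathbf1\rangle = -|F_{i+1}\setminus F_i|\,|F_i\setminus F_{i-1}| + |F_i\setminus F_{i-1}|\,|F_{i+1}\setminus F_i| = 0$.

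Next, by bilinearity and $\langle e_S, e_T\rangle = |S\cap T|$, expand $\langle m(\mathscr F, i), u_{F_j}\rangle = -|F_{i+1}\setminus F_i|\,|F_j\cap(F_i\setminus F_{i-1})| + |F_i\setminus F_{i-1}|\,|F_j\cap(F_{i+1}\setminus F_i)|$. The key step is then to evaluate the two intersection cardinalities using the nesting of the chain, in three cases. If $j < i$, then $F_j\subseteq F_{i-1}$, which is disjoint from both $F_i\setminus F_{i-1}$ and $F_{i+1}\setminus F_i$, so both intersections are empty and the pairing is $0$. If $j > i$, then $F_j\supseteq F_{i+1}$, so it contains both $F_i\setminus F_{i-1}$ and $F_{i+1}\setminus F_i$ entirely; the two terms become $-|F_{i+1}\setminus F_i|\,|F_i\setminus F_{i-1}| + |F_i\setminus F_{i-1}|\,|F_{i+1}\setminus F_i| = 0$. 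Finally, if $j = i$, then $F_i\cap(F_i\setminus F_{i-1}) = F_i\setminus F_{i-1}$ while $F_i\cap(F_{i+1}\setminus F_i) = \emptyset$, so the pairing equals $-|F_{i+1}\setminus F_i|\,|F_i\setminus F_{i-1}|$, as claimed.

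There is essentially no obstacle here; the only point requiring care is the bookkeeping of which blocks $F_i\setminus F_{i-1}$ and $F_{i+1}\setminus F_i$ are contained in, or disjoint from, $F_j$, all of which follows immediately from $\mathscr F$ being a chain. The purpose of the lemma is that $m(\mathscr F,i)$ yields a linear relation in $\mathcal J_M$ which, when used to rewrite $x_{F_i}$, leaves the other variables $x_{F_j}$ with $j\neq i$ appearing in the chain untouched --- this is exactly the feature that makes it the right tool for the inductive expansion carried out in Proposition~\ref{prop:topple}.
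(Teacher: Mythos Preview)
Your proof is correct and follows essentially the same approach as the paper: expand the pairing via $\langle e_S, e_{F_j}\rangle = |S\cap F_j|$ and treat the cases $j<i$, $j>i$, $j=i$ using the nesting of the chain. You are slightly more careful (explicitly verifying $m(\mathscr F,i)\in\mathbf1^\perp$ and spelling out each intersection), but the argument is the same.
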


\begin{proof}
Noting that $\langle e_S, u_{S'}\rangle = |S\cap S'|$ for any $S,S'\subset E$, we have that 
\begin{itemize}
\item[] if $j \leq i-1$, then $\langle m(\mathscr F, i), u_{F_j}\rangle = -0 + 0 = 0$, and
\item[] if $j\geq i+1$, then $\langle m(\mathscr F, i), u_{F_j}\rangle  = - |F_{i+1}\setminus F_i|\cdot|F_i \setminus F_{i-1}|  +  |F_i\setminus F_{i-1}|\cdot|F_{i+1}\setminus F_i| = 0$.
\end{itemize} 
   Lastly, when $i = j$, we get $-|F_i\setminus F_{i-1}|\cdot|F_{i+1}\setminus F_i|$.
\end{proof}

As a consequence, we obtain our first key proposition.

\begin{prop}\label{prop:topple}
Let $\mathscr F: F_1\subsetneq \cdots \subsetneq F_k$ be a chain in $\overline{\mathscr L_M}$, and suppose we have $(d_1, \ldots, d_k)\in \ZZ^k_{>0}$ with $\sum_{i=1}^k d_i \leq d$ and $d_i > 1$ for some $i\in \{1, \ldots, k\}$.  Then we have
\begin{multline}\label{eq:topple}
x_{F_1}^{d_1} \cdots x_{F_i}^{d_i} \cdots x_{F_k}^{d_k} = \displaystyle \sum_{G\in (F_{i-1},F_i)} -\frac{|G\setminus F_{i-1}|}{|F_i\setminus F_{i-1}|}x_G\cdot x_{F_1}^{d_1} \cdots x_{F_i}^{d_i -1} \cdots x_{F_k}^{d_k}\\
  + \sum_{G\in (F_i,F_{i+1})} -\frac{|F_{i+1}\setminus G|}{|F_{i+1}\setminus F_i|} x_G\cdot x_{F_1}^{d_1} \cdots x_{F_i}^{d_i -1} \cdots x_{F_k}^{d_k}
\end{multline}
as elements in $A^\bullet(M)$.
\end{prop}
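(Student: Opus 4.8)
The plan is to use the linear relation supplied by Lemma~\ref{lem:dualpoint} applied to the chain $\mathscr F$ and the index $i$ for which $d_i>1$. By the definition of $\mathcal J_M$, we have
$$
\sum_{F\in \overline{\mathscr L_M}} \langle m(\mathscr F,i), u_F\rangle \, x_F = 0 \quad\text{in } A^\bullet(M).
$$
First I would isolate the $x_{F_i}$ term using the computation in Lemma~\ref{lem:dualpoint}: among the flats $F_j$ lying on the chain $\mathscr F$, only $j=i$ contributes, with coefficient $-|F_{i+1}\setminus F_i|\cdot|F_i\setminus F_{i-1}|$. Dividing by this nonzero integer, I can solve for $x_{F_i}$ as a linear combination of the $x_G$ with $G\in \overline{\mathscr L_M}\setminus\{F_i\}$. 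Multiplying both sides by $x_{F_1}^{d_1}\cdots x_{F_i}^{d_i-1}\cdots x_{F_k}^{d_k}$ then expresses the left-hand side of \eqref{eq:topple} as a sum over all such $G$.

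The key step is then to discard the terms for which $G$ is incomparable to some $F_j$ in the chain: by the Stanley--Reisner-type quadratic relations $\mathcal I_M$, any monomial $x_G x_{F_1}^{d_1}\cdots x_{F_i}^{d_i-1}\cdots x_{F_k}^{d_k}$ with $G$ incomparable to some $F_j$ (for $j$ with a surviving variable) vanishes in $A^\bullet(M)$. Since $d_i-1$ may be zero, I should be slightly careful: the variable $x_{F_i}$ might no longer appear, but $x_{F_{i-1}}$ and $x_{F_{i+1}}$ still do (as $F_{i-1},F_{i+1}$ are only the endpoints $\emptyset, E$ in degenerate cases, where $x_\emptyset, x_E$ are simply not variables and the corresponding sum over $(F_{i-1},F_i)$ or $(F_i,F_{i+1})$ should be read with the convention that $x_{F_0}=x_{F_{k+1}}=0$). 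Thus the only $G$ that can survive are those comparable to every flat in $\mathscr F$ and distinct from all of them, i.e.\ $G$ lying strictly between two consecutive flats: either $G\in(F_{i-1},F_i)$ or $G\in(F_i,F_{i+1})$. (A flat $G$ strictly between $F_{j-1}$ and $F_j$ for some $j\ne i, i+1$ would be incomparable to... no---wait, it is comparable to all of $\mathscr F$; but such $G$ does not arise here because $\langle m(\mathscr F,i), u_G\rangle$ need not vanish for those $G$. So I do need to check that those terms also die.) The resolution: if $G$ lies strictly between $F_{j-1}$ and $F_j$ with $j\notin\{i,i+1\}$, then $G$ is comparable to the whole chain, so the Stanley--Reisner relation does not kill the monomial directly; instead I claim $\langle m(\mathscr F,i), u_G\rangle = 0$ for such $G$. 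Indeed $m(\mathscr F,i)$ is supported on $F_{i+1}\setminus F_{i-1}$, and for $j<i$ we have $G\subset F_{i-1}$ while for $j>i+1$ we have $F_{i+1}\subset G$; in the first case $\langle m(\mathscr F,i), u_G\rangle=0$ exactly as in the $j\le i-1$ case of Lemma~\ref{lem:dualpoint}, and in the second case it is $0$ exactly as in the $j\ge i+1$ case. So these terms never appear.

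It remains to compute the coefficients for $G\in(F_{i-1},F_i)$ and $G\in(F_i,F_{i+1})$. For $G\in(F_{i-1},F_i)$, using $\langle e_S,u_{S'}\rangle=|S\cap S'|$ and the support of $m(\mathscr F,i)$, I get $\langle m(\mathscr F,i),u_G\rangle = -|F_{i+1}\setminus F_i|\cdot|G\setminus F_{i-1}|$ (the second summand of $m(\mathscr F,i)$ contributes nothing since $G\subseteq F_i$ is disjoint from $F_{i+1}\setminus F_i$). Dividing by $-|F_{i+1}\setminus F_i|\cdot|F_i\setminus F_{i-1}|$ gives the stated coefficient $-\,|G\setminus F_{i-1}|/|F_i\setminus F_{i-1}|$. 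For $G\in(F_i,F_{i+1})$, I get $\langle m(\mathscr F,i),u_G\rangle = -|F_{i+1}\setminus F_i|\cdot|F_i\setminus F_{i-1}| + |F_i\setminus F_{i-1}|\cdot|G\cap(F_{i+1}\setminus F_i)| = |F_i\setminus F_{i-1}|\cdot(|G\setminus F_i| - |F_{i+1}\setminus F_i|) = -|F_i\setminus F_{i-1}|\cdot|F_{i+1}\setminus G|$, and dividing by the same normalization yields $-\,|F_{i+1}\setminus G|/|F_{i+1}\setminus F_i|$, as claimed.

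The main obstacle is the bookkeeping around which monomials vanish: one must simultaneously track the Stanley--Reisner vanishing (for $G$ incomparable to the chain) and the vanishing of the dual-pairing coefficient $\langle m(\mathscr F,i),u_G\rangle$ (for $G$ comparable to the chain but lying in a "wrong" interval), and handle the boundary conventions $x_{F_0}=x_{F_{k+1}}=0$ gracefully. Once that case analysis is organized, the coefficient computation is a short application of $\langle e_S,u_{S'}\rangle=|S\cap S'|$.
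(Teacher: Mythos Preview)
Your approach is essentially the same as the paper's: use the linear relation from $m(\mathscr F,i)$ to solve for $x_{F_i}$, multiply through, kill terms with $G$ incomparable to the chain via $\mathcal I_M$, and then check that the pairing $\langle m(\mathscr F,i),u_G\rangle$ vanishes for $G$ in the ``wrong'' intervals while computing it explicitly for $G\in(F_{i-1},F_i)$ and $G\in(F_i,F_{i+1})$. One small slip: since the hypothesis is $d_i>1$, the exponent $d_i-1$ is always at least $1$, so $x_{F_i}$ does remain in the monomial and your detour worrying about its possible absence is unnecessary (the paper simply notes that all exponents $d_1,\ldots,d_{i-1},d_i-1,d_{i+1},\ldots,d_k$ are positive).
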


\begin{proof}
By Lemma \ref{lem:dualpoint}, the lattice point $m(\mathscr F,i)\in N^\vee$ defines the following element of $\mathcal J_M$:
$$\sum_{G\in \overline{\mathcal L_M}} \langle m(\mathscr F,i), u_G\rangle x_G =  (-|F_i\setminus F_{i-1}|\cdot|F_{i+1}\setminus F_i|)x_{F_i} + \sum_{G\notin \mathscr F} \langle m(\mathscr F,i),u_G\rangle x_G.$$
Dividing by $|F_i\setminus F_{i-1}|\cdot|F_{i+1}\setminus F_i|$, we obtain the following linear relation for elements in $A^\bullet(M)$.
\begin{align*}
x_{F_i} &= \displaystyle \sum_{G\notin \mathscr F} \frac{\langle m(\mathscr F, i), u_G\rangle}{|F_i\setminus F_{i-1}|\cdot|F_{i+1}\setminus F_i|} x_G\\[5mm]
&= \displaystyle \sum_{G\notin \mathscr F} \left( -\frac{|G\cap (F_{i}\setminus F_{i-1})|}{|F_{i}\setminus F_{i-1}|} + \frac{|G\cap (F_{i+1}\setminus F_i)|}{|F_{i+1}\setminus F_i|} \right) x_G,
\end{align*}
and hence, we have
\begin{equation}\label{eq:eq1}
x_{F_1}^{d_1} \cdots x_{F_k}^{d_k} = \sum_{G\notin \mathscr F} \left( -\frac{|G\cap (F_{i}\setminus F_{i-1})|}{|F_{i}\setminus F_{i-1}|} + \frac{|G\cap (F_{i+1}\setminus F_i)|}{|F_{i+1}\setminus F_i|} \right) x_G\cdot x_{F_1}^{d_1} \cdots x_{F_i}^{d_i -1} \cdots x_{F_k}^{d_k}.
\end{equation}
Note that $d_1, \ldots, d_{i-1}, d_i - 1, d_{i+1}, \ldots, d_k$ are all positive, and recall that $x_Fx_{F'} = 0$ in $A^\bullet(M)$ if $F,F'$ are incomparable.  Thus, in the summation \eqref{eq:eq1} above, we only need keep the $G\notin \mathscr F$ such that $F_j \subsetneq G \subsetneq F_{j+1}$ for some $j = 0, \ldots, k$.  Such $G$ is either in $(\emptyset, F_{i-1})$, $(F_{i-1}, F_i)$, $(F_i, F_{i+1})$, or $(F_{i+1}, E)$.  For each case we compute that
$$
\left( -\frac{|G\cap (F_{i}\setminus F_{i-1})|}{|F_{i}\setminus F_{i-1}|} + \frac{|G\cap (F_{i+1}\setminus F_i)|}{|F_{i+1}\setminus F_i|} \right) =
\begin{cases}
-0 + 0 = 0 & \textnormal{if $G\in (\emptyset, F_{i-1})$}\\[2mm]
-\frac{|G\setminus F_{i-1}|}{|F_i\setminus F_{i-1}|} + 0 = -\frac{|G\setminus F_{i-1}|}{|F_i\setminus F_{i-1}|} & \textnormal{if $G\in (F_{i-1}, F_i)$}\\[2mm]
-1 + \frac{|G\setminus F_i|}{|F_{i+1}\setminus F_i|} = -\frac{|F_{i+1}\setminus G|}{|F_{i+1}\setminus F_i|} & \textnormal{if $G\in (F_i, F_{i+1})$}\\[2mm]
-1 + 1 = 0 & \textnormal{if $G\in (F_{i+1}, E)$}
\end{cases}.
$$
Thus, we obtain the desired equality
\begin{multline*}
x_{F_1}^{d_1} \cdots x_{F_i}^{d_i} \cdots x_{F_k}^{d_k} = \displaystyle \sum_{G\in (F_{i-1},F_i)} -\frac{|G\setminus F_{i-1}|}{|F_i\setminus F_{i-1}|}x_G\cdot x_{F_1}^{d_1} \cdots x_{F_i}^{d_i -1} \cdots x_{F_k}^{d_k}\\
  + \sum_{G\in (F_i,F_{i+1})} -\frac{|F_{i+1}\setminus G|}{|F_{i+1}\setminus F_i|} x_G\cdot x_{F_1}^{d_1} \cdots x_{F_i}^{d_i -1} \cdots x_{F_k}^{d_k}.
\end{multline*}
\end{proof}

Proposition \ref{prop:topple} motivates the following operation on $\RR[x_F \ | \ F\in \overline{\mathscr L_M}]$.

\begin{defn}
For $F\in \overline{\mathscr L_M}$, the \textbf{toppling operation} $\mathcal T_F: \RR[x_F \ | \ F\in \overline{\mathscr L_M}] \to \RR[x_F \ | \ F \in \overline{\mathscr L_M}]$ \textbf{associated to $F$} is defined by its values on the monomials as follows.

\smallskip
$\mathcal T_F(x_{F_1}^{d_1}\cdots x_{F_{k}}^{d_{k}}) :=$
$$
\begin{cases}
0  \quad & \textnormal{if $F_1, \ldots, F_k$ do not form a chain}\\[5mm]
 \begin{split}
\textstyle\underset{G\in (F_{i-1},F_i)}\sum -\frac{|G\setminus F_{i-1}|}{|F_i\setminus F_{i-1}|}x_G\cdot x_{F_1}^{d_1} \cdots x_{F_i}^{d_i -1} \cdots x_{F_k}^{d_k} \\
\textstyle+ \underset{G\in (F_i,F_{i+1})}\sum  -\frac{|F_{i+1}\setminus G|}{|F_{i+1}\setminus F_i|} x_G\cdot x_{F_1}^{d_1} \cdots x_{F_i}^{d_i -1} \cdots x_{F_k}^{d_k}
\end{split} & \begin{array}{l}\textnormal{if $F_1\subsetneq \cdots \subsetneq F_k$ (relabeling if necessary)}\\
\textnormal{and $F_i = F$ with $d_i > 1$}\end{array}\\[9mm]
x_{F_1}^{d_1}\cdots x_{F_{k}}^{d_{k}} & \textnormal{otherwise}.
\end{cases}
$$
\end{defn}

Proposition \ref{prop:topple} implies that $\mathcal T_F(f) = f$  as elements in $A^\bullet(M)$ for any $f\in \RR[x_F \ | \ F\in \overline{\mathscr L_M}]$, and moreover, it implies that the monomials appearing in
$$(\mathcal T_{F_k}^{d_k - 1} \circ \cdots \circ \mathcal T_{F_1}^{d_1 - 1})(x_{F_1}^{d_1} \cdots x_{F_k}^{d_k})$$
with nonzero coefficients are square-free.  Our goal now is to compute $(\mathcal T_{F_k}^{d_k - 1} \circ \cdots \circ\mathcal T_{F_1}^{d_1 - 1})(x_{F_1}^{d_1} \cdots x_{F_k}^{d_k})$ for a chain $\mathscr F: F_1 \subsetneq \cdots \subsetneq F_k$ in $\overline{\mathscr L_M}$ and $(d_1, \ldots, d_k) \in \ZZ^k_{>0}$ satisfying $d_1 + \cdots + d_k = d$.  This expansion by iterated toppling operations is rather unwieldy; the following second key proposition provides the tool to make computation of $(\mathcal T_{F_k}^{d_k - 1} \circ \cdots\circ \mathcal T_{F_1}^{d_1 - 1})(x_{F_1}^{d_1} \cdots x_{F_k}^{d_k})$ more manageable.

\begin{prop}\label{prop:support}
Let $F_1 \subsetneq \cdots \subsetneq F_k$ be a chain in $\overline{\mathscr L_M}$, and let $(d_1, \ldots, d_k)\in \ZZ^k_{>0}$ satisfy $d_1 + \cdots + d_k = d$.  
\begin{enumerate}
\item[(a)]\label{prop:a} Suppose $d_1 = d_2 = \cdots d_{i-1} = 1$ for some $i\in \{1, \ldots, k\}$.  If the monomial
$$x_{F_1}^{d_1} \cdots x_{F_k}^{d_k} = x_{F_1} x_{F_2}\cdots x_{F_{i-1}} x_{F_i}^{d_i} \cdots x_{F_k}^{d_k}$$
is nonzero as an element of $A^d(M)$, then $\operatorname{rk}_{M}(F_j) = j$ for all $1 \leq j \leq i-1$.
\item[(b)]\label{prop:b} Furthermore, suppose $d_{i+1} = 1$.  If the monomial
$$x_{F_1}^{d_1} \cdots x_{F_k}^{d_k} = x_{F_1} x_{F_2}\cdots x_{F_{i-1}} x_{F_i}^{d_i} x_{F_{i+1}} x_{F_{i+2}}^{d_{i+2}} \cdots x_{F_k}^{d_k}$$
is nonzero as an element of $A^d(M)$, then $d_i = \operatorname{rk}_M(F_{i+1}) - \operatorname{rk}_M(F_{i-1}) -1$.
\end{enumerate}
\end{prop}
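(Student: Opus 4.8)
The plan is to exploit the structure of the Chow ring directly, using the fact that $A^\bullet(M)$ is generated in degree $1$ with top degree $d$ and, crucially, that nonvanishing of a monomial $x_{F_1}^{d_1}\cdots x_{F_k}^{d_k}$ in $A^d(M)$ forces the existence of a maximal chain of flats "compatible" with the data. For part (a), I would argue by downward comparison of ranks along the chain. Since the monomial is supported on the chain $F_1\subsetneq\cdots\subsetneq F_k$, and since applying the toppling operation $\mathcal T_{F_i}$ (Proposition \ref{prop:topple}) only replaces a power $x_{F_i}^{d_i}$ by terms $x_G x_{F_i}^{d_i-1}$ with $G$ strictly between consecutive flats of the chain, iterated toppling expresses $x_{F_1}^{d_1}\cdots x_{F_k}^{d_k}$ as a sum of square-free monomials $x_{G_1}\cdots x_{G_d}$ each supported on a chain that \emph{refines} $F_1\subsetneq\cdots\subsetneq F_k$ and that has length exactly $d$. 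For such a square-free monomial to be nonzero under $\deg_M$ it must correspond to a maximal chain in $\overline{\mathscr L_M}$, hence its flats have ranks $1,2,\ldots,d$ in order. Now since in the segment $x_{F_1}\cdots x_{F_{i-1}}$ the exponents are already $1$, the toppling operations never insert flats \emph{below} $F_{i-1}$ except possibly between consecutive $F_j$'s; counting how many new flats can be inserted below $F_{i-1}$ in any refinement forces $\operatorname{rk}_M(F_j)\ge j$ (there must be room for $j$ flats weakly below $F_j$), while $\operatorname{rk}_M(F_j)\le j$ would follow from the fact that $x_{F_1}\cdots x_{F_j}$ involves $j$ flats and any maximal refinement of $\emptyset\subsetneq F_1\subsetneq\cdots\subsetneq F_j$ inside $[\emptyset, F_j]$ that passes through all of them must already be saturated below $F_j$ once no toppling below $F_{i-1}$ is available. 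I would make this precise by the following cleaner route: the monomial $x_{F_1}\cdots x_{F_{i-1}}\cdot(\text{stuff})$ nonzero implies, via Poincar\'e duality, that there is $\eta\in A^{d-(i-1)}(M)$ with $\deg_M(x_{F_1}\cdots x_{F_{i-1}}\eta)\ne 0$; expanding $\eta$ into square-free monomials on chains and pairing, every surviving term gives a maximal chain through $F_1,\ldots,F_{i-1}$, and since no flat strictly below $F_1$, nor strictly between $F_j$ and $F_{j+1}$ for $j<i-1$, is forced to appear... actually the interval $[\emptyset,F_{i-1}]$ must be filled out to a maximal chain of $[\emptyset,F_{i-1}]\cong\mathscr L_{M|F_{i-1}}$, but the only flats of that interval appearing with positive exponent in our monomial are $F_1,\ldots,F_{i-1}$ themselves, so they must already be a maximal chain of $[\emptyset,F_{i-1}]$, giving $\operatorname{rk}_M(F_j)=j$.

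For part (b), I would reduce to a local statement about the interval $[F_{i-1},F_{i+1}]\cong \mathscr L_{M|F_{i+1}/F_{i-1}}$. Granting (a), we know $\operatorname{rk}_M(F_{i-1})=i-1$ and (applied to the reversed/contracted situation, or by the same argument run from the top) $\operatorname{rk}_M(F_{i+1})$ is pinned down relative to the tail; in any case the key is that in the square-free expansion of $x_{F_1}^{d_1}\cdots x_{F_k}^{d_k}$, all the flats coming from toppling the block $x_{F_i}^{d_i}$ land strictly between $F_{i-1}$ and $F_{i+1}$ — this is exactly what Proposition \ref{prop:topple} guarantees, since toppling $x_{F_i}$ only introduces $G\in(F_{i-1},F_i)\cup(F_i,F_{i+1})$, and subsequent topplings of those stay within the same open interval because the neighbours of any flat in that expansion remain $F_{i-1}$ and $F_{i+1}$ (the exponents $d_{i-1}=d_{i+1}=1$ mean $F_{i-1},F_{i+1}$ are never toppled). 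Therefore a nonzero square-free term restricted to the open interval $(F_{i-1},F_{i+1})$ is a chain of length exactly $d_i-1$ supported there (the original $x_{F_i}$ contributes the starting monomial of degree $d_i$ which topples $d_i-1$ times, producing a chain with $d_i$ flats strictly between $F_{i-1}$ and $F_{i+1}$, wait — $d_i$ flats from $x_{F_i}^{d_i}\mapsto$ square-free means $d_i$ distinct flats in the closed interval, one of which may be $F_i$ itself). Combined with the fact that such a term is part of a maximal chain, the portion inside $[F_{i-1},F_{i+1}]$ must be a maximal chain of that interval, which has $\operatorname{rk}_M(F_{i+1})-\operatorname{rk}_M(F_{i-1})-1$ flats strictly between $F_{i-1}$ and $F_{i+1}$; matching counts gives $d_i=\operatorname{rk}_M(F_{i+1})-\operatorname{rk}_M(F_{i-1})-1=\operatorname{rk}_M(F_{i+1})-(i-1)-1$, which after using $\operatorname{rk}_M(F_{i-1})=i-1$ from (a) is the claimed identity.

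The main obstacle I anticipate is bookkeeping the square-free expansion precisely enough: I must verify that when I iterate the toppling operations, the flats produced while toppling the $i$-th block genuinely stay within $(F_{i-1},F_{i+1})$ and do not interact with flats produced by toppling other blocks in a way that changes the local count — i.e., that the "refinement of a chain" picture is exactly right and that a square-free monomial surviving $\deg_M$ really does index a \emph{maximal} chain of $\overline{\mathscr L_M}$, so that its intersections with each interval $[F_{j},F_{j+1}]$ are themselves maximal. This is morally the statement that $\deg_M$ is supported on maximal chains and that toppling refines chains, both of which follow from the defining relations $\mathcal I_M$ and the normalization of $\deg_M$; making the induction on $\sum_i(d_i-1)$ clean, with the inductive hypothesis phrased for all chains simultaneously, is where care is needed. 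An alternative, and perhaps slicker, approach avoiding the combinatorics of toppling entirely is to use Poincar\'e duality plus the known description of $\deg_M$ on products $x_{F_1}\cdots x_{F_d}$: pair the given monomial against an arbitrary square-free monomial completing it to degree $d$, observe nonvanishing of the pairing forces the union of supports to be a chain of length $d$ (hence maximal), and read off the rank constraints interval by interval; I would present whichever of the two is shorter, but expect the toppling-based argument to be the one the paper wants since Propositions \ref{prop:topple} and \ref{prop:support} are set up in tandem.
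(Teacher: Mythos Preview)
Your plan is correct and is essentially the paper's argument: expand $x_{F_1}^{d_1}\cdots x_{F_k}^{d_k}$ by iterated toppling into square-free monomials, observe that each surviving term is supported on a length-$d$ chain in $\overline{\mathscr L_M}$ (hence automatically a maximal chain, since ranks run from $1$ to $d$), and then read off the rank constraints on $F_1,\ldots,F_{i-1}$ and on the interval $(F_{i-1},F_{i+1})$ from the fact that toppling at $F_i,\ldots,F_k$ never inserts flats below $F_{i-1}$, and toppling at $F_i$ alone populates $(F_{i-1},F_{i+1})$ with exactly $d_i$ flats. The only cosmetic difference is that the paper treats (b) by splitting into the cases $d_i<\operatorname{rk}(F_{i+1})-\operatorname{rk}(F_{i-1})-1$ (not enough flats to saturate the interval, so the resulting chains fail maximality there) and $d_i>\operatorname{rk}(F_{i+1})-\operatorname{rk}(F_{i-1})-1$ (toppling eventually runs out of room and returns $0$), whereas your pigeonhole count handles both at once.

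Two small cautions. First, your Poincar\'e-duality ``cleaner route'' does not work as sketched: pairing $x_{F_1}\cdots x_{F_{i-1}}$ against an arbitrary $\eta$ allows the square-free pieces of $\eta$ to contribute flats \emph{below} $F_{i-1}$, so the position of $F_{i-1}$ in the resulting maximal chain need not be $i-1$. The toppling argument avoids this precisely because you control which intervals the new flats land in; you should drop the duality detour. Second, tighten the bookkeeping at one point: from $x_{F_i}^{d_i}$ you get $d_i$ flats in the \emph{open} interval $(F_{i-1},F_{i+1})$ (one of them is $F_i$), and since $d_{i-1}=d_{i+1}=1$ the neighbours $F_{i-1},F_{i+1}$ are never toppled, so nothing from the other blocks enters this interval; that is the inductive invariant you should state explicitly.
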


\begin{proof}
We prove the statement (a) first.  If $\operatorname{rk}_M(F_j) > j$ for some $j \in \{1, \ldots, i-1\}$, then there must exist $\ell\in \{1, \ldots, i-1\}$ such that $\operatorname{rk}_M(F_{\ell}) - \operatorname{rk}_M(F_{\ell-1}) \geq 2$.  Now, by construction every monomial appearing in
$$(\mathcal T_{F_k}^{d_k - 1} \circ \cdots\circ \mathcal T_{F_i}^{d_i - 1})(x_{F_1} x_{F_2}\cdots x_{F_{i-1}} x_{F_i}^{d_i} \cdots x_{F_k}^{d_k})$$
is square-free and does not contain a variable $x_G$ with $F_{\ell-1} \subsetneq G \subsetneq F_{\ell}$.  As each of these square-free monomials has degree $d$, that its support does not form a maximal chain in $\overline{\mathscr L_M}$ implies that it is zero.

We now use the statement (a) to prove the the statement (b).  If $d_i <  \operatorname{rk}_M(F_{i+1}) - \operatorname{rk}_M(F_{i-1}) -1$ then no monomials appearing in $\mathcal  T_{F_i}^{d_i - 1}(x_{F_1} x_{F_2}\cdots x_{F_{i-1}} x_{F_i}^{d_i} x_{F_{i+1}} x_{F_{i+2}}^{d_{i+2}} \cdots x_{F_k}^{d_k})$ can satisfy the condition given in statement (a).  If $d_i > \operatorname{rk}_M(F_{i+1}) - \operatorname{rk}_M(F_{i-1}) -1 $, then every monomial appearing in
$$\mathcal T_{F_i}^{\operatorname{rk}_M(F_{i+1}) - \operatorname{rk}_M(F_{i-1})-2}( x_{F_1} x_{F_2}\cdots x_{F_{i-1}} x_{F_i}^{d_i} x_{F_{i+1}} x_{F_{i+2}}^{d_{i+2}} \cdots x_{F_k}^{d_k})$$
is of the form $x_{F_1}\cdots x_{F_{i-1}}x_{G_1} \cdots x_{G_j} x_{F_i}^{d_i'}x_{G_{j+1}} \cdots x_{G_{r'}} x_{F_{i+1}} x_{F_{i+2}}^{d_{i+2}} \cdots x_{F_k}^{d_k}$ where $G_{j+1}$ covers $F_i$ and $F_i$ covers $G_j$.  As $d_i' = d_i - (\operatorname{rk}_M(F_{i+1}) - \operatorname{rk}_M(F_{i-1})-2) > 1$, we can apply $\mathcal T_{F_i}$ to these monomials again which makes them zero. 
\end{proof}

Combined with Proposition \ref{prop:topple}, the above Proposition \ref{prop:support} motivates the following modification of the toppling operation that is much more manageable for computation.

\begin{defn}
For $F\in \overline{\mathscr L_M}$, the \textbf{tight toppling operation} on degree $d$ homogeneous elements $\overline{\mathcal T_F}: \RR[x_F \ | \ F\in \overline{\mathscr L_M}]_d \to \RR[x_F \ | \ F \in \overline{\mathscr L_M}]_d$ \textbf{associated to $F$} is defined by its values on the monomials of degree $d$ as follows.  For a monomial $x_{F_1}^{d_1}\cdots x_{F_{k}}^{d_{k}}$, denote $r_j := \operatorname{rk}_M(F_j)$ for $j\in \{1, \ldots, k\}$.  We define

\smallskip
$\mathcal T_F(x_{F_1}^{d_1}\cdots x_{F_{k}}^{d_{k}}) :=$
$$
\begin{cases}
0  \quad & \textnormal{if $F_1, \ldots, F_k$ do not form a chain}\\[5mm]
\begin{split}
\textstyle\underset{\tiny\begin{array}{c} G\in (F_{i-1},F_i) \\ \operatorname{rk}_M(G) = r_{i-1} + 1\end{array}}\sum -\frac{|G\setminus F_{i-1}|}{|F_i\setminus F_{i-1}|}x_G\cdot x_{F_1}^{d_1} \cdots x_{F_i}^{d_i -1} \cdots x_{F_k}^{d_k} \\
\textstyle+ \underset{\tiny\begin{array}{c}G\in (F_i,F_{i+1}) \\ \operatorname{rk}_M(G) = d_i + r_i\end{array}}\sum  -\frac{|F_{i+1}\setminus G|}{|F_{i+1}\setminus F_i|} x_G\cdot x_{F_1}^{d_1} \cdots x_{F_i}^{d_i -1} \cdots x_{F_k}^{d_k}
\end{split} & \begin{array}{l}\textnormal{if $F_1\subsetneq \cdots \subsetneq F_k$ (relabeling if necessary)}\\
\textnormal{and $F_i = F$ with $d_i >1$} \\ \textnormal{and $d_j = 1$, $r_j = j$ for all $j\leq i-1$}\end{array}\\[11mm]
x_{F_1}^{d_1}\cdots x_{F_{k}}^{d_{k}} & \textnormal{otherwise}
\end{cases}.
$$
\end{defn}

\begin{lem}
For $f\in \RR[x_F \ | \ F\in \overline{\mathscr L_M}]$ homogeneous of degree $d$, we have $\overline{\mathcal T_F}(f) = f$ as elements in $A^d(M)$.
\end{lem}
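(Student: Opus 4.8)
The plan is to reduce to the fact already recorded after Proposition~\ref{prop:topple}, that the ordinary toppling operation is the identity on the Chow ring: $\mathcal T_F(g)=g$ in $A^\bullet(M)$ for every $g\in\RR[x_F : F\in\overline{\mathscr L_M}]$. Since both $\mathcal T_F$ and $\overline{\mathcal T_F}$ are defined $\RR$-linearly on monomials, it suffices to fix a degree-$d$ monomial $\mu=x_{F_1}^{d_1}\cdots x_{F_k}^{d_k}$ and show $\overline{\mathcal T_F}(\mu)=\mu$ in $A^d(M)$. The key point is simply that $\overline{\mathcal T_F}(\mu)$ is obtained from $\mathcal T_F(\mu)$ by deleting precisely those summands that Proposition~\ref{prop:support} forces to vanish in $A^d(M)$.

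Two of the three branches of the definition are immediate. If $F_1,\dots,F_k$ are not totally ordered, then $x_{F_a}x_{F_b}\mid\mu$ for some incomparable $F_a,F_b$, so $\mu=0$ in $A^\bullet(M)$ and $\overline{\mathcal T_F}(\mu)=0$ as well. If $\mu$ falls into the ``otherwise'' branch --- $F\notin\{F_1,\dots,F_k\}$; or $F=F_i$ with $d_i=1$; or $F=F_i$ with $d_i>1$ but $(d_j,\operatorname{rk}_M F_j)\neq(1,j)$ for some $j\le i-1$ --- then $\overline{\mathcal T_F}(\mu)=\mu$ verbatim, and there is nothing to prove. (In the last of these, $\mathcal T_F$ does expand $\mu$ nontrivially, but this is irrelevant, since we only compare $\overline{\mathcal T_F}(\mu)$ with $\mu$ itself.)

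It remains to treat the branch $F_1\subsetneq\cdots\subsetneq F_k$, $F=F_i$, $d_i>1$, and $d_j=1$, $r_j:=\operatorname{rk}_M F_j=j$ for all $j\le i-1$. Write $\mu':=x_{F_1}^{d_1}\cdots x_{F_i}^{d_i-1}\cdots x_{F_k}^{d_k}$. Then $\mathcal T_F(\mu)=\sum_{G\in(F_{i-1},F_i)}c_G\,x_G\mu' + \sum_{G\in(F_i,F_{i+1})}c_G\,x_G\mu'$ with the coefficients of Proposition~\ref{prop:topple}, while $\overline{\mathcal T_F}(\mu)$ keeps only the summands with $\operatorname{rk}_M G=r_{i-1}+1$ in the first sum and $\operatorname{rk}_M G=d_i+r_{i-1}$ in the second; so I must show $x_G\mu'=0$ in $A^d(M)$ for every deleted $G$. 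If $G\in(F_{i-1},F_i)$ is deleted, so $\operatorname{rk}_M G\ge r_{i-1}+2$, then $x_G\mu'$ is supported on the chain $F_1\subsetneq\cdots\subsetneq F_{i-1}\subsetneq G\subsetneq F_i\subsetneq\cdots\subsetneq F_k$ whose first $i$ variables occur to the first power, so Proposition~\ref{prop:support}(a), applied with $i+1$ in place of its $i$, forces $\operatorname{rk}_M G=i=r_{i-1}+1$ whenever $x_G\mu'\neq0$, a contradiction; hence $x_G\mu'=0$. If $G\in(F_i,F_{i+1})$ is deleted, so $\operatorname{rk}_M G\neq d_i+r_{i-1}$, then $x_G\mu'$ is supported on $F_1\subsetneq\cdots\subsetneq F_{i-1}\subsetneq F_i\subsetneq G\subsetneq F_{i+1}\subsetneq\cdots\subsetneq F_k$ with $F_i$ to the power $d_i-1\ge1$, $G$ to the first power, and all variables before $F_i$ to the first power, so Proposition~\ref{prop:support}(b), applied with $F_i$ in the role of its repeated flat, forces $d_i-1=\operatorname{rk}_M G-r_{i-1}-1$, i.e.\ $\operatorname{rk}_M G=d_i+r_{i-1}$ whenever $x_G\mu'\neq0$, again a contradiction. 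Thus the deleted part of $\mathcal T_F(\mu)$ vanishes in $A^d(M)$, and $\overline{\mathcal T_F}(\mu)=\mathcal T_F(\mu)=\mu$ there.

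I expect the only genuine work to be in the last paragraph: relabelling the chain after a single toppling step so that parts (a) and (b) of Proposition~\ref{prop:support} apply with the correctly shifted index, and observing that the saturation hypothesis $r_j=j$ ($j\le i-1$) built into this branch of $\overline{\mathcal T_F}$ is exactly what makes the conclusions of Proposition~\ref{prop:support} incompatible with the rank conditions characterizing the deleted summands. Everything else is formal.
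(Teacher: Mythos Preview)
Your proof is correct and follows exactly the paper's own approach: reduce by linearity to a single degree-$d$ monomial, dispose of the two trivial branches, and in the nontrivial branch compare $\overline{\mathcal T_F}(\mu)$ with $\mathcal T_F(\mu)$, using Proposition~\ref{prop:support}(a) and (b) to show that the summands dropped by the tight operation already vanish in $A^d(M)$.

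One remark on the rank condition in the second sum. You write that $\overline{\mathcal T_F}$ retains those $G\in(F_i,F_{i+1})$ with $\operatorname{rk}_M G = d_i + r_{i-1}$, and your application of Proposition~\ref{prop:support}(b) (with $F_i$ in the central role, flanked by $F_{i-1}$ and $G$) correctly yields $d_i-1=\operatorname{rk}_M G - r_{i-1}-1$, i.e.\ $\operatorname{rk}_M G = d_i+r_{i-1}$. The paper's definition of $\overline{\mathcal T_F}$ and its proof of the lemma instead write $\operatorname{rk}_M G = d_i+r_i$; since the flat immediately preceding $F_i$ in the augmented chain is $F_{i-1}$ (rank $r_{i-1}$), your version is the one that actually follows from Proposition~\ref{prop:support}(b), and the $r_i$ in the paper appears to be a slip. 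This does not affect the logic of either argument.
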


\begin{proof}
The only nontrivial case is when $f$ has a monomial $x_{F_1}^{d_1} \cdots x_{F_k}^{d_k}$ satisfying $F_1 \subsetneq \cdots \subsetneq F_k$, $F_i = F$, $d_i > 1$, and $d_j = 1, r_j = j$ for all $j \leq i-1$.  In this case, apply $\mathcal T_{F_i}$ and consider the new variables $x_G$ that appear in the summations in \eqref{eq:topple}.  For $G\in (F_{i-1}, F_i)$ case we only need consider $G$ that cover $F_{i-1}$ by Proposition \ref{prop:support}(a), and for $G\in (F_i, F_{i+1})$ case we only need consider $G$ such that $\operatorname{rk}_M(G) - r_i - 1 = d_i -1$, i.e.\ $\operatorname{rk}_M(G) = d_i + r_i$, by Proposition \ref{prop:support}(b).
\end{proof}

We say that a monomial $x_{F_1}^{d_1} \cdots x_{F_k}^{d_k}$ is \textbf{initial (up to rank $i-1$)} if $\operatorname{rk}_M(F_j) = j$ for all $j\leq i-1$ where $i\in \{1, \ldots, k\}$ is the smallest such that $d_i > 1$ (and $i = k+1$ if $d_\ell = 1$ for all $\ell\in \{1, \ldots, k\}$).  Note that applying a tight toppling operation to an initial monomial of degree $d$ outputs a sum over initial monomials.  Thus, for a chain $\mathscr F: F_1 \subsetneq \cdots \subsetneq F_k$ in $\overline{\mathscr L_M}$ and $(d_1, \ldots, d_k) \in \ZZ^k_{>0}$ satisfying $d_1 + \cdots + d_k = d$, we can expand the monomial $x_{F_1}^{d_1} \cdots x_{F_k}^{d_k}$ into square-free monomials by
$$(\overline{\mathcal T_{F_k}}^{d_k - 1} \circ \cdots \circ \overline{\mathcal T_{F_1}}^{d_1 - 1})(x_{F_1}^{d_1} \cdots x_{F_k}^{d_k}).$$
We note an immediate consequence of this expansion process.

\begin{prop}
Let $\mathscr F: F_1 \subsetneq \cdots \subsetneq F_k$ be a chain in $\overline{\mathscr L_M}$, and $(d_1, \ldots, d_k) \in \ZZ^k_{>0}$ satisfying $d_1 + \cdots + d_k = d$.  Denote by $r_i := \operatorname{rk}_M(F_i)$ and $\widetilde d_i := \sum_{j=1}^i d_j$ (where $\widetilde d_0 := 0$).  We have
$$\deg_M(x_{F_1}^{d_1} \cdots x_{F_k}^{d_k}) \neq 0 \implies \widetilde d_{i-1} < r_i \leq \widetilde d_i \textnormal{ for all $i = 1, \ldots, k$}.$$
\end{prop}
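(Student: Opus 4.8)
The plan is to read the two inequalities off the square-free expansion $(\overline{\mathcal T_{F_k}}^{d_k-1}\circ\cdots\circ\overline{\mathcal T_{F_1}}^{d_1-1})(x_{F_1}^{d_1}\cdots x_{F_k}^{d_k})$ constructed just above, by bookkeeping where the tight toppling steps insert new flats. (If all $d_j=1$ the monomial is already square-free and nonvanishing forces its support to be a maximal chain, so $\operatorname{rk}_M(F_j)=j$ for all $j$ and the claim is immediate; so assume some $d_j>1$ and let $i$ be the least such index.) Since $\deg_M$ is $\RR$-linear, the hypothesis $\deg_M(x_{F_1}^{d_1}\cdots x_{F_k}^{d_k})\neq 0$ forces some square-free monomial $x_{G_1}\cdots x_{G_d}$ with $G_1\subsetneq\cdots\subsetneq G_d$ a maximal chain (so $\operatorname{rk}_M(G_\ell)=\ell$) to occur with nonzero coefficient in this expansion. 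I would first record: $\overline{\mathcal T_{F_j}}$ is applied exactly $d_j-1$ times and never alters the power of $x_{F_\ell}$ for $\ell\neq j$; and a nontrivial tight toppling step on a flat $F$ replaces $x_F^{a}$ by a sum of terms $x_G\cdot x_F^{a-1}$ over flats $G$ lying strictly between $F$ and one of its two neighbours in the current chain, so each such $G$ is a fresh flat (not equal to any $F_\ell$ and not already present). Consequently every $F_\ell$ survives in $x_{G_1}\cdots x_{G_d}$, necessarily as $G_{r_\ell}$, and the remaining $d-k$ of the $G_\ell$ are precisely the distinct flats produced by the $\sum_j (d_j-1)$ toppling steps.

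The heart of the argument is then a count of how many inserted flats can lie strictly below $F_i$, i.e.\ in positions $1,\dots,r_i-1$. Because $i$ is the least index with $d_i>1$, nonvanishing of $x_{F_1}^{d_1}\cdots x_{F_k}^{d_k}$ in $A^d(M)$ gives $\operatorname{rk}_M(F_j)=j$ for $j<i$ by Proposition \ref{prop:support}(a); this settles the asserted inequalities for all indices $<i$, and also makes the starting monomial ``initial'', so every intermediate tight toppling really does act through its defining formula. A toppling step on $F_\ell$ inserts a flat into the open interval between $F_\ell$ and its immediate predecessor (a ``down'' step) or immediate successor (an ``up'' step) in the current chain; chasing containments one checks that every step (up or down) on $F_1,\dots,F_{i-1}$, as well as every ``down'' step on $F_i$, inserts a flat strictly contained in $F_i$, while ``up'' steps on $F_i$ and all steps on $F_{i+1},\dots,F_k$ insert flats that are not. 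Hence the number of inserted flats among $G_1,\dots,G_{r_i-1}$ equals $\sum_{\ell=1}^{i-1}(d_\ell-1)+s$, where $s\in\{0,1,\dots,d_i-1\}$ is the number of ``down'' steps performed on $F_i$; counting all $r_i-1$ positions below $F_i$ gives $r_i-1=(i-1)+\sum_{\ell=1}^{i-1}(d_\ell-1)+s$, i.e.\ $r_i=\widetilde d_{i-1}+1+s$, so $\widetilde d_{i-1}<r_i\le \widetilde d_{i-1}+d_i=\widetilde d_i$ and the proof is complete.

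The step I expect to fight hardest with is the containment bookkeeping: I must be sure that after arbitrarily many earlier insertions have subdivided the chain, a toppling step on $F_\ell$ still lands a fresh flat in an interval adjacent to $F_\ell$, that this flat is strictly inside the \emph{original} $F_{\ell+1}$ (for an ``up'' step) or strictly contains the \emph{original} $F_{\ell-1}$ (for a ``down'' step), and that distinct toppling steps contribute distinct flats. I would prove this by an auxiliary induction on the number of steps taken, showing that at each stage the current monomial is square-free in its inserted variables and its support is a chain refining $F_1\subsetneq\cdots\subsetneq F_k$ with every inserted flat lying in the expected open interval $(F_{\ell-1},F_\ell)$ or $(F_\ell,F_{\ell+1})$. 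An alternative that sidesteps this entirely is an induction on the excess $d-k$: apply $\overline{\mathcal T_{F_i}}$ exactly once for the least $i$ with $d_i>1$, then use Proposition \ref{prop:support}(a) together with the inductive hypothesis applied to the resulting length-$(k+1)$ chains to rule out all ``up'' terms (an ``up'' term would force $\operatorname{rk}_M(F_i)\le i-1$, contradicting $F_{i-1}\subsetneq F_i$), and finally transport the inductive hypothesis through a surviving ``down'' term back to the original chain.
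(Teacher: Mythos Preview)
Your approach is the paper's approach: both read the inequalities off the tight-toppling expansion $(\overline{\mathcal T_{F_k}}^{d_k-1}\circ\cdots\circ\overline{\mathcal T_{F_1}}^{d_1-1})(x_{F_1}^{d_1}\cdots x_{F_k}^{d_k})$ by tracking where the inserted flats sit relative to each $F_j$. But as written your argument has a gap. You fix $i$ to be the \emph{least} index with $d_i>1$, dispose of indices $j<i$ via Proposition~\ref{prop:support}(a), establish $\widetilde d_{i-1}<r_i\le\widetilde d_i$ for that single $i$ by your count, and then declare the proof complete. The indices $j>i$ are never treated.

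The repair is already latent in what you wrote. Your count $r_i-1=(i-1)+\sum_{\ell=1}^{i-1}(d_\ell-1)+s$ is the correct formula for an \emph{arbitrary} index $j$ in place of $i$; indeed the sum $\sum_{\ell<i}(d_\ell-1)$ you included vanishes for your particular $i$, which suggests you really meant the general case. For arbitrary $j$: every toppling step on $F_\ell$ with $\ell<j$ (up or down) lands in $(\emptyset,F_\ell)\cup(F_\ell,F_{\ell+1})\subset(\emptyset,F_j)$, while up-steps on $F_j$ land in $(F_j,F_{j+1})$ and all steps on $F_\ell$ with $\ell>j$ land above $F_j$ (a down-step on $F_{\ell}$ inserts above the current predecessor, which contains $F_{\ell-1}\supseteq F_j$). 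Hence the flats below $F_j$ in a surviving maximal chain number $(j-1)+\sum_{\ell<j}(d_\ell-1)+s_j=\widetilde d_{j-1}+s_j$ with $0\le s_j\le d_j-1$, giving $\widetilde d_{j-1}<r_j\le\widetilde d_j$ for every $j$. This is precisely what the paper asserts in its two-sentence proof: after $(\overline{\mathcal T_{F_j}}^{d_j-1}\circ\cdots\circ\overline{\mathcal T_{F_1}}^{d_1-1})$ the surviving monomials are initial up to rank $\widetilde d_j$, and for such monomials to exist the tight-toppling construction forces $\widetilde d_{j-1}<r_j\le\widetilde d_j$.
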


\begin{proof}
For each $i\in \{1, \ldots, k\}$, the monomials with nonzero coefficients appearing in
$$(\overline{\mathcal T_{F_i}}^{d_i-1} \circ \cdots \circ \overline{\mathcal T_{F_1}}^{d_1 - 1})(x_{F_1}^{d_1} \cdots x_{F_k}^{d_k})$$
are initial up to rank at least $\widetilde d_j$.  By construction of the tight toppling operators, for these monomials to exist, it is necessary that $\widetilde d_{i-1} < r_i \leq \widetilde d_i$ for each $i \in\{ 1, \ldots, k \}$.
\end{proof}

We need one more proposition before we give a proof of Theorem \ref{main}.  It is the final ingredient that relates the coefficients of characteristic polynomials to the coefficients we obtain when expanding monomials by the tight toppling operations.  As we will be applying this to various matroid minors of the matroid $M$, the statements here are in terms of an arbitrary loopless matroid $M'$.

\begin{defn}
For $M'$ a loopless matroid on a ground set $E$ of rank $d+1$ and $-1\leq i \leq d+1$, define $\gamma(M',i)$ as follows.  For $i=-1$ and $i=d+1$, define $\gamma(M',-1) = -1$ and $\gamma(M', d+1) = 0$; for $0\leq i \leq d$, define
$$ \gamma(M', i) := \sum_{\mathscr G \in \mathscr L_{M'}^{\leq i}} (-1) \left(-\frac{|G_1\setminus G_0|}{|G_1|} \right)\left(-\frac{|G_2\setminus G_1|}{|G_2|} \right)\cdots \left(-\frac{|G_{i+1}\setminus G_i|}{|G_{i+1}|} \right)$$
where $\mathscr L_{M'}^{\leq i}$ consists of chains $\mathscr G: \emptyset = G_0 \subsetneq G_1 \subsetneq \cdots \subsetneq G_i\subsetneq G_{i+1} = E$ such that $\operatorname{rk} G_j= j$ for $j=0,\ldots, i$.
\end{defn}

\begin{prop} \label{prop:gamma} Let $M'$ be a loopless matroid of rank $r = d+1$ on a ground set $E$, and let $\overline \chi_{M'}(t) = \chi_{M'}(t)/(t-1)$ be the reduced characteristic polynomial of $M'$.
\begin{enumerate}
\item[(a)] We have $\displaystyle \sum_{\mathscr G\in \mathscr L_{M'}^{\leq d}} \frac{|G_1|}{|E|} \cdot \frac{|G_2\setminus G_1|}{|E\setminus G_1|} \cdots \frac{|G_d \setminus G_{d-1}|}{|E\setminus G_{d-1}|} = 1$, and\\[3mm]
\item[(b)] $\overline \chi_{M'}(t) = \sum_{i=0}^d \gamma(M',i)t^{d-i}$.
\end{enumerate}
\end{prop}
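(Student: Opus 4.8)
I would prove the two parts separately: (a) by reading the summand as a probability, and (b) by recognizing the weights in $\gamma$ as an invariant of a truncation of $M'$.

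\emph{Part (a).} The plan is to view $\prod_{j=1}^{d}\frac{|G_j\setminus G_{j-1}|}{|E\setminus G_{j-1}|}$ as the probability of a particular outcome of the following random process that builds a maximal flag of flats of $M'$: start with $G_0=\emptyset$; having produced the rank-$(j-1)$ flat $G_{j-1}\subsetneq E$, choose $e\in E\setminus G_{j-1}$ uniformly at random and set $G_j:=\operatorname{cl}_{M'}(G_{j-1}\cup\{e\})$, which is a flat covering $G_{j-1}$. The cover-partition axiom (F3) says the flats covering $G_{j-1}$ partition $E\setminus G_{j-1}$ into the blocks $G\setminus G_{j-1}$, so a prescribed cover $G_j$ is produced with conditional probability $|G_j\setminus G_{j-1}|/|E\setminus G_{j-1}|$. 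Since $\operatorname{rk} M'=d+1$, after $d$ steps one has a flat of rank $d$, i.e.\ (taking $G_{d+1}:=E$) a chain in $\mathscr L_{M'}^{\le d}$; each such chain $\mathscr G$ occurs with probability exactly its $\mathscr G$-summand, so summing the summands over all outcomes gives $1$.

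\emph{Part (b).} For a loopless matroid $M''$ of rank $m$ on ground set $E''$, put
\[
\alpha(M''):=\sum_{\emptyset=G_0\subsetneq G_1\subsetneq\cdots\subsetneq G_m=E''}\ \prod_{j=1}^{m}\frac{|G_j\setminus G_{j-1}|}{|G_j|},
\]
the sum over maximal flags of flats of $M''$. Unwinding the signs in the definition of $\gamma$ (the $j=1$ factor being $1$ since $G_0=\emptyset$) gives $\gamma(M',i)=(-1)^{i}\alpha\big(T_{i+1}(M')\big)$ for $0\le i\le d$, where $T_{i+1}(M')$ is the rank-$(i+1)$ truncation of $M'$: it is loopless, has the same ground set $E$ (hence the same cardinalities $|G_j|$), and its maximal flags of flats are precisely the chains $\emptyset=G_0\subsetneq\cdots\subsetneq G_i\subsetneq G_{i+1}=E$ constituting $\mathscr L_{M'}^{\le i}$. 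The key lemma is then $\alpha(M'')=(-1)^{m}\mu_{M''}(\hat 0,\hat 1)$. Granting it, $\gamma(M',i)=(-1)^{i}(-1)^{i+1}\mu_{T_{i+1}(M')}(\hat 0,\hat 1)=-\mu_{T_{i+1}(M')}(\hat 0,\hat 1)$; since the flats of $T_{i+1}(M')$ other than $\hat 1=E$ are exactly the flats of $M'$ of rank $\le i$ (with unchanged lower intervals, so unchanged Möbius values), the identity $\sum_x\mu_{T_{i+1}(M')}(\hat 0,x)=0$ gives $\gamma(M',i)=\sum_{\operatorname{rk}_{M'}F\le i}\mu_{M'}(\hat 0,F)$. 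Finally, writing $\chi_{M'}(t)=\sum_F\mu_{M'}(\hat 0,F)t^{\operatorname{rk} M'-\operatorname{rk} F}$ and comparing coefficients in $\chi_{M'}(t)=(t-1)\sum_{i=0}^{d}\gamma(M',i)t^{d-i}$ (telescoping the partial sums of the $\mu$'s) yields $\overline\chi_{M'}(t)=\sum_{i=0}^{d}\gamma(M',i)t^{d-i}$.

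It remains to prove the lemma, by induction on $m$ (the cases $m\le 1$ being immediate). For $m\ge 2$, group the maximal flags of flats of $M''$ by their top coatom $H=G_{m-1}$; the flags with $G_{m-1}=H$ are exactly the maximal flags of the restriction $M''|H$, and the weight of such a flag factors as $(\text{its weight in }M''|H)\cdot |E''\setminus H|/|E''|$. Using $\mathscr L_{M''|H}=[\hat 0,H]$ and the inductive hypothesis,
\[
\alpha(M'')=\frac{1}{|E''|}\sum_{H\ \mathrm{coatom}}\alpha(M''|H)\,|E''\setminus H|=\frac{(-1)^{m-1}}{|E''|}\sum_{H\ \mathrm{coatom}}\mu_{M''}(\hat 0,H)\,|E''\setminus H|.
\]
Writing $|E''\setminus H|=\sum_{e\in E''\setminus H}1$ and exchanging the order of summation reduces the claim to the identity $\sum_{H\ \mathrm{coatom},\ e\notin H}\mu_{M''}(\hat 0,H)=-\mu_{M''}(\hat 0,\hat 1)$ for every $e\in E''$, which is Weisner's theorem applied to the atom $\operatorname{cl}_{M''}(e)$ of the geometric lattice $\mathscr L_{M''}$ (the elements $x$ with $x\vee\operatorname{cl}_{M''}(e)=\hat 1$ are $\hat 1$ together with the coatoms not containing $e$). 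Substituting back gives $\alpha(M'')=(-1)^{m}\mu_{M''}(\hat 0,\hat 1)$, completing the induction. The main obstacle is precisely this lemma — forcing the natural coatom-recursion for $\alpha$ to match the recursion for the Möbius function is exactly what requires Weisner's theorem; the passage from $\gamma$ to $\overline\chi_{M'}$ through truncations, and the probabilistic argument for (a), are then routine.
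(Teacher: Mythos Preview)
Your proof is correct, and the core ingredients match the paper's: the cover--partition axiom for (a), and Weisner's theorem for (b). But the packaging differs in a way worth noting.

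For (a), your probabilistic reading is exactly the paper's induction on $d$ rephrased: the paper factors out the first step (sum over atoms $G_1$ of $|G_1|/|E|$) and applies the inductive hypothesis to $M'/G_1$, which is the same as saying the conditional probabilities at each step sum to $1$.

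For (b), both arguments reduce to the identity $\alpha(M'') = (-1)^{\operatorname{rk} M''}\mu_{M''}(\hat 0,\hat 1)$ via Weisner, and both prove it by the coatom recursion $|G|\mu(\hat 0,G) = -\sum_{F\lessdot G}|G\setminus F|\,\mu(\hat 0,F)$. The paper applies this to \emph{restrictions} $M'|G$ to get $\mu(\emptyset,G) = -\gamma(M'|G,\operatorname{rk} G -1)$, then uses a one-step recursion to reach $\gamma(M',i) = \sum_{\operatorname{rk} F = i}\frac{|E\setminus F|}{|E|}\mu(\emptyset,F)$ and verifies the telescoping against $\chi_{M'}$ by a further appeal to cover--partition. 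You instead apply the lemma to \emph{truncations} $T_{i+1}(M')$, which lets the M\"obius sum-to-zero identity on the truncated lattice immediately yield the partial-sum formula $\gamma(M',i) = \sum_{\operatorname{rk} F\le i}\mu_{M'}(\hat 0,F)$; the telescoping is then automatic. Your route is a bit cleaner in that it avoids the second use of cover--partition and produces the more transparent closed form for $\gamma(M',i)$; the paper's route stays closer to the inductive structure used elsewhere in the argument.
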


\begin{proof}
We prove the statement (a) first by induction on $d$.  For the base case $d=1$, as $M'$ is loopless, the statement follows from the cover-partition axiom (F3) in Definition \ref{defn:flats} applied to $\emptyset \in \mathscr L_{M'}$.  For the induction step, we note that
$$\sum_{\mathscr G\in \mathscr L_{M'}^{\leq d}} \frac{|G_1|}{|E|} \cdot \frac{|G_2\setminus G_1|}{|E\setminus G_1|} \cdots \frac{|G_d \setminus G_{d-1}|}{|E\setminus G_{d-1}|} = \sum_{G_1 \in \mathfrak A(M')} \frac{|G_1|}{|E|} \sum_{\mathscr G' \in \mathscr L^{\leq d-1}_{M'/G_1}} \frac{|G_2\setminus G_1|}{|E\setminus G_1|} \cdots \frac{|G_d \setminus G_{d-1}|}{|E\setminus G_{d-1}|}$$
where $\mathscr G'$ ranges over the maximal chain of nonempty proper flats in the contraction $M'/G_1$, which has rank one smaller than that of $M'$.  Thus, by induction hypothesis, we get
$$\sum_{G_1 \in \mathfrak A(M')} \frac{|G_1|}{|E|} \sum_{\mathscr G' \in \mathscr L^{\leq d-1}_{M'/G_1}} \frac{|G_2\setminus G_1|}{|E\setminus G_1|} \cdots \frac{|G_d \setminus G_{d-1}|}{|E\setminus G_{d-1}|} = \sum_{G_1 \in \mathfrak A(M')} \frac{|G_1|}{|E|} = 1$$
as desired.

For the proof of statement (b), we begin by recalling a useful theorem.  Denote by $\mu(\cdot, \cdot)$ the M\"obius function on the lattice $\mathscr L_{M'}$.  Weisner's theorem \cite[\S3.9]{Sta12} states that for $G\in \mathscr L_{M'}$ a flat and $a\in G$ we have
$$\mu(\emptyset, G) = -\sum_{a\notin F \lessdot G} \mu(\emptyset, F)$$
where $F\lessdot G$ means $G$ covers $F$ in the lattice $\mathscr L_{M'}$.  Thus, we have that
$$|G|\mu(\emptyset, G) = -\sum_{a\in G}\sum_{a\notin F \lessdot G} \mu(\emptyset, F) = -\sum_{F\lessdot G} |G\setminus F| \mu(\emptyset, F).$$
In other words, we have $ \mu(\emptyset, G) = \sum_{F\lessdot G} -\frac{|G\setminus F|}{|G|} \mu(\emptyset, F)$.  Repeatedly applying this identity gives
$$\mu(\emptyset, G) =  -\gamma(M'|G, \operatorname{rk} G -1).$$
Now, since one can define $\gamma(M',i)$ recursively as
$$\gamma(M',i) := \begin{cases}
-1 & \textnormal{for $i = -1$}\\[2mm]
\displaystyle \sum_{\operatorname{rk} F = i} - \frac{|E\setminus F|}{|E|} \gamma(M'|F, i -1) & \textnormal{for $i\geq 0$},
\end{cases}$$
we have
$$\gamma(M',i) = \sum_{\operatorname{rk} F = i}  \frac{|E\setminus F|}{|E|} \mu(\emptyset, F)$$
for $0\leq i \leq d+1$.  Now, recall that the characteristic polynomial of $M'$ is 
$$\chi_{M'}(t)  = \sum_{G\in \mathscr L_{M'}} \mu(\emptyset, G) t^{r - \operatorname{rk}_{M'}(G)},$$
and since $\chi_{M'}(t) = (t-1)\overline{\chi}_{M'}(t)$, to show the desired equality in the statement (b), we need show that $\gamma(M', 0) = 1$ and $\gamma(M',i+1) - \gamma(M', i) = \sum_{\operatorname{rk} G = i+1} \mu(\emptyset, G)$ for $0\leq i \leq d$.  Well, $\gamma(M',0) = 1$ is immediate, and 
\begin{align*}
\gamma(M',i+1)- \sum_{\operatorname{rk} G = i+1}\mu(\emptyset, G) &= \sum_{\operatorname{rk} G = i+1} \left(\frac{|E\setminus G|}{|E|}-1\right)\mu(\emptyset, G) \\
&=   \sum_{\operatorname{rk} G = i+1} -\frac{|G|}{|E|}\mu(\emptyset, G)\\
&=  \sum_{\operatorname{rk} G = i+1}\sum_{F\lessdot G} \frac{|G\setminus F|}{|E|} \mu(\emptyset, F)\\
&=  \sum_{\operatorname{rk} F = i} \frac{|E\setminus F|}{|E|} \mu(\emptyset, F) = \gamma(M',i)
\end{align*}
where the last equality follows from the cover-partition axiom (Definition \ref{defn:flats} (F3)).
\end{proof}

\medskip
Following \cite{AHK18} and denoting by $\mu^i(M')$ the (unsigned) coefficient of $t^{\operatorname{rk} M' -1 - i}$ in the reduced characteristic polynomial of $M'$, Proposition \ref{prop:gamma}(b) says that $|\gamma(M',i)| = \mu^i(M')$.  We are now finally ready to prove the main theorem of this section.

\begin{proof}[Proof of Theorem \ref{main}]
Note that ${d_i - 1 \choose \widetilde d_i -r_i}$ is nonzero if and only if $\widetilde d_{i-1} < r_i \leq \widetilde d_i$ since $(r_i - \widetilde d_{i-1} -1) + (\widetilde d_i - r_i) = d_i - 1$, so the condition for the expression in Theorem \ref{main} to be nonzero agrees with Proposition \ref{prop:support}.  Each monomial appearing in
$$(\overline{\mathcal T_{F_i}}^{d_i-1} \circ \cdots \circ \overline{\mathcal T_{F_1}}^{d_1 - 1})(x_{F_1}^{d_1} \cdots x_{F_k}^{d_k})$$
is of the form
$$x_{F_1^{(1)}}x_{F_1^{(2)}} \cdots x_{F_1^{(\widetilde d_1)}}x_{F_2^{(\widetilde d_1 +1)}}x_{F_1^{(\widetilde d_1 + 2)}} \cdots x_{F_2^{(\widetilde d_2)}}\cdots x_{F_k^{(\widetilde d_{k-1} +1)}}x_{F_k^{(2)}} \cdots x_{F_k^{(\widetilde d_k)}}$$
where $\operatorname{rk}_M(F_i^{(\ell)}) = \ell$ and $F_i^{(r_i)} = F_i$ for all $i = 1, \ldots, k$, and has coefficient
\begin{equation}\label{eq:coeff}
(-1)^{\sum_i (d_i -1)} \prod_{i=1}^k \left( {d_i -1 \choose \widetilde d_i - r_i} \cdot
\prod_{j=1}^{r_i - \widetilde d_{i-1}-1} \frac{|F_i^{(\widetilde d_{i-1} +j)}\setminus F_{i-1}^{(\widetilde d_{i-1}+j-1)}|}{|F_i \setminus  F_{i-1}^{(\widetilde d_{i-1}+j-1)}|} \cdot
\prod_{j=1}^{\widetilde d_i - r_i}\frac{|F_i^{(r_i + j+1)}\setminus F_i^{(r_i +j)}|}{|F_i^{(r_i +j + 1)}\setminus F_i|} \right).
\end{equation}
As all these monomials (ignoring the coefficient part) evaluate to 1 under $\deg_M$, we are summing the coefficients \eqref{eq:coeff} over all possible maximal chains $F_1^{(1)} \subsetneq \cdots \subsetneq F_k^{(\widetilde d_k)}$ with $F_i^{(r_i)} = F_i$.  First consider collecting the coefficients \eqref{eq:coeff} together by ones that have common $\prod_{j=1}^{\widetilde d_i - r_i}\frac{|F_i^{(r_i + j+1)}\setminus F_i^{(r_i +j)}|}{|F_i^{(r_i +j + 1)}\setminus F_i|}$ part.  By applying Proposition \ref{prop:gamma}.(a) for $M' = M|F_i/F_{i-1}^{(\widetilde d_{i-1})}$, we have that in each collection the $\prod_{j=1}^{r_i - \widetilde d_{i-1}-1} \frac{|F_i^{(\widetilde d_{i-1} +j)}\setminus F_{i-1}^{(\widetilde d_{i-1}+j-1)}|}{|F_i \setminus  F_{i-1}^{(\widetilde d_{i-1}+j-1)}|}$ part collect to give sum equal to 1.  Thus, we are now summing
\begin{equation}\label{eq:coeff2}
(-1)^{\sum_i (d_i -1)} \prod_{i=1}^k \left( {d_i -1 \choose \widetilde d_i - r_i} \cdot
\prod_{j=1}^{\widetilde d_i - r_i}\frac{|F_i^{(r_i + j+1)}\setminus F_i^{(r_i +j)}|}{|F_i^{(r_i +j + 1)}\setminus F_i|} \right).
\end{equation}
over all chains $F_1^{(r_1)} \subsetneq \cdots \subsetneq F_1^{(\widetilde d_1)} \subsetneq \cdots \subsetneq F_k^{(r_k)} \subsetneq \cdots \subsetneq F_k^{(\widetilde d_k)}$ where $F_i^{(r_i)} = F_i$ and $\operatorname{rk}_M(F_i^{(\ell)}) = \ell$ for all $i = 1, \ldots, k$.  In other words, our sum is
$$(-1)^{\sum_i (d_i -1)} \prod_{i=1}^k \left( {d_i -1 \choose \widetilde d_i - r_i} \cdot |\gamma(M|F_{i+1}/F_i, \widetilde d_i - r_i)| \right),$$
and applying Proposition \ref{prop:gamma} (b) gives our desired result.
\end{proof}

\medskip
\section{First applications of the volume polynomial}

We give some first applications of the volume polynomial of a matroid.  In this section, we always set the ground set of a matroid to be $[n] := \{1, \ldots, n\}$ for some $n$.

\subsection*{Volumes of generalized permutohedra}

As an immediate application, we compute the volume of a generalized permutahedron using the volume polynomial of a boolean matroid.  Postnikov gave a formula \cite[Corollary 9.4]{Pos09} for the generalizated permutohedra that arise as Minkowski sums of simplices.  Here we give a new explicit formula for volumes of any generalized permutohedra.

\medskip
A \textbf{generalized permutohedron} $P$ is obtained by sliding the facets of the permutohedron.  More precisely, for a submodular function $z_{(\cdot)}: 2^{[n]} \to \RR$ (where $[n]\supset I \mapsto z_I$) on the boolean lattice $\mathscr L_{U_{n,n}}$, we define
$$P^{\leq}(\underline z) := \{ (\underline x) \in \RR^n \ | \ \sum_{i\in [n]} x_i = z_{[n]}, \ \sum_{i\in I} x_i \leq z_I \ \forall I\subset [n]\}$$
which is a polytope of dimension at most $n-1$ in $\RR^n$.

For nonnegative set of numbers $\{y_I \ | \ I \subset [n]\}$, one can consider a Minkowski sum
$$P^+(\underline y) := \sum_{I\subset [n]} y_I \Delta_I$$
where $\Delta_I = \operatorname{Conv}(e_i : i \in I) \subset \RR^n$.  $P^+(\underline y)$ is also a generalied permutohedra, and by setting $z_I = \sum_{J\subset I} y_J$, one has $P^\leq(\underline z) = P^+(\underline y)$ (\cite[Proposition 6.3]{Pos09}).  Note that not every generalized permutohedra is realized as $P^+(\underline y)$ however (\cite[Remark 6.4]{Pos09}).  The volume of $P^+(\underline y)$ was computed by Postnikov in \cite{Pos09}.

\begin{thm}\cite[Corollary 9.4]{Pos09}
The volume of $P^+(\underline y)$ is
$$ \operatorname{Vol} P^+(\underline y) = \frac{1}{(n-1)!} \sum_{(I_1, \ldots, I_{n-1})} y_{I_1} \cdots y_{I_{n-1}}$$
where the summation is over ordered subsets $(I_1, \ldots, I_{n-1})$ of $[n]$ such that for any  $\{i_1, \ldots, i_k\}\subset [n-1]$ we have $| I_{i_1} \cup \cdots \cup I_{i_k}| \geq k+1$.
\end{thm}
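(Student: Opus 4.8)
The plan is to prove Postnikov's formula by the classical route through mixed volumes, so that the ``dragon marriage'' condition emerges from a Hall-type criterion; this argument is independent of the Chow-ring computation of $VP_M$ above (in particular it does not pass through the paper's own formula \ref{volGP}, consistent with Question~\ref{GPvolques}).

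First I would expand the Minkowski sum. Since $P^+(\underline y) = \sum_{I\subseteq[n]} y_I\Delta_I$ is a sum of lattice polytopes lying in the affine hyperplane $\{\sum x_i = \sum_I y_I\}\subset\RR^n$, which is $(n-1)$-dimensional, multilinearity and symmetry of mixed volumes give
\begin{equation*}
(n-1)!\operatorname{Vol} P^+(\underline y) \;=\; \sum_{(I_1,\dots,I_{n-1})} \operatorname{MV}\big(\Delta_{I_1},\dots,\Delta_{I_{n-1}}\big)\, y_{I_1}\cdots y_{I_{n-1}},
\end{equation*}
the sum over ordered $(n-1)$-tuples of subsets, where $\operatorname{MV}$ is the mixed volume normalized in the lattice $N^\vee=\mathbf 1^\perp$ so that $\operatorname{MV}(\Delta_{[n]},\dots,\Delta_{[n]}) = 1$ (note that, translated into $N^\vee$, $\Delta_{[n]}$ is a unimodular simplex, so this matches Postnikov's normalization $\operatorname{Vol}\Delta_{[n]} = 1/(n-1)!$). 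Comparing with the target identity, the theorem reduces to the purely geometric claim that $\operatorname{MV}(\Delta_{I_1},\dots,\Delta_{I_{n-1}})$ equals $1$ if the tuple satisfies the dragon-marriage condition $|\bigcup_{j\in J}I_j|\ge|J|+1$ for every nonempty $J\subseteq[n-1]$, and equals $0$ otherwise.

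For the vanishing direction I would invoke Minkowski's criterion: $\operatorname{MV}(K_1,\dots,K_d)>0$ iff $\dim\big(\sum_{j\in J}K_j\big)\ge|J|$ for every nonempty $J$. Since the affine span of $\Delta_I$ is the face of the standard simplex on the coordinates $I$, we have $\dim\sum_{j\in J}\Delta_{I_j} = \big|\bigcup_{j\in J}I_j\big|-1$, and so $\operatorname{MV}>0$ is exactly the dragon-marriage condition. For the value-$1$ direction I would apply Bernstein's theorem in the torus of $N$: after identifying $N_\ZZ = \ZZ^n/\ZZ\mathbf1$ with $\ZZ^{n-1}$, a generic Laurent polynomial with Newton polytope (a translate of) $\Delta_{I_j}$ is a generic linear form $\ell_j = \sum_{i\in I_j}c_{ij}x_i$ supported on the coordinates in $I_j$, so $\operatorname{MV}(\Delta_{I_1},\dots,\Delta_{I_{n-1}})$ is the number, with multiplicity, of common zeros of $\ell_1,\dots,\ell_{n-1}$ in $\{x_1\cdots x_n\ne 0\}\subseteq\PP^{n-1}$. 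Let $C=(c_{ij})$ be the $(n-1)\times n$ matrix that is zero exactly where $i\notin I_j$ and generic elsewhere. The dragon-marriage condition is a surplus (``$+1$'') form of Hall's marriage theorem for the bipartite incidence graph of the $I_j$, so $C$ has a matching saturating its rows and hence generically has rank $n-1$; thus $\bigcap_j\{\ell_j=0\}$ is the single point $[p]=\PP(\ker C)$, and the forms being linearly independent there makes the intersection transverse (multiplicity $1$). Moreover, deleting any one column $i_0$ still leaves a matching saturating the rows — the ``$+1$'' absorbs the loss of that vertex — so the corresponding maximal minor of $C$ is generically nonzero, which forces $p_{i_0}\ne 0$; as this holds for every $i_0$, the point $[p]$ lies in the torus, giving $\operatorname{MV}=1$.

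The main obstacle is making the Bernstein/BKK step fully rigorous in this mildly degenerate setting: the $\Delta_{I_j}$ are not full-dimensional and the linear systems $\langle x_i : i\in I_j\rangle$ are base-point-free but far from generic, so one must confirm that the generic torus-solution count genuinely equals the lattice-normalized mixed volume and that, for generic $c_{ij}$, all intersection points are transverse and lie in the torus. To sidestep BKK entirely, I would instead prove the value-$1$ statement by induction on $n$ using the standard recursion for mixed volumes (projecting along an edge direction of one factor $\Delta_{I_j}$, equivalently contracting a coordinate) together with monotonicity, with the same defect-Hall bookkeeping feeding the inductive hypothesis. The remaining points — the normalization constant, translating $\Delta_I$ to an honest lattice polytope in $N^\vee$, and the elementary Hall arguments — are routine.
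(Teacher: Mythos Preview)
The paper does not prove this theorem; it is quoted verbatim from \cite[Corollary 9.4]{Pos09} as background, and the paper's own contribution is the \emph{alternative} formula in Proposition~\ref{volGP} via $VP_{U_{n,n}}$, together with the open Question~\ref{GPvolques} of relating the two directly. So there is nothing in the paper to compare your argument against.

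Your route---expand the Minkowski sum by multilinearity of mixed volumes, then show $\operatorname{MV}(\Delta_{I_1},\dots,\Delta_{I_{n-1}})\in\{0,1\}$ with the value governed by the dragon-marriage condition---is essentially Postnikov's own, and it is correct. One small slip: the equality $\dim\sum_{j\in J}\Delta_{I_j}=\big|\bigcup_{j\in J}I_j\big|-1$ is false when the hypergraph $\{I_j:j\in J\}$ is disconnected (take $I_1=\{1,2\}$, $I_2=\{3,4\}$: the sum is a $2$-dimensional square, not $3$-dimensional). Only the inequality $\dim\le|\bigcup I_j|-1$ holds in general, but that is all you actually use for the vanishing direction; the nonvanishing direction you handle separately via BKK and the Hall argument, so the proof stands. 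The Hall step is fine: deleting any column $i_0$ gives $\big|\bigcup_{j\in J}(I_j\setminus\{i_0\})\big|\ge\big|\bigcup_{j\in J}I_j\big|-1\ge|J|$, so every maximal minor of $C$ is generically nonzero and the unique projective solution lies in the torus with multiplicity one.
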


To convert the above formula into one in terms of $z_I$'s, one uses M\"obius inversion formula to get
$$z_I = \sum_{I\subset J} y_J \iff y_I  = \sum_{J\subset I}z_I \mu(J,I) = \sum_{J\subset I} (-1)^{|I\setminus J|} z_J.$$

Here we give another formula in terms of the $z_I$'s.  The Bergman fan $\Sigma_{A_{n-1}}$ of $U_{n,n}$ is the normal fan of the permutohedron, whose rays correspond to subsets of $[n]$.  As nef torus invariant divisors on the toric variety $X_{\Sigma_n}$ exactly correspond to submodular functions with $z_\emptyset = z_{[n]} = 0$, well-known results from toric geometry on volumes of nef torus invariant divisors (\cite[Theorem 13.4.3]{CLS11}) imply that
$$\operatorname{Vol} P(\underline z) = \frac{1}{(n-1)!} VP_{U_{n,n}}(\underline z).$$
Note that the flats of $U_{n,n}$ are just subsets of $[n]$.  As $U_{n,n}|I/J \simeq U_{m,m}$ where $m = | I \setminus J|$, Corollary \ref{volpol} gives us the following formula.

\begin{prop}\label{volGP}
Let $z_{(\cdot )}: I \mapsto z_I\in \RR$ be a submodular function such that $z_\emptyset = z_{[n]} =0$, then the volume of the generalized permutohedron $P(\underline z)$ is
$$(n-1)! \operatorname{Vol} P(\underline z) = \sum_{I_\bullet, \underline d} (-1)^{d-k} {d \choose d_1, \ldots, d_k}\prod_{i=1}^k {d_i -1 \choose \widetilde d_i - |I_i|} {|I_{i+1}| - |I_i| - 1 \choose \widetilde d_i - |I_i|}z_{I_i}$$
where the summation is over chains $\emptyset \subsetneq I_1 \subsetneq \cdots \subsetneq I_k\subsetneq I_{k+1} = [n]$ and $\underline d = (d_1, \ldots, d_k)$ such that $\sum d_i = n-1$ and $\widetilde d_j:= \sum_{i=1}^j d_i$.
\end{prop}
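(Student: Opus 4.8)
The plan is to assemble the statement from two ingredients that are already in place by the time it is stated: the toric-geometric reduction expressing $(n-1)!\operatorname{Vol} P(\underline z)$ as the volume polynomial of the free matroid $U_{n,n}$ evaluated at $t_I := z_I$, and a direct specialization of Corollary \ref{volpol} to $U_{n,n}$. The only genuinely new computation required is the reduced characteristic polynomial of $U_{m,m}$, which feeds the $\mu^i(\cdot)$ factors.

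First I would pin down the reduction $(n-1)!\operatorname{Vol} P(\underline z) = VP_{U_{n,n}}(t_I := z_I)$. Since the lattice of flats of $U_{n,n}$ is the entire Boolean lattice $2^{[n]}$, the Bergman fan $\Sigma_{U_{n,n}}$ is the braid fan, i.e.\ the normal fan of the permutohedron, and $X_{\Sigma_{U_{n,n}}}$ is the $(n-1)$-dimensional permutohedral toric variety. Torus-invariant nef Cartier divisors on it are in bijection with submodular functions $I\mapsto z_I$ normalized by $z_\emptyset = z_{[n]} = 0$, the divisor $D_{\underline z} = \sum_I z_I x_I$ having associated polytope $P(\underline z)$ sitting in the hyperplane $\{\sum_i x_i = 0\} = (N^\vee)_\RR$. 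By the standard formula for the top self-intersection of a nef divisor on a complete toric variety of dimension $m = n-1$ (\cite[Theorem 13.4.3]{CLS11}), $\deg_{U_{n,n}}(D_{\underline z}^{\,n-1}) = (n-1)!\operatorname{Vol} P(\underline z)$, while by Definition \ref{volpoldefn} the left-hand side is exactly $VP_{U_{n,n}}$ evaluated at $t_I := z_I$. One must keep the conventions straight here: $P(\underline z)$ is $(n-1)$-dimensional and its volume is normalized against the lattice $N^\vee = \mathbf 1^\perp$, which is what makes $(n-1)!$ the correct factor.

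Second I would compute the matroid data entering Corollary \ref{volpol} for $M = U_{n,n}$ (so $d = n-1$). Restrictions and contractions of a free matroid are again free, so $U_{n,n}|I/J \simeq U_{m,m}$ with $m = |I\setminus J|$, and $\operatorname{rk}_{U_{n,n}}(I) = |I|$. The lattice of flats of $U_{m,m}$ is $2^{[m]}$ with M\"obius function $\mu(\emptyset,F) = (-1)^{|F|}$, hence $\chi_{U_{m,m}}(t) = (t-1)^m$, so $\overline\chi_{U_{m,m}}(t) = (t-1)^{m-1}$ and $\mu^i(U_{m,m}) = \binom{m-1}{i}$. Taking a chain of flats $\emptyset = F_0 \subsetneq F_1 \subsetneq \cdots \subsetneq F_k \subsetneq F_{k+1} = [n]$ to be a chain of subsets $I_\bullet$ with $r_i = |I_i|$, and noting $U_{n,n}|F_{i+1}/F_i \simeq U_{|I_{i+1}|-|I_i|,\,|I_{i+1}|-|I_i|}$, Corollary \ref{volpol} gives that the coefficient of $t_{I_1}^{d_1}\cdots t_{I_k}^{d_k}$ in $VP_{U_{n,n}}$ is
\[
(-1)^{d-k}\binom{d}{d_1,\ldots,d_k}\prod_{i=1}^{k}\binom{d_i-1}{\widetilde d_i-|I_i|}\binom{|I_{i+1}|-|I_i|-1}{\widetilde d_i-|I_i|}.
\]
Summing these coefficients against $z_{I_1}^{d_1}\cdots z_{I_k}^{d_k}$ over all chains $I_\bullet$ and all $\underline d$ with $\sum d_i = n-1$, and combining with the first step, yields exactly the stated formula.

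I do not expect a serious obstacle: the argument is essentially a bookkeeping assembly of results already established. The one point demanding care is the toric reduction — checking that the nef divisors on the permutohedral variety are precisely the submodular $z$'s, that the polytope of $D_{\underline z}$ is $P(\underline z)$ on the nose (not a translate or rescaling), and that the normalizing constant is $(n-1)!$ rather than $(n-1)!$ times some lattice index. Once those conventions are settled, plugging $\mu^i(U_{m,m}) = \binom{m-1}{i}$ into Corollary \ref{volpol} is mechanical.
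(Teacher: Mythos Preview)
Your proposal is correct and follows exactly the route the paper takes: reduce $(n-1)!\operatorname{Vol} P(\underline z)$ to $VP_{U_{n,n}}(\underline z)$ via the toric dictionary for the permutohedral variety, then specialize Corollary \ref{volpol} using $U_{n,n}|I/J \simeq U_{m,m}$ and $\mu^i(U_{m,m}) = \binom{m-1}{i}$. The only discrepancy is cosmetic: what you actually derive has $\prod_i z_{I_i}^{d_i}$, whereas the displayed statement writes $\prod_i z_{I_i}$ --- this is a typo in the statement, not a flaw in your argument.
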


\begin{rem}
Setting $z_I = \frac{(n - |I| ) |I|}{2}$, one recovers that the volume of the permutohedron $P_n := \operatorname{Conv}( (\sigma(n), \sigma(n-1), \ldots, \sigma(1))\in \RR^n \ | \ \sigma\in S_n)$ is $n^{n-2}$.
\end{rem}

It was not clear to the author whether one could derive the formula in the above Proposition \label{GPvol} directly from \cite[Corollary 9.4]{Pos09}.

\begin{ques}\label{GPvolques}
Can one derive the formula in Proposition \ref{volGP} directly from \cite[Corollary 9.4]{Pos09}, or vice versa?
\end{ques}

\medskip
\subsection*{Valuativeness of the volume polynomial}  While the volume polynomial $VP_M$ is an alternate but equivalent algebraic encoding of the Chow ring $A^\bullet(M)$, it lends itself more naturally as a function on a matroid when viewed as a map $M\mapsto VP_M \in \RR[t_S : S\in 2^{[n]}]$.   In this subsection, we illustrate this by showing that $M \mapsto VP_M$ is a valuation under matroid polytope subdivisions, a statement that would not make sense for $M \mapsto A^\bullet(M)$.

\medskip
We first give a brief sketch on matroid polytopes and matroid valuations; for more on matroid valuations, we point to \cite{AFR10} and \cite{DF10}.  Given a matroid $M$ on $[n]$ of rank $r$ with bases $\mathcal B$, its \textbf{matroid polytope} is defined as 
$$\Delta(M) := \operatorname{Conv}(e_B \ | \ B\in \mathcal B)\subset \RR^n$$
where $e_S := \sum_{i\in S} e_i$ for $S\subset [n]$ and $e_i$'s are the standard basis of $\RR^n$.  Its vertices are the indicator vectors for the bases of $M$, and it follows from a theorem of Gelfand, Goresky, MacPherson, and Serganova that the faces of $\Delta(M)$ are also matroid polytopes (\cite{GGMS87}).   Given a matroid polytope $Q$, we denote by $M_Q$ the corresponding matroid.  A \textbf{matroid subdivision} $\mathcal S$ of a matroid polytope $\Delta$ is a polyhedral subdivision $\mathcal S: \Delta = \bigcup_i \Delta(M_i)$ such that each $\Delta(M_i)$ is a matroid polytope of some matroid $M_i$ (necessarily of rank $r$ on with ground set $[n]$). Denote by $\operatorname{Int}(\mathcal S)$ the faces of $\Delta(M_i)$'s that is not on the boundary of $\Delta(M)$.  It is often of interest to see whether a function on matroids behave well via inclusion-exclusion with respect to matroid subdivisions:

\begin{defn}
Let $R$ be an abelian group, and let $\mathcal M := \bigcup_{n\geq 0} \{\textnormal{matroids on ground set $[n]$}\}$ be the set of all matroids.  A map $\varphi: \mathcal M \to R$ is a \textbf{matroid valuation} (or is \textbf{valuative}) if for any $M\in \mathcal M$ and a matroid subdivision $\mathcal S: \Delta(M) = \bigsqcup_i \Delta(M_i)$ one has
$$\varphi(M) = \sum_{Q \in \operatorname{Int}(\mathcal S)} (-1)^{\dim \Delta(M) - \dim Q}\varphi(Q_M).$$
\end{defn}

Many interesting functions on matroids are matroid valuations, for example the Tutte polynomial (\cite[Corollary 5.7]{AFR10}) and the quasi-symmetric functions $\chi^{BJR}$ introduced by Billera, Jia, and Reiner in \cite{BJR09}.  It was also shown recently in \cite{LdMRS17} that matroid analogues of Chern-Schwartz-MacPherson (CSM) classes are matroid valuations.  Here we show that the volume polynomial of a matroid is also a matroid valuation.

\begin{prop}\label{val}
$M\mapsto VP_M(t) \in \RR[t_S : S\in 2^{[n]}]$ is a matroid valuation.
\end{prop}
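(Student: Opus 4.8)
The plan is to verify the valuation identity one coefficient of $VP_M$ at a time, to use Corollary~\ref{volpol} to rewrite each coefficient through reduced characteristic polynomials of minors taken along a \emph{fixed} flag of subsets, and then to combine the valuativity of the Tutte polynomial with the compatibility of matroid minor operations with polytope subdivisions. First I would fix a subdivision $\mathcal S\colon \Delta(M)=\bigsqcup_i\Delta(M_i)$; every matroid occurring in $\operatorname{Int}(\mathcal S)$ has ground set $[n]$ and rank $d+1$, so it is enough to show that for each monomial $t_{S_1}^{d_1}\cdots t_{S_k}^{d_k}$, with $S_1,\dots,S_k\subseteq[n]$ distinct and $\sum_i d_i=d$, the $\RR$-valued function $c$ sending $M$ to the coefficient of $t_{S_1}^{d_1}\cdots t_{S_k}^{d_k}$ in $VP_M$ is valuative. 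If $S_1,\dots,S_k$ are not a chain then $c\equiv0$ by the quadratic relations defining $A^\bullet(-)$; otherwise assume $S_1\subsetneq\cdots\subsetneq S_k$ and set $S_0:=\emptyset$, $S_{k+1}:=[n]$, $\widetilde d_i:=\sum_{j\le i}d_j$.

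The key preliminary observation is that Corollary~\ref{volpol} in fact computes $c(M)$ for \emph{every} matroid $M$ of rank $d+1$ on $[n]$, namely
$$c(M)=(-1)^{d-k}\binom{d}{d_1,\dots,d_k}\prod_{i=1}^{k}\binom{d_i-1}{\widetilde d_i-\operatorname{rk}_M S_i}\,\mu^{\widetilde d_i-\operatorname{rk}_M S_i}\!\bigl(M|S_{i+1}/S_i\bigr),$$
under the convention that $\overline\chi_{M'}=0$ (hence every $\mu^j(M')=0$) whenever $M'$ has a loop. When $S_1,\dots,S_k$ is a flag of proper nonempty flats this is exactly Corollary~\ref{volpol}. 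When it is not, the left side is $0$ since $VP_M\in\RR[t_F:F\in\overline{\mathscr L_M}]$, and I must check the right side vanishes too: using $\operatorname{cl}_M(S_i)\subseteq\operatorname{cl}_M(S_{i+1})$, a short downward induction shows that $M|S_{i+1}/S_i$ is loopless for all $i\in\{1,\dots,k\}$ precisely when every $S_i$ is a flat of $M$, so if some $S_i$ is a non-flat then some factor $M|S_{i+1}/S_i$ has a loop and its $\mu$-term annihilates the product; the remaining degenerate cases $S_1=\emptyset$ and $S_k=[n]$ are killed by $\binom{d_1-1}{d_1}=0$ and $\binom{d_k-1}{-1}=0$ respectively.

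Granting this, and using $\operatorname{rk}_M S_i=\sum_{l\le i}\operatorname{rk}(M|S_l/S_{l-1})$, the coefficient $c(M)$ is a fixed polynomial expression — with coefficients depending only on $\underline d$ — in the ranks and the unsigned reduced-characteristic-polynomial coefficients of the minors $M|S_{l+1}/S_l$, $l=0,\dots,k$, along the fixed chain $S_\bullet$. Since $\chi_{M'}(t)=(-1)^{\operatorname{rk}M'}T_{M'}(1-t,0)$ and the Tutte polynomial is valuative \cite[Corollary~5.7]{AFR10}, the assignment $M'\mapsto\overline\chi_{M'}$ is a valuative invariant, and hence so is each $\mu^j$ and each rank. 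Therefore the proposition reduces to the following claim: for the fixed chain $S_\bullet$ and \emph{any} valuative invariants $v_0,\dots,v_k$, the function $M\mapsto\prod_{l=0}^{k}v_l(M|S_{l+1}/S_l)$ is valuative.

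This last claim is the step I expect to be the main obstacle, because products of valuative invariants are not valuative in general; the point is that the minors of $M$ along a single chain are not independent data. The mechanism I would use is that the iterated face operator $\operatorname{face}_{e_{S_k}}\circ\cdots\circ\operatorname{face}_{e_{S_1}}$ carries $\Delta(M)$ to $\Delta\!\bigl(\bigoplus_{l=0}^{k}M|S_{l+1}/S_l\bigr)=\prod_{l=0}^{k}\Delta(M|S_{l+1}/S_l)$ for every $M$ (each single step needs only the identity $\operatorname{face}_{e_S}\Delta(N)=\Delta((N|S)\oplus(N/S))$, valid for arbitrary $S$), so $M\mapsto\mathbbm{1}_{\prod_l\Delta(M|S_{l+1}/S_l)}$ is the composite of $M\mapsto\mathbbm{1}_{\Delta(M)}$ with a fixed linear operator on the polytope algebra. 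Using the equivalence between valuativity of an invariant and its factoring through matroid-polytope indicators \cite{AFR10}, representing each $v_l$ by a functional $\varphi_l$ and pairing against $\bigotimes_l\varphi_l$ shows that $M\mapsto\prod_l v_l(M|S_{l+1}/S_l)$ is again valuative; this is precisely the structural content of the matroid--minor Hopf monoid and the valuation-compatibility of restriction and contraction, cf.\ \cite{AFR10}, \cite{DF10}. With the claim established, every coefficient of $VP_M$ is valuative, and therefore so is $M\mapsto VP_M$.
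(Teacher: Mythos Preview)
Your proof is correct and follows essentially the same route as the paper: reduce to one coefficient at a time, express each coefficient via Corollary~\ref{volpol} in terms of $\mu^j$ of the minors $M|S_{i+1}/S_i$ along the fixed chain, use valuativity of the Tutte (hence reduced characteristic) polynomial, and then invoke the fact that a product of valuative invariants applied to the successive minors along a fixed flag is again valuative. Two points of implementation differ. First, you handle the case where some $S_i$ is not a flat by showing directly that the right side of the formula vanishes (via loops in some minor, or the boundary binomials), whereas the paper instead multiplies in rank–indicator valuations $\widetilde\varphi(n,r)$ and sums over all rank sequences $r_\bullet$; both are clean and yours is arguably more transparent. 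Second, for the key lemma that $M\mapsto\prod_l v_l(M|S_{l+1}/S_l)$ is valuative, the paper proves it by reducing to the Derksen--Fink generators $s_{\underline X,\underline q}$ and observing that $f*_A g$ is again of this form by concatenation, while you argue via the iterated face operator $\operatorname{face}_{e_{S_k}}\circ\cdots\circ\operatorname{face}_{e_{S_1}}$ on the polytope algebra; your argument is slightly more conceptual but less self-contained, and one should be a bit careful that the tensor functional $\bigotimes_l\varphi_l$ really makes sense on the image of the face map (it does, since that image always lies in the product subspace $\prod_l\RR^{S_{l+1}\setminus S_l}$).
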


We state a useful lemma before the proof.

\begin{lem}
Let $f,g: \mathcal M \to \QQ$ be matroid valuations, and let $A\subset [n]$.  Then $f*_A g$ defined by $(f*_A g)(M) = f(M|_A)g(M/A)$ is also a matroid valuation.
\end{lem}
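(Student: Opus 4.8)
The plan is to reduce the statement to the known compatibility of matroid minors with matroid-polytope subdivisions, using the indicator-function description of valuations. Recall (see \cite{AFR10}) that the assignment $M \mapsto \mathbbm 1_{\Delta(M)}$, sending a matroid to the indicator function of its matroid polytope inside the space of $\QQ$-valued functions on $\RR^n$, is itself a matroid valuation and is universal among them: every valuation $f : \mathcal M \to \QQ$ may be written as $f(N) = L_f(\mathbbm 1_{\Delta(N)})$ for a $\QQ$-linear functional $L_f$ on the span of the $\mathbbm 1_{\Delta(N)}$. Given valuations $f, g$ and $A \subseteq [n]$, fix such $L_f, L_g$; then for every matroid $M$ on $[n]$
\[
(f *_A g)(M) = L_f\!\big(\mathbbm 1_{\Delta(M|_A)}\big)\cdot L_g\!\big(\mathbbm 1_{\Delta(M/A)}\big) = (L_f \otimes L_g)\big(\mathbbm 1_{\Delta(M|_A)} \otimes \mathbbm 1_{\Delta(M/A)}\big),
\]
the tensor product being taken over $\QQ$ of functions on $\RR^A$ with functions on $\RR^{[n]\setminus A}$. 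Since $L_f \otimes L_g$ is linear and composing a valuation with a linear map preserves the defining inclusion--exclusion identity, it suffices to prove that $M \mapsto \mathbbm 1_{\Delta(M|_A)} \otimes \mathbbm 1_{\Delta(M/A)}$ is a matroid valuation.

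The next step is to identify this map geometrically. Writing $B := [n]\setminus A$ and $\ell_A := \sum_{i \in A} x_i$, the tensor $\mathbbm 1_{\Delta(M|_A)} \otimes \mathbbm 1_{\Delta(M/A)}$ is the indicator function of the product $\Delta(M|_A) \times \Delta(M/A) \subset \RR^A \times \RR^B = \RR^n$, and a short check shows this product is exactly the face of $\Delta(M)$ on which $\ell_A$ is maximized. Indeed, $\max_{\Delta(M)} \ell_A = \operatorname{rk}_M(A)$, and the vertices of that face are the $e_{B'}$ with $B'$ a basis of $M$ satisfying $|B'\cap A| = \operatorname{rk}_M(A)$, i.e.\ with $B'\cap A$ a basis of $M|_A$ and $B'\setminus A$ a basis of $M/A$; conversely any such pair of bases glues to a basis of $M$ (using that $B'\cap A$ spans $A$ in $M$). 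As a face is the convex hull of its vertices, $\Delta(M|_A) \times \Delta(M/A) = \operatorname{face}_{\ell_A}\Delta(M)$; equivalently $M|_A \oplus M/A$, the direct sum along the partition $A \sqcup B = [n]$, is the ``$\ell_A$-initial matroid'' of $M$. So the lemma comes down to showing that $M \mapsto \mathbbm 1_{\operatorname{face}_{\ell_A}\Delta(M)}$ is a matroid valuation, equivalently that the matroid-minor coproduct $M \mapsto \sum_A M|_A \otimes M/A$ is valuative.

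This last point is the crux and the main obstacle, and it is precisely where the matroid-minor Hopf monoid enters the proof: one must verify that if $\mathcal S : \Delta(M) = \bigcup_i \Delta(M_i)$ is a matroid subdivision, then taking the $\ell_A$-maximal face of each cell---discarding the cells on which $\ell_A$ fails to attain $\operatorname{rk}_M(A)$---produces a matroid subdivision of $\operatorname{face}_{\ell_A}\Delta(M)$ with cells $\Delta(M_i|_A)\times\Delta(M_i/A)$, and that the inclusion--exclusion identity of $\mathcal S$ descends to it. The delicate part is matching $\operatorname{Int}(\mathcal S)$ against the interior faces of the induced subdivision and checking that the contributions of the faces on which $\ell_A$ is not maximal cancel in pairs; I would invoke the corresponding compatibility of minors with matroid-polytope subdivisions from \cite{AFR10} rather than reprove it. Granting it, the reduction above is immediate: $f *_A g$ is the composite of the valuation $M \mapsto \mathbbm 1_{\Delta(M|_A)} \otimes \mathbbm 1_{\Delta(M/A)}$ with the linear functional $L_f \otimes L_g$, hence a valuation.
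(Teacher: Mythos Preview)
Your argument is correct and takes a genuinely different route from the paper's. The paper works in the Derksen--Fink framework \cite{DF10}: by their Corollaries~5.5--5.6, every valuation on matroids with ground set $[m]$ is a linear combination of the rank-indicator functions $s_{\underline X,\underline q}$ (equal to $1$ on $M$ iff a prescribed flag $\underline X$ in $[m]$ has prescribed ranks $\underline q$, and $0$ otherwise). One then checks directly that $s_{\underline X,\underline q} *_A s_{\underline Y,\underline r} = s_{\underline Z,\underline p}$, where $\underline Z$ concatenates $\underline X$ with $A\cup \underline Y$ and $\underline p$ concatenates $\underline q$ with $\underline r$ shifted by $q_{\text{top}}$. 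So the product of two basis elements is again a basis element, and bilinearity finishes.

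Your approach via the face map $\Delta(M)\mapsto \operatorname{face}_{\ell_A}\Delta(M)=\Delta(M|_A)\times\Delta(M/A)$ is more geometric and makes explicit the Hopf-monoid structure the paper only alludes to after the proof. The reduction to the universal valuation $M\mapsto \mathbbm 1_{\Delta(M)}$ and the identification of the product of minor polytopes with the $\ell_A$-maximal face are both correct and standard. The step you defer---that $P\mapsto \mathbbm 1_{\operatorname{face}_v P}$ respects subdivision relations---is a genuine fact about polytope valuations (not specific to matroids), but it is really the content of the lemma in your formulation; you should cite it precisely (it is implicit in \cite{AFR10} and explicit in McMullen's polytope-algebra work) rather than gesture at it, since otherwise the proof reads as circular. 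What the paper's approach buys is a fully explicit computation once the Derksen--Fink basis is granted, with no geometric input; what yours buys is a cleaner conceptual picture that generalizes beyond matroids to any class of polytopes closed under the relevant face operator.
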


\begin{proof}
We follow the notation as set in \cite[\S5]{DF10}.  By \cite[Corollary 5.5, 5.6]{DF10}, we reduce to the case when $f = \sum_{i=0}^\infty s_{\underline X^{(i)}, \underline q^{(i)}}$ and $g = \sum_{i=0}^\infty s_{\underline Y^{(i)}, \underline r^{(i)}}$, where $\underline X^{(i)}$ is a flag $\emptyset =X_0^{(i)}\subsetneq X_1^{(i)} \subsetneq \cdots \subsetneq X_{k_i}^{(i)} = [i]$ and $\underline q^{(i)}$ is an increasing sequence $0 = q_0^{(i)} \leq q_1^{(i)} < q_2^{(i)} < \cdots < q_{k_i}^{(i)}$ (likewise for $\underline Y$ and $\underline r$), and $s_{\underline X^{(i)}, \underline q^{(i)}}$ is a function on the set of matroids with ground set $[i]$ defined by
$$
s_{\underline X^{(i)}, \underline q^{(i)}}(M) = 
\begin{cases}
1 & \textnormal{if $M$ has ground set $[i]$ and $\operatorname{rk}_M(X_j^{(i)}) = q_j^{(i)}$ for all $j = 0, \ldots, i$}\\
0 & \textnormal{otherwise}.
\end{cases}
$$
Let $|A| = \ell$.  Then, $f*_A g$ (where the ground set $A\subset [n]$ of $M|_A$ is relabelled to be $[\ell]$ by increasing order and likewise for $[n]\setminus A$ for $M/A$) is equal to $s_{\underline Z, \underline p}$ where $\underline Z$ is the concatenation of $\underline X^{(\ell)}$ and $\underline Y^{(n-\ell)} \cup A$, and $\underline p$ is the concatenation of $\underline q^{(\ell)}$ and $\underline r^{(n-\ell)} + q^{(\ell)}_{k_\ell}$ (by union or plus we mean adding to each element in the sequence).  Hence, $f *_A g$ is also a matroid valuation.
\end{proof}

\begin{proof}[Proof of Proposition \ref{val}]
We show that the maps $M\mapsto (\textnormal{coefficient of } t_{S_1}^{d_1}\cdots t_{S_k}^{d_k} \textnormal{ in } VP_M)$ are matroid valuations.  As taking the Tutte polynomial is valuative, the map $M\mapsto \overline\chi_M(t)$ is valuative, and as a result, the map $M\mapsto \mu^i(M)$ is valuative for any $i \geq 0$.  Moreover, note that if $\varphi: \mathcal M \to R$ is a matroid valuation, then so is $\widetilde\varphi(n,r)$ for any fixed $n$ and $r$, where $\widetilde\varphi(n,r)$ is defined by
$$
\widetilde\varphi(n,r)(M) := 
\begin{cases}
\varphi(M) & \textnormal{if $M$ is a matroid of rank $r$ on $n$ elements}\\
0 & \textnormal{otherwise}.
\end{cases}
$$
Thus, for $s,t,u,v\in \ZZ_{\geq 0}$ such that $s\geq t$ and $u\geq v$, we define
$$
\widetilde\mu(s,t,u,v)= 
\begin{cases}
0 & \textnormal{if $\operatorname{rk}(M) \neq u-v$}\\
\frac{1}{t!} {t-1 \choose s-v} \mu^{s-v}(M) & \textnormal{if $\operatorname{rk}(M) = u-v$}.
\end{cases}
$$
Then for a flag $\emptyset \subsetneq S_1 \subsetneq \cdots \subsetneq S_k \subsetneq S_{k+1} = [n]$, an increasing sequence $0 = \widetilde d_0 < \widetilde d_1 < \cdots < \widetilde d_k = d$, and an increasing sequence $0 = r_0 < r_1 < \cdots < r_k < r_{k+1} = d+1$, we have that (letting $d_i := \widetilde d_i - \widetilde d_{i-1}$)
$$(-1)^{d-k}d! \widetilde\mu (\widetilde d_0, d_0, r_1, r_0) *_{S_1} \widetilde \mu(\widetilde d_1, d_1, r_2, r_1) *_{S_2} \cdots *_{S_k} \widetilde\mu(\widetilde d_k, d_k, r_{k+1}, r_k)$$
is a matroid valuation that evaluates a matroid $M$ to $(-1)^{d-k}{d \choose d_1, \ldots, d_k} \prod_{i=1}^k {d_i - 1\choose \widetilde d_i - r_i} \mu^{\widetilde d_i - r_i}(M|S_{i+1}/S_i)$ if $M$ is of rank $d+1$ with $S_\bullet$ being a chain of flats with $\operatorname{rk}_M(S_i) = r_i$, and 0 otherwise.  Thus, summing over all sequences $r_\bullet$ of the above function, we have that taking the coefficient of $t_{S_1}^{d_1}\cdots t_{S_k}^{d_k}$ in the $VP_M$ is a matroid valuation.
\end{proof}

That the matroid volume polynomial behaves well with respect to matroid polytope subdivisions and the appearance of matroid-minor Hopf monoid structure appearing in its expression as in the proof above suggests that there may be a generalization of Chow ring of matroids to Coxeter matroids of arbitrary Lie type (where matroids are the type $A$ case).  For Coxeter matroids see \cite{BGW03}.

\begin{ques}\label{ADE}
Is there are Hodge theory of Coxeter matroids, generalizing the Hodge theory of matroids as described in \cite{AHK18}?
\end{ques}

\section{The shifted rank volume of a matroid}

Let $M = (E,\mathscr L_M)$ be a matroid.  Following \cite{AHK18}, a (strictly) submodular function $c_{(\cdot)}: 2^E \to \RR$ with $c_\emptyset = c_E = 0$ gives a combinatorially \emph{nef (ample)} divisor $D = \sum_{F\in \overline{\mathscr L_M}} c_Fx_F \in A^1(M)$.  For realizable matroids, if the divisor $D\in A^1(M)$ is combinatorially nef (ample) then as an element of $A^1(X_{\mathscr R(M)})$ the divisor $D$ is nef (ample) in the classical sense.  As the rank function $\operatorname{rk}_M(\cdot)$ is a distinguished submodular function\footnote{Technically, we have $\operatorname{rk}_M(E) > 0$, but a function $c_{(\cdot)}: 2^E \to \RR$ defined by $c_I := \operatorname{rk}_M(I)$ for all $ I\subsetneq E$ and $c_E := 0$ is still submodular.} of a matroid, we define the following notions.

\begin{defn}\label{voldefn}
For a matroid $M$, define its \textbf{shifted rank divisor} $D_M$ to be
$$D_M := \sum_{F\in \overline{\mathscr L_M}} (\operatorname{rk} F) x_F,$$
and define the \textbf{shifted rank volume of a matroid} $M$ to be the volume of its shifted rank divisor:
$$\operatorname{shRVol}(M) := \deg\Big(\sum_{F\in \overline{\mathscr L_M}} (\operatorname{rk} F)x_F\Big)^{\operatorname{rk} M-1}.$$
\end{defn}

A slight modification of the proof of Proposition \ref{val} implies that the shifted rank volume of a matroid is a valuative invariant.

\begin{cor}
The map $M\mapsto \operatorname{shRVol}(M)$ is a matroid valuation.
\end{cor}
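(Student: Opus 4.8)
The plan is to follow the proof of Proposition \ref{val}, carefully tracking the specialization $t_F \mapsto \operatorname{rk}_M(F)$. By Corollary \ref{volpol}, for a matroid $M$ of rank $r = d+1$ on $[n]$ one has
$$\operatorname{shRVol}(M) = \sum_{\mathscr F,\,\underline d}(-1)^{d-k}{d \choose d_1,\ldots,d_k}\prod_{i=1}^{k}(\operatorname{rk}_M F_i)^{d_i}{d_i - 1 \choose \widetilde d_i - \operatorname{rk}_M F_i}\mu^{\widetilde d_i - \operatorname{rk}_M F_i}(M|F_{i+1}/F_i),$$
the sum running over chains of flats $\mathscr F:\emptyset\subsetneq F_1\subsetneq\cdots\subsetneq F_k\subsetneq F_{k+1}=[n]$ and compositions $\underline d=(d_1,\ldots,d_k)$ with $\sum d_i = d$ and $\widetilde d_i = \sum_{j\le i}d_j$. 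Compared with the coefficient of $t_{F_1}^{d_1}\cdots t_{F_k}^{d_k}$ treated in Proposition \ref{val}, the only new ingredient is the factor $\prod_i(\operatorname{rk}_M F_i)^{d_i}$; since this depends on $M$, the specialization is \emph{not} a fixed substitution on coefficients, so one cannot merely invoke "$M \mapsto VP_M$ is valuative and then evaluate". This is the one point that needs attention.

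The resolution is that this $M$-dependence is already absorbed by the bookkeeping in the proof of Proposition \ref{val}. There, for a flag $S_\bullet:\emptyset\subsetneq S_1\subsetneq\cdots\subsetneq S_k\subsetneq S_{k+1}=[n]$ of \emph{subsets}, composition data $\underline d$, and an increasing sequence of candidate ranks $0 = r_0 < r_1 < \cdots < r_k < r_{k+1} = d+1$, one has the matroid valuation
$$\varphi_{S_\bullet,\underline d,r_\bullet} := (-1)^{d-k}\,d!\;\widetilde\mu(\widetilde d_0,d_0,r_1,r_0)*_{S_1}\widetilde\mu(\widetilde d_1,d_1,r_2,r_1)*_{S_2}\cdots*_{S_k}\widetilde\mu(\widetilde d_k,d_k,r_{k+1},r_k),$$
and by construction $\varphi_{S_\bullet,\underline d,r_\bullet}(M) = 0$ unless $M$ has rank $d+1$ and $\operatorname{rk}_M(S_i) = r_i$ for every $i$. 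On the locus of matroids where $\varphi_{S_\bullet,\underline d,r_\bullet}$ is nonzero, the monomial $\prod_i(\operatorname{rk}_M S_i)^{d_i}$ therefore equals the genuine constant $\prod_i r_i^{d_i}\in\ZZ_{\ge 0}$. Hence $\big(\prod_i r_i^{d_i}\big)\cdot\varphi_{S_\bullet,\underline d,r_\bullet}$ — a scalar multiple of a matroid valuation, hence again a matroid valuation — sends $M$ to the corresponding summand of the displayed expression for $\operatorname{shRVol}(M)$.

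Summing over all flags $S_\bullet$ of subsets of $[n]$, all composition data $\underline d$, and all rank sequences $r_\bullet$ (a finite collection for each fixed ground set $[n]$), and using that a finite sum of matroid valuations is a matroid valuation, recovers $M\mapsto\operatorname{shRVol}(M)$; this proves the corollary. There is no genuine obstacle: the sole modification of the argument for Proposition \ref{val} is the observation that the $M$-dependent factor $\prod_i(\operatorname{rk}_M S_i)^{d_i}$ may be replaced by the constant $\prod_i r_i^{d_i}$ precisely because $\varphi_{S_\bullet,\underline d,r_\bullet}$ vanishes off the locus $\{\operatorname{rk}_M(S_i) = r_i \text{ for all } i\}$, so no new valuativeness input is required.
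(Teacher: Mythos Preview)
Your argument is correct and is precisely the ``slight modification of the proof of Proposition \ref{val}'' that the paper invokes without spelling out: the valuations $\varphi_{S_\bullet,\underline d,r_\bullet}$ built there already vanish off the locus $\{\operatorname{rk}_M(S_i)=r_i\}$, so the $M$-dependent weight $\prod_i(\operatorname{rk}_M S_i)^{d_i}$ may be replaced by the constant $\prod_i r_i^{d_i}$ before summing. You have supplied exactly the detail the paper omits, and by the same route.
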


We remark that in a forthcoming work \cite{Eur19}, the author discusses various geometrically motivated divisors including what the author calls the rank divisor, to which the shifted rank divisor here is closely related.  Moreover, the author gives a formula for the shifted rank volume in terms of beta invariants, but the author is not aware of any nice combinatorial meaning of the shifted rank volume of a matroid.  In fact, it seems to be a genuinely new invariant of a matroid.

\begin{rem}[Relation to other invariants, or lack thereof] \label{volrel}  We point to \verb+volumeMatroid.m2+ for computations supporting the statements below.

\begin{itemize}
\item Volume $\operatorname{shRVol}(M)$ is not a complete invariant.
\item Same Tutte polynomial does not imply same volume, and vice versa.  The two graphs in Figure 2 of \cite{CLP15} have the same Tutte polynomial but their matroids are not isomorphic; their shifted rank volumes are 1533457 and 1534702.  There are many examples of matroids with same volume but with different Tutte polynomials.
\item Same volume of the matroid polytope does not imply same shifted rank volume, and vice versa.
\end{itemize}

\end{rem}

\begin{rem}
Since the two graphs in the Figure 2 of \cite{CLP15} have the same Tutte polynomial but different matroid volumes, we thus obtain an example of two \emph{simple} matroids with same Tutte polynomials but with different $\mathcal G$-invariant (see \cite{Der09} for $\mathcal G$-invariant of a matroid).
\end{rem}

For realizable matroids however, the volume measures how general the associated hyperplane arrngement is.

\begin{thm}\label{realmax}
Let $M$ be a realizable matroid of rank $r$ on $n$ elements.  Then 
$$\operatorname{shRVol}(M) \leq \operatorname{shRVol}(U_{r,n}) = n^{r-1} \quad \textnormal{ with equality iff $M = U_{r,n}$.}
$$
\end{thm}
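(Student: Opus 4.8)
The plan is to realize $\operatorname{shRVol}(M)$ as the self-intersection number of an honest nef divisor on $X := X_{\mathscr R(M)}$ and to compare it, via the blow-down $\pi_M\colon X \to \PP V^* \cong \PP^{r-1}$, with the volume of $\mathcal O_{\PP^{r-1}}(n)$. Fix $i\in E$ and set $\alpha_M := \sum_{F\ni i} x_F\in A^1(M)$; by the linear relations $\mathcal J_M$ this class does not depend on $i$, and geometrically $\alpha_M = \pi_M^* c_1(\mathcal O_{\PP V^*}(1))$ — the strict transform $\widetilde{L_a}$ of a hyperplane $L_a$ through a fixed atom $a$, plus its exceptional corrections, in the blow-up description of Definition \ref{defn:wndcpt}. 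Summing the (equal) classes $\sum_{F\ni i} x_F$ over all $i\in E$ gives $n\,\alpha_M = \sum_{F\in\overline{\mathscr L_M}} |F|\,x_F$ in $A^1(M)$, hence
$$n\,\alpha_M - D_M \;=\; \sum_{F\in\overline{\mathscr L_M}}\bigl(|F|-\operatorname{rk}_M F\bigr)\,x_F ,$$
which is an effective $\RR$-divisor on $X$, since $|F|\ge \operatorname{rk}_M F$ for every flat and each $x_F$ is the class of a prime boundary divisor.

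For the inequality: as $\operatorname{rk}_M$ (extended by $0$ on $E$) is submodular, $D_M$ is combinatorially nef, hence nef on $X$, so $\operatorname{shRVol}(M) = \deg_M(D_M^{r-1}) = \operatorname{vol}_X(D_M)$ (the volume of a nef divisor is its top self-intersection). Writing $D_M = \pi_M^*\mathcal O(n) - E$ with $E := n\alpha_M - D_M\succeq 0$, every global section of $\mathcal O_X(mD_M)$ pushes forward under the birational morphism $\pi_M$ to a section of $\mathcal O_{\PP^{r-1}}(mn)$, so $h^0(X,mD_M)\le h^0(\PP^{r-1},\mathcal O(mn))$ and therefore
$$\operatorname{shRVol}(M) \;=\; \operatorname{vol}_X(D_M) \;\le\; \operatorname{vol}_{\PP^{r-1}}\bigl(\mathcal O(n)\bigr) \;=\; n^{r-1}.$$
(Alternatively one may expand $\deg\bigl((n\alpha_M)^{r-1}\bigr)-\deg(D_M^{r-1}) = \sum_j \deg\bigl((n\alpha_M)^{j} D_M^{\,r-2-j} E\bigr)$ and note that each summand, being a product of nef classes with an effective divisor, is $\ge 0$.) When $M = U_{r,n}$ every proper flat is independent, so $|F|=\operatorname{rk}_M F$, whence $E=0$, $D_M=\pi_M^*\mathcal O(n)$ and $\operatorname{shRVol}(M)=n^{r-1}$.

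For the converse I would show that if $M\ne U_{r,n}$ the inclusion $H^0(X,mD_M)\hookrightarrow H^0(\PP^{r-1},\mathcal O(mn))$ is asymptotically far from surjective. If $M\ne U_{r,n}$ there is a circuit $C$ with $|C|\le r$, and $\operatorname{cl}(C)$ is a proper flat with $|\operatorname{cl}(C)|\ge |C| > |C|-1 = \operatorname{rk}_M(\operatorname{cl}(C))$; fix such a flat $F_0$, so the coefficient of $x_{F_0}$ in $E$ is $\ge 1$. Tracing multiplicities through the successive blow-ups defining $X$, a section $s$ of $\mathcal O_X(mD_M)$, regarded as a degree-$mn$ form $\bar s$ on $\PP^{r-1}$, must vanish to order $\ge m(|F_0|-\operatorname{rk}_M F_0)\ge m$ along the linear subspace $L_{F_0}$. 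Writing $S$ for the coordinate ring of $\PP^{r-1}$ and $c:=\operatorname{rk}_M F_0$, the space of such forms has codimension $\dim\bigl(S/I_{L_{F_0}}^{\,m}\bigr)_{mn}$ inside $H^0(\mathcal O(mn))$; since $S/I_{L_{F_0}}$ is a polynomial ring in $r-c$ variables with associated graded $\operatorname{gr}_{I_{L_{F_0}}}S$ a polynomial ring over it in $c$ degree-one variables, this codimension is a positive constant times $m^{r-1} + O(m^{r-2})$. Hence $\operatorname{shRVol}(M)=\operatorname{vol}_X(D_M)\le n^{r-1} - (\text{positive constant}) < n^{r-1}$, completing the proof.

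The main obstacle is the multiplicity bookkeeping in the last paragraph: because $X$ is an iterated blow-up and $Z_{F_0}$ is (for $\operatorname{rk}_M F_0\ge 2$) the exceptional divisor of blowing up the strict transform of $L_{F_0}$, one must verify that the coefficient of $Z_{F_0}$ in the total transform of $\operatorname{div}(\bar s)$ is at most $\operatorname{mult}_{L_{F_0}}(\bar s)$ — which holds because a general point of the strict transform of $L_{F_0}$ avoids all earlier exceptional divisors and strict-transform multiplicity never exceeds total-transform multiplicity. The remaining ingredients are standard: the dictionary of \cite{FY04} and \cite{AHK18} (realizability, $A^\bullet(M)\cong A^\bullet(X)$, combinatorial nef $\Rightarrow$ nef, $\alpha_M=\pi_M^*\mathcal O(1)$, $\operatorname{vol}$ of a nef divisor), the continuity/top-self-intersection description of volumes from \cite{Laz04}, and an elementary Hilbert-function estimate.
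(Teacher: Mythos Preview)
Your argument is essentially the paper's own: both rewrite the shifted rank divisor as $D_M = n\,\pi_M^*h - E$ with $E=\sum_F(|F|-\operatorname{rk}_M F)x_F$ effective (this is Proposition~\ref{subsys}), observe that $D_M$ is nef so its volume is its top self-intersection, and compare section rings $R(D_M)_\bullet \subset R(\pi_M^*\mathscr O(n))_\bullet$ to get $\operatorname{shRVol}(M)\le n^{r-1}$ with equality for $U_{r,n}$.

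Where you go a bit further than the paper is on the \emph{strictness} of the inequality. The paper's proof simply records that the inclusion of section rings is proper when $M\ne U_{r,n}$ and leaves it at that; strictly speaking, a proper inclusion of graded rings does not by itself force a strict volume inequality. Your codimension estimate---forms of degree $mn$ vanishing to order $\ge m$ along a fixed linear space $L_{F_0}$ cut out a subspace of codimension $\sim c\cdot m^{r-1}$ in $H^0(\mathscr O_{\PP^{r-1}}(mn))$---supplies exactly the missing asymptotic bound, and your remark that a general point of the strict transform of $L_{F_0}$ avoids all earlier exceptional centers correctly handles the multiplicity bookkeeping through the iterated blow-up. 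The alternative you mention, expanding $(n\alpha_M)^{r-1}-D_M^{r-1}$ as a sum of nef--nef--effective intersections, gives the non-strict inequality immediately but would also need an extra argument (e.g.\ that at least one summand is positive) to recover strictness; your section-counting route is cleaner for that purpose.
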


The proof is algebro-geometric in nature, and follows from the following.

\begin{prop}\label{subsys}
Let $M$ be a simple realizable matroid of rank $r = d+1$ on a ground set $E = \{1, \ldots, n\}$.  Let $\mathscr R(M)$ be a realization of $M$ over an algebraically closed field $\mathbbm k$, and let $\pi: X_{\mathscr R(M)} \to \PP V^* \simeq \PP^d_{\mathbbm k}$ be the blow-down map  as described in Definition \ref{defn:wndcpt}.  Then the complete linear series $|D_M|$ of the divisor $D_M$ is a linear subseries of $|\mathscr \pi^*(\mathscr O_{\PP_{\mathbbm k}^d}(n))|$, with equality iff $M = U_{r,n}$.

More precisely, $|D_M|$ is isomorphic to the linear subseries $L \subset |\mathscr O_{\PP^d}(n)|$ consisting of homogeneous degree $n$ polynomials on $\PP^d$ vanishing on $L_F$ with order at least $|F| -\operatorname{rk} F$.
\end{prop}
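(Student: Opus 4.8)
The plan is to identify $\mathscr O_X(D_M)$ (writing $X:=X_{\mathscr R(M)}$ and $\pi\colon X\to\PP V^*\simeq\PP^d$ for the blow-down) with the pullback $\pi^*\mathscr O_{\PP V^*}(n)$ twisted down by an explicit effective combination of the exceptional divisors $\widetilde{L_F}$, and then to read off its global sections as degree-$n$ forms on $\PP^d$ with prescribed vanishing along the subspaces $L_F$.

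The first step is the divisor class identity
$$D_M \;=\; \pi^*\mathscr O_{\PP V^*}(n) \;-\; \sum_{F\in\overline{\mathscr L_M}}\big(|F|-\operatorname{rk}F\big)\,\widetilde{L_F}\qquad\text{in }A^1(X)=A^1(M).$$
Its input is the standard fact (cf.\ \cite{FY04}, \cite{AHK18}) that $\pi^*\mathscr O_{\PP V^*}(1)$ has class $\alpha:=\sum_{F\ni i}x_F$ for any $i\in E$; one recovers this by taking the total transform of the hyperplane $L_a=\{f:f(v_i)=0\}$, where $a$ is the atom containing $i$, all of whose exceptional multiplicities equal $1$ because each $L_F\subseteq L_a$ is a smooth linear subspace of the smooth hyperplane $L_a$ and strict transforms stay smooth through the iterated blow-up. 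Since $M$ is simple every atom is a singleton $\{i\}$, so $x_{\{i\}}=\alpha-\sum_{F\ni i,\ \operatorname{rk}F\geq2}x_F$; summing over $i\in E$ gives $\sum_{\operatorname{rk}F=1}x_F=n\alpha-\sum_{\operatorname{rk}F\geq2}|F|\,x_F$, and combining with $D_M=\sum_F(\operatorname{rk}F)x_F$ yields the identity (the atoms contribute nothing since $|F|-\operatorname{rk}F=0$ for them). I expect this step to be routine bookkeeping.

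The second step passes to global sections. Since $\pi$ is a proper birational morphism of smooth varieties, $\pi_*\mathscr O_X=\mathscr O_{\PP^d}$, so by the projection formula $H^0(X,\pi^*\mathscr O_{\PP^d}(n))$ is identified with the space of degree-$n$ homogeneous polynomials on $\PP^d$; under this identification a form $s$ gives a section of the subsheaf $\mathscr O_X(D_M)\subseteq\mathscr O_X(\pi^*\mathscr O(n))$ exactly when $\operatorname{ord}_{\widetilde{L_F}}(\pi^*s)\geq|F|-\operatorname{rk}F$ for every $F$. Here I would invoke the standard feature of De Concini--Procesi wonderful models (\cite{DCP95}) that, because the $L_F$ are smooth and are blown up in order of increasing dimension, $\operatorname{ord}_{\widetilde{L_F}}(\pi^*s)$ equals the multiplicity of $\{s=0\}$ at the generic point of $L_F$ (blowing up subspaces properly contained in $L_F$, or meeting it transversally, does not change this generic multiplicity). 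This yields $H^0(X,\mathscr O_X(D_M))\cong\{\,s:\operatorname{mult}_{L_F}(s)\geq|F|-\operatorname{rk}F\ \text{for all }F\in\overline{\mathscr L_M}\,\}=L$, hence $|D_M|$ is, via $\pi^*$, a linear subseries of $|\mathscr O_{\PP^d}(n)|=|\pi^*\mathscr O_{\PP^d}(n)|$. This multiplicity identity is the one genuinely delicate point and I expect it to be the main obstacle to a self-contained write-up; the cleanest route is an induction on the blow-up stages.

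Finally, for the equality clause: $L=|\mathscr O_{\PP^d}(n)|$ iff every imposed order $|F|-\operatorname{rk}F$ is zero, since if $|F|>\operatorname{rk}F$ for some proper flat $F$ then $\ell^{\,n}$ for a general linear form $\ell$ lies outside $L$. Among simple matroids of rank $r$ on $n$ elements, $M\neq U_{r,n}$ forces a circuit $C$ with $|C|\leq r$, whence $G:=\operatorname{cl}(C)$ is a proper flat with $|G|\geq|C|>|C|-1=\operatorname{rk}G$; conversely every proper flat of $U_{r,n}$ is a set whose cardinality equals its rank. Hence equality holds iff $M=U_{r,n}$, which completes the plan.
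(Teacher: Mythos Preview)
Your proposal is correct and follows essentially the same route as the paper: both derive the identity $D_M = n\,\pi^*h - \sum_{\operatorname{rk}F>1}(|F|-\operatorname{rk}F)\,x_F$ by substituting $x_{\{i\}}=\pi^*h-\sum_{F\supsetneq\{i\}}x_F$ and summing over $i\in E$, then read off $|D_M|$ as the degree-$n$ forms with the prescribed vanishing, and conclude equality iff $M=U_{r,n}$ via the characterization of simple matroids with $|F|=\operatorname{rk}F$ for all proper flats.

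The only notable difference is in the passage from orders along the $\widetilde{L_F}$ to multiplicities along the $L_F$: you argue directly that $\operatorname{ord}_{\widetilde{L_F}}(\pi^*s)=\operatorname{mult}_{L_F}(s)$ using the blow-up order (lower-dimensional centers first, so the generic point of $L_F$ is untouched before $L_F$ itself is blown up), whereas the paper invokes the monotonicity $|F|-\operatorname{rk}F\geq|G|-\operatorname{rk}G$ for $F\supseteq G$ to assert that the exceptional vanishing conditions descend to conditions on $\PP^d$. Your justification is cleaner and more self-contained; the paper's is terser but relies on the same underlying geometry.
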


\begin{proof}
Let $\mathcal A_{\mathscr R(M)} = \{L_i\}_{i\in E} \subset \PP V^* \simeq \PP^d$ be the hyperplane arrangement associated to the realization $\mathscr R(M)$ of $M$,  and denote $h := c_1(\mathscr O_{\PP^d}(1))$ the hyperplane class.  Then for any $i \in E$, we have
$$\pi^* h = \pi^*[L_i] =  \sum_{i\in F} x_F$$
by the construction of $X_{\mathscr R(M)}$ as consecutive blow-ups.  Thus, we have
\begin{align*}
 D_M = \sum_F (\operatorname{rk} F) x_F =&  \sum_{i\in E} x_i + \sum_{\operatorname{rk} F >1} (\operatorname{rk} F)x_F \\
=&  \sum_{i\in  E}\Big(\pi^*h - \sum_{i \subsetneq F} x_F\Big) + \sum_{\operatorname{rk} F > 1}(\operatorname{rk} F)x_F \\
=&  n\pi^* h + \sum_{\operatorname{rk} F > 1}(\operatorname{rk} F - |F|)x_F\\
=&  n\pi^* h - \sum_{\operatorname{rk} F > 1}(|F| - \operatorname{rk} F)x_F.
\end{align*}
Hence, noting that the rank function on $M$ satisfies $|S| \geq \operatorname{rk} S$ for any subset $S\subset E$, we see that the divisors in $|D_M|$ are exactly the divisors in $|\pi^*\mathscr O_{\PP_k^d}(n)|$ that vanish on $\widetilde L_F$ with order at least $|F| -\operatorname{rk} F \geq 0$.  Moreover, since $	|F| -\operatorname{rk} F \geq |G| - \operatorname{rk} G$ for any flats $F\supset G$, the divisors in $|D_M|$ are in fact elements of $|\mathscr O_{\PP^d}(n)|$ vanishing on $L_F$ with order at least $|F| = \operatorname{rk} F$.  Lastly, the only simple matroids with the property $|F| = \operatorname{rk} F$ for all flats $F\in \overline{\mathscr  L_M}$ are the uniform matroids.  (Given an $r$-subset $S$, its closure $\overline S$ is a flat not equal to $E$ iff $S$ is not a basis).
\end{proof}

Two immediate consequences follow.

\begin{cor}
The volume of $U_{r,n}$ is $n^{r-1}$.
\end{cor}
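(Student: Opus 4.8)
The plan is to read the answer off Proposition \ref{subsys}, specialized to $M = U_{r,n}$. The uniform matroid $U_{r,n}$ is realizable (take $n$ sufficiently generic vectors in $\mathbbm k^r$) and simple for $r\geq 2$; the case $r=1$ is degenerate, with $d=0$ and $\operatorname{shRVol}(U_{1,n})=1=n^{0}$, so we may assume $r\geq 2$. Fix a realization $\mathscr R(U_{r,n})$ with blow-down map $\pi\colon X_{\mathscr R(U_{r,n})}\to \PP V^*\simeq \PP^d$ and hyperplane class $h = c_1(\mathscr O_{\PP^d}(1))$.

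First I would identify $D_{U_{r,n}}$ explicitly. The proper nonempty flats of $U_{r,n}$ are exactly the subsets $F\subseteq E$ with $1\leq |F|\leq r-1$, and each satisfies $\operatorname{rk} F = |F|$. Substituting $|F| = \operatorname{rk} F$ into the identity
$$D_M = n\,\pi^* h - \sum_{\operatorname{rk} F > 1}\big(|F| - \operatorname{rk} F\big)\,x_F$$
derived in the proof of Proposition \ref{subsys} gives $D_{U_{r,n}} = n\,\pi^* h$ in $A^1(U_{r,n}) \simeq A^1\big(X_{\mathscr R(U_{r,n})}\big)$.

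Next I would compute the top self-intersection. Under the isomorphism $A^\bullet(M) \simeq A^\bullet\big(X_{\mathscr R(M)}\big)$ the degree map $\deg_M$ is identified with $\int_X$, and since $\pi$ is birational the projection formula yields $\int_X (\pi^* h)^d = \int_{\PP^d} h^d = 1$. Hence
$$\operatorname{shRVol}(U_{r,n}) = \deg_M\big(D_{U_{r,n}}\big)^d = n^d \int_X (\pi^* h)^d = n^d = n^{r-1}.$$

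I expect no real obstacle: the content is entirely in Proposition \ref{subsys}, and the only points needing a word of care are the description of the flats of a uniform matroid and the standard fact that the top self-intersection of a pulled-back hyperplane class under a birational morphism to $\PP^d$ is $1$. One can also argue without geometry: the same manipulation shows $D_{U_{r,n}} = n\alpha$, where $\alpha := \sum_{F\ni 1} x_F \in A^1(U_{r,n})$ is a class independent of the chosen element by the linear relations $\mathcal J_M$, and $\deg_M(\alpha^d) = 1$ is a standard fact about the Chow ring of a matroid.
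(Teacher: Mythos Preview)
Your proof is correct and is essentially the same as the paper's: both identify $D_{U_{r,n}}$ with $n\,\pi^*h$ via Proposition \ref{subsys} and then compute its top self-intersection as $n^d$. The only cosmetic difference is that the paper phrases the last step as the degree of the $n$-uple Veronese embedding $\PP^d\hookrightarrow\PP^{\binom{d+n}{d}-1}$, whereas you invoke the projection formula for the birational map $\pi$; these are two ways of saying $(n\,\pi^*h)^d = n^d$.
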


\begin{proof}
The map given by $\pi^*\mathscr O_{\PP^d}(n)$ factors as $X_{\mathscr R(M)} \overset{\pi}{\to} \PP^d \overset{\nu_n} \to \PP^{{d+n \choose d}-1}$ where $\nu_n$ is the $n$-tuple Veronese embedding, and the degree of the Veronese embedding is $n^d = n^{r-1}$.
\end{proof}

\begin{proof}[Proof of Theorem \ref{realmax}]
First assume $M$ is simple.  Then Proposition \ref{subsys} shows that we have an inclusion of section rings $R(D_M)_\bullet \subset R(\pi^*\mathscr O_{\PP^r}(n))_\bullet$ with equality iff $M = U_{r,n}$.  As both divisors $D_M$ and $\pi^* \mathscr O_{\PP^r}(n)$ are nef divisors on $X_{\mathscr R(M)}$, the volume polynomial agrees with the volume of a divisor in the classical sense.  If $M$ is not simple, then its volume is the same as the simple matroid $M'$ satifying $\mathscr L_M = \mathscr L_{M'}$, and $M'$ has at most $n-1$ elements.
\end{proof}

In a forthcoming paper \cite{Eur19}, the author gives a proof for Theorem \ref{realmax} without the realizability condition.  However, the proof is not a combinatorial reflection of the geometric proof given here.  Trying to apply similar method as in the proof of Theorem \ref{realmax} for the non-realizable matroids naturally leads to the following question.

\begin{ques}\label{NObody}
Is there a naturally associated convex body of which this volume polynomial is measuring the volume?  In other words, is there a theory of Newton-Okounkov bodies for general matroids (a.k.a.\ linear tropical varieties)?
\end{ques}

On the flip side of maximal volumes, we have the following conjecture on the minimal values.  We have confirmed the validity of the conjecture up to all matroids on 8 elements.

\begin{conj}\label{min}
The minimum volume among simple matroids of rank $r$ on $n$ is achieved uniquely by the matroid $U_{r-2,r-2}\oplus U_{2,n-r+2}$, and its volume is $r^{r-2}((n-r+1)(r-1)+1)$.
\end{conj}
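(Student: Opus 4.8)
Here is a possible line of attack.

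We would split the proof into two parts: (i) verifying that $\operatorname{shRVol}(U_{r-2,r-2}\oplus U_{2,n-r+2})$ equals $r^{r-2}((n-r+1)(r-1)+1)$, and (ii) proving that this value is a strict lower bound over all simple matroids of rank $r$ on $n$ elements.

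Part (i) is a direct computation. Write $N=U_{r-2,r-2}\oplus U_{2,m}$ with $m=n-r+2$ and ground set $A\sqcup B$, $|A|=r-2$, $|B|=m$; the flats of $N$ are exactly the sets $F_A$, $F_A\cup\{b\}$, and $F_A\cup B$ for $F_A\subseteq A$, $b\in B$, so $\mathscr L_N$ is completely explicit. The most efficient route uses the relation $\sum_{F}|F|\,x_F=n\alpha$ in $A^\bullet(N)$, where $\alpha:=\sum_{F\ni i_0}x_F$ is independent of $i_0$ by the linear relations $\mathcal J_N$ and satisfies $\deg\alpha^{r-1}=1$: since the only flats of $N$ with $|F|>\operatorname{rk}F$ are the $F_A\cup B$ with $F_A\subsetneq A$, all having excess $|F|-\operatorname{rk}F=m-2=n-r$, we obtain $D_N=n\alpha-(n-r)\beta$ with $\beta:=\sum_{F_A\subsetneq A}x_{F_A\cup B}$. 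Expanding $D_N^{r-1}$ and using the projection formula (equivalently, the tight-toppling recursion behind Theorem \ref{main}) reduces $\operatorname{shRVol}(N)$ to the numbers $\deg(\alpha^{j}\beta^{r-1-j})$, which lie in the subring generated by $\alpha$ and the classes $x_{F_A\cup B}$; since the poset of the latter is the Boolean lattice on $A$, this is in effect a computation on a permutohedral variety of dimension $r-3$ and can be carried out uniformly in $r,n$. As a sanity check, for $n=r$ one has $D_N=n\alpha$, so $\operatorname{shRVol}(N)=n^{r-1}\deg\alpha^{r-1}=r^{r-1}=r^{r-2}\cdot r$, matching the formula; one may also cross-check small cases against the fact that $\operatorname{shRVol}$ is a matroid valuation. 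Alternatively, feed the explicit $\mathscr L_N$ together with $\overline\chi_{U_{k,k}}=(t-1)^{k-1}$, $\overline\chi_{U_{1,k}}=1$, $\overline\chi_{U_{2,k}}=t-(k-1)$, and $\overline\chi_{N_1\oplus N_2}=(t-1)\overline\chi_{N_1}\overline\chi_{N_2}$, into Corollary \ref{volpol} and evaluate the resulting binomial sum.

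For part (ii) we would induct on $n$; the base case $n=r$ is the computation above, as $U_{r,r}$ is the only simple matroid of rank $r$ on $r$ elements. For the inductive step, note that a simple matroid $M$ of rank $r$ on $n>r$ elements has at least one connected component of rank $\ge2$, so $M\cong M_0\oplus U_{r-s,r-s}$ after relabeling, where $M_0$ is the join of the rank-$\ge2$ components, of rank $s\ge2$ on $m\ge s+1$ elements. We would then need two ingredients: (a) a monotonicity lemma $\operatorname{shRVol}(M'\oplus U_{1,1})\ge g(\operatorname{shRVol}(M'))$ for an explicit strictly increasing $g$ depending only on $\operatorname{rk}M'$ and $|M'|$, proved by tracking how adjoining a coloop alters $\mathscr L_{M'}$ and hence the toppling expansion of Theorem \ref{main}, which lets us strip coloops off one at a time; and (b) for a connected simple matroid $M_0$ of rank $s\ge3$ on $m$ elements, a lower bound forcing $\operatorname{shRVol}(M_0\oplus U_{r-s,r-s})$ to strictly exceed the conjectured minimum — the idea being that such an $M_0$ admits a further degeneration (splitting off a modular flat, or passing to a matroid with an extra component) and that each such step, tracked through the valuativity of $\operatorname{shRVol}$ applied to the corresponding matroid-polytope subdivision, strictly lowers the volume. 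Iterating (b) drives $s$ down to $2$, and a connected simple rank-$2$ matroid on $m$ elements is necessarily $U_{2,m}$; matching $m=n-r+2$ then follows from (a), and tracking equality at each step yields the uniqueness clause.

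The main obstacle is ingredient (ii)(b): unlike in Theorem \ref{realmax}, there is no ambient projective space providing an upper-bounding complete linear series (Proposition \ref{subsys} has no evident counterpart here), so the required inequality must be produced either by a direct estimate inside the toppling recursion — delicate because of the alternating sign $(-1)^{d-k}$ in Theorem \ref{main} — or by converting it, via the Hodge--Riemann relations for $A^\bullet(M)$ from \cite{AHK18}, into a mixed-volume / log-concavity inequality extremized precisely at the claimed configuration. Isolating a canonical degeneration move that provably lowers $\operatorname{shRVol}$ and whose only fixed points are the matroids $U_{r-2,r-2}\oplus U_{2,n-r+2}$ is, I expect, where the real work lies, and is also what is needed to upgrade the bound to the stated equality characterization.
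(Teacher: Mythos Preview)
The statement you are attempting is labeled and treated in the paper as a \emph{conjecture}: the paper offers no proof, only the remark that it has been verified computationally for all matroids on at most $8$ elements. There is therefore no proof in the paper to compare your proposal against.

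Evaluated on its own terms, your proposal is an outline rather than a proof, and you are candid about this. Part (i) is plausible and likely completable: the flat structure of $U_{r-2,r-2}\oplus U_{2,m}$ is explicit, your identification of the ``excess'' flats $F_A\cup B$ is correct, and the rewriting $D_N=n\alpha-(n-r)\beta$ is sound; what remains is to actually carry out the evaluation of $\deg(\alpha^j\beta^{r-1-j})$ and sum, which you defer. Part (ii), however, contains a genuine gap that you yourself flag: ingredient (b), the claim that any connected simple matroid of rank $\geq 3$ admits a volume-lowering degeneration terminating only at the conjectured minimizer, is asserted without argument. The two mechanisms you suggest --- a direct sign-cancelling estimate inside the formula of Theorem~\ref{main}, or a Hodge--Riemann/log-concavity argument --- are both reasonable heuristics, but neither is developed, and there is no a priori reason either should succeed. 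In particular, valuativity of $\operatorname{shRVol}$ under a matroid-polytope subdivision gives an \emph{equality} via inclusion--exclusion, not an inequality, so extracting a monotone ``degeneration lowers volume'' statement from it would require additional input you have not supplied. Ingredient (a), the coloop-stripping monotonicity, is also only sketched. As it stands, then, your proposal identifies a reasonable strategy and its principal obstruction, but does not resolve the conjecture.
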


\section{Examples}

All the matroids in the examples are realizable to illustrate the algebro-geometric connections.  For each case let $\mathscr R(M)$ be a realization of a realizable matroid $M$.  We start with two examples of rank 3, whose associated wonderful compactifications are obtained from blowing-up points on $\PP^2$.  For these surfaces, we can compute the intersection numbers with classical algebraic geometry without much difficulty; see \cite[\S V.3]{Har77}.  We check in these examples that the classical results and the combinatorial ones introduced in this paper indeed agree.

\begin{eg} Let $M := U_{3,4}$.  Its lattice of flats $\mathscr L_M$ and the hyperplane arrangement $\mathcal A_{\mathscr R(M)}\subset \PP^2$ are given as follows.
\begin{center}
 \begin{tikzpicture}[scale=.8, vertices/.style={draw, fill=black, circle, inner sep=0pt}]
                \node [vertices, label=right:{${}$}] (0) at (-0+0,0){};
                \node [vertices, label=right:{${3}$}] (1) at (-2.25+0,1.33333){};
                \node [vertices, label=right:{${2}$}] (2) at (-2.25+1.5,1.33333){};
                \node [vertices, label=right:{${1}$}] (3) at (-2.25+3,1.33333){};
                \node [vertices, label=right:{${0}$}] (4) at (-2.25+4.5,1.33333){};
                \node [vertices, label=right:{${2, 3}$}] (5) at (-3.75+0,2.66667){};
                \node [vertices, label=right:{${1, 2}$}] (8) at (-3.75+1.5,2.66667){};
                \node [vertices, label=right:{${1, 3}$}] (6) at (-3.75+3,2.66667){};
                \node [vertices, label=right:{${0, 2}$}] (9) at (-3.75+4.5,2.66667){};
                \node [vertices, label=right:{${0, 1}$}] (10) at (-3.75+6,2.66667){};
                \node [vertices, label=right:{${0, 3}$}] (7) at (-3.75+7.5,2.66667){};
                \node [vertices, label=right:{${0, 1, 2, 3}$}] (11) at (-0+0,4){};
        \foreach \to/\from in {0/1, 0/2, 0/3, 0/4, 1/5, 1/6, 1/7, 2/8, 2/5, 2/9, 3/8, 3/6, 3/10, 4/9, 4/10, 4/7, 5/11, 6/11, 7/11, 8/11, 9/11, 10/11}
        \draw [-] (\to)--(\from);
\end{tikzpicture}
\includegraphics[height=43mm]{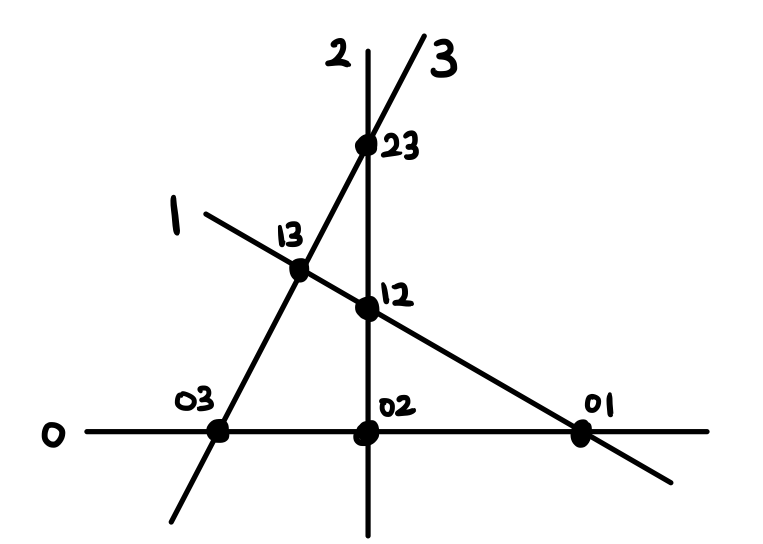}
\end{center}
The wonderful compactification $X_{\mathscr R(M)}$ is given by blowing-up the six points $L_{0,1} , \ldots, L_{2,3}$.  The volume polynomial is
\begin{multline*}
VP_M(\underline t)  = -2{t}_{1}^{2}-2{t}_{3}^{2}-2{t}_{0}^{2}-2{t}_{2}^{2}+2{t}_{3}{t}_{2,3}+2{t}_{2}{t}_{2,3}-{t}_{2,3}^{2}+2{t}_{1}{t}_{1,3}+2{t}_{3}{t}_{1,3}-{t}_{1,3}^{2}+2{t}_{3}{t}_{0,3}+2{t}_{0}{t}_{0,3}\\
 -{t}_{0,3}^{2}+2{t}_{1}{t}_{1,2}+2{t}_{2}{t}_{1,2}-{t}_{1,2}^{2}+2{t}_{0}{t}_{0,2}+2{t}_{2}{t}_{0,2}-{t}_{0,2}^{2}+2{t}_{1}{t}_{0,1}+2{t}_{0}{t}_{0,1}-{t}_{0,1}^{2}.
\end{multline*}

Notice that the coefficient of $t_i^2$ is $-2$ as expected from Corollary \ref{volpol} since $M/\{i\} \simeq U_{2,3}$ so that $|\mu^1(M/\{i\})| = 2$, and likewise the coefficients of $t_{i,j}^2$ are $-1$.  The rest of the coefficients are maximal chains, so the coefficient is ${2\choose 1,1} = 2$.  The shifted rank volume of $M$ is $(4)(-2)(1) +(12)(2)(2) + (6)(-4) = 16 = 4^2$, as expected from Theorem \ref{realmax}.

Let $\pi: X_{\mathscr R(M)} \to \PP^2$ be the blow-down map, $\widetilde H := \pi^* H$ the pullback of the hyperplane class $H\subset \PP^2$, and $E_{ij}$'s the exceptional divisors from the blown-up points.  Then $\operatorname{Pic} X_{\mathscr R(M)} = \ZZ\{\widetilde H, E_{01}, \ldots, E_{23}\}$, where intersection pairing of divisors are $E_{ii'}\cdot E_{jj'} =0 \ \forall \{i,i'\} \neq \{j,j'\}$, $E_{ii'} \cdot \widetilde H = 0$, $\widetilde H\cdot \widetilde H = 1$, and $E_{ii'} \cdot E_{ii'} = -1$.  Hence, $x_0 = \pi^* L_0 = \widetilde H + E_{01} + E_{02} + E_{03}$, so that $x_0^2 = 1 -1 -1 -1 = -2$, as expected.  Similarly, one computes that the shifted rank volume of $M$ is $4^2 = 16$.  Alternatively, note that the map $X_{\mathscr R(M)}\to \PP(H^0(4\widetilde H))$ given by the divisor $4\widetilde H$ factors birationally through $\PP^2$ as the 4-tuple Veronese embedding $\PP^2 \hookrightarrow \PP^{14}$, whose degree is 16.

We remark that the complete linear system $|3\widetilde H - E_{01} - \cdots - E_{23}|$ defines a birational map $X_{\mathscr R(M)}\to \PP^3$ whose image is the Cayley nodal cubic surface (as $(3\widetilde H - E_{01} - \cdots - E_{23})^2 = 3$).  Indeed, as $3\widetilde H - E_{01} - \cdots - E_{23} = 3(x_0 + x_{0,1} + x_{0,2} + x_{0,3} ) - (x_{0,1} + \cdots + x_{2,3}) = 3x_0 + 2x_{0,1}+2x_{0,2}+2x_{0,3} - x_{1,2}-x_{1,3}-x_{2,3}$, evaluating $VP_M$ respectively gives $-2\cdot 3^2 - 3\cdot 2^2 - 3\cdot 1^1 + 3\cdot2\cdot3\cdot2 = 3$.
\end{eg}

\begin{eg}
Let $M = U_{1,1} \oplus U_{2,3}$, another matroid of rank 3 on 4 elements, but not uniform as the above example.  Its lattice of flats $\mathscr L_M$ and the hyperplane arrangement $\mathcal A_{\mathscr R(M)}$ are as follows.
\begin{center}
\begin{tikzpicture}[scale=1, vertices/.style={draw, fill=black, circle, inner sep=0pt}]
                \node [vertices, label=right:{${}$}] (0) at (-0+0,0){};
                \node [vertices, label=right:{${3}$}] (1) at (-2.25+0,1.33333){};
                \node [vertices, label=right:{${2}$}] (2) at (-2.25+1.5,1.33333){};
                \node [vertices, label=right:{${1}$}] (3) at (-2.25+3,1.33333){};
                \node [vertices, label=right:{${0}$}] (4) at (-2.25+4.5,1.33333){};
                \node [vertices, label=right:{${1, 2, 3}$}] (8) at (-2.25+0,2.66667){};
                \node [vertices, label=right:{${0, 3}$}] (5) at (-2.25+1.5,2.66667){};
                \node [vertices, label=right:{${0, 2}$}] (6) at (-2.25+3,2.66667){};
                \node [vertices, label=right:{${0, 1}$}] (7) at (-2.25+4.5,2.66667){};
                \node [vertices, label=right:{${0, 1, 2, 3}$}] (9) at (-0+0,4){};
        \foreach \to/\from in {0/1, 0/2, 0/3, 0/4, 1/8, 1/5, 2/8, 2/6, 3/8, 3/7, 4/5, 4/6, 4/7, 5/9, 6/9, 7/9, 8/9}
        \draw [-] (\to)--(\from);
\end{tikzpicture}
\includegraphics[height=48mm]{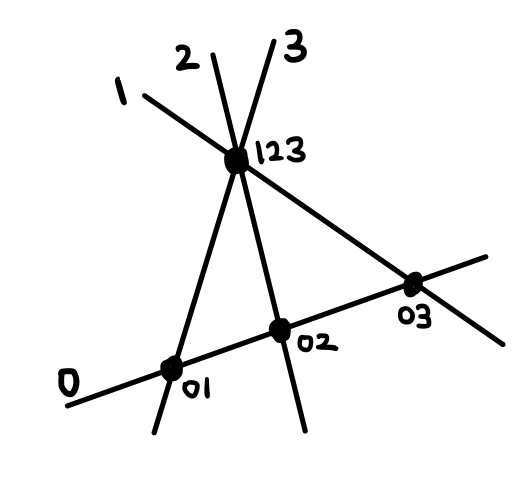}
\end{center}
Its wonderful compactification $\pi: X_{\mathscr R(M)}\to \PP^2$ is the plane $\PP^2$ blown-up at four points $L_{01}, L_{02}, L_{03}, L_{123}$.  The volume polynomial is
\begin{multline*}
VP_M(\underline t)  =-{t}_{{3}}^{2}-{t}_{{2}}^{2}-{t}_{{1}}^{2}-2 {t}_{{0}}^{2}+2{t}_{{3}} {t}_{{0,3}}+2 {t}_{{0}} {t}_{{0,3}}-{t}_{{0,3}}^{2}+2{t}_{{2}} {t}_{{0,2}}+2 {t}_{{0}} {t}_{{0,2}}\\
-{t}_{{0,2}}^{2} +2{t}_{{1}} {t}_{{0,1}}+2 {t}_{{0}} {t}_{{0,1}}-{t}_{{0,1}}^{2}+2{t}_{{3}} {t}_{{1,2,3}}+2 {t}_{{2}} {t}_{{1,2,3}}+2 {t}_{{1}}{t}_{{1,2,3}}-{t}_{{1,2,3}}^{2}.
\end{multline*}
Notice that the coefficient of $t_0^2$ is $-2$ while those of $t_i^2$ ($i\neq 0$) are $-1$, since $\mathscr L_{M/0} \simeq \mathscr L_{U_{2,3}}$ whereas $\mathscr L_{M/1} \simeq \mathscr L_{U_{2,2}}$ (the reduced chromatic polynomials of $U_{2,3}$ and $U_{2,2}$ are $t-2$ and $t-1$).  The shifted rank volume of $M$ is $-5 + (4)(-2^2) + 9(2)(1\cdot 2) = 15 <16$.

Again, let $\widetilde H := \pi^*H$ the pullback of the hyperplane class $H$, and $E_{ij}$ the exceptional divisors of from the blown-up points.  As in the proof of Theorem \ref{realmax}, the shifted rank divisor $D_M$ is $4\widetilde H - E_{123}$, whose volume is $(4\widetilde H - E_{123})^2 = 16- 1 = 15$.  Alternatively, the map $Y_M\to \PP(H^0(4\widetilde H - E_{123}))$ factors birational through $\PP^2$ as a rational map $\PP^2 \to \PP^{13}$ given by a graded linear system $L\subset H^0(\mathscr O_{\PP^2}(4))$ consisting of quartics through the point $H_{123}$.  Its image is the blow-up of a point in $\PP^2$ embedded in $\PP^{13}$ with degree 15.  In summary, we have the commuting diagram
$$\xymatrix{
&X_{\mathscr R(M)} \ar[r] \ar[rd]_\pi &\operatorname{Bl}_{H_{123}}\PP^2 \ar[d] \ar@{^(->}[rrd]\\
& &\PP^2 \ar@{^(->}[r]^{Veronese\quad } &\PP^{{4+2 \choose 2}-1} \ar@{-->}[r] &\PP^{13}.
}$$
\end{eg}

\begin{eg}
We feature one example of rank 4.  Let $M := U_{2,2} \oplus U_{2,3}$.  Its lattice of flats $\mathscr L_M$ and its hyperplane arrangement $\mathcal A_{\mathscr R(M)}$ can be illustrated as follows.

\begin{tabular}{p{10cm} p{5cm}}
\vspace{20pt}
\begin{tikzpicture}[scale=0.9, vertices/.style={draw, fill=black, circle, inner sep=0pt}]
                \node [vertices, label=right:{${}$}] (0) at (-0+0,0){};
                \node [vertices, label=right:{${4}$}] (1) at (-3+0,1.33333){};
                \node [vertices, label=right:{${3}$}] (2) at (-3+1.5,1.33333){};
                \node [vertices, label=right:{${2}$}] (3) at (-3+3,1.33333){};
                \node [vertices, label=right:{${1}$}] (4) at (-3+4.5,1.33333){};
                \node [vertices, label=right:{${0}$}] (5) at (-3+6,1.33333){};
                \node [vertices, label=right:{${4, 2, 3}$}] (13) at (-5.25+0,2.66667){};
                \node [vertices, label=right:{${1, 2}$}] (8) at (-5.25+1.5,2.66667){};
                \node [vertices, label=right:{${4, 1}$}] (6) at (-5.25+3,2.66667){};
                \node [vertices, label=right:{${1, 3}$}] (7) at (-5.25+4.5,2.66667){};
                \node [vertices, label=right:{${0, 4}$}] (9) at (-5.25+6,2.66667){};
                \node [vertices, label=right:{${0, 3}$}] (10) at (-5.25+7.5,2.66667){};
                \node [vertices, label=right:{${0, 2}$}] (11) at (-5.25+9,2.66667){};
                \node [vertices, label=right:{${0, 1}$}] (12) at (-5.25+10.5,2.66667){};
                \node [vertices, label=right:{${1, 2, 3, 4}$}] (17) at (-3+0,4){};
                \node [vertices, label=right:{${0, 2, 3, 4}$}] (18) at (-3+1.5,4){};
                \node [vertices, label=right:{${0, 1, 2}$}] (16) at (-3+3,4){};
                \node [vertices, label=right:{${4, 0, 1}$}] (14) at (-3+4.5,4){};
                \node [vertices, label=right:{${0, 1, 3}$}] (15) at (-3+6,4){};
                \node [vertices, label=right:{${0, 1, 2, 3, 4}$}] (19) at (-0+0,5.33333){};
        \foreach \to/\from in {0/1, 0/2, 0/3, 0/4, 0/5, 1/13, 1/9, 1/6, 2/13, 2/10, 2/7, 3/8, 3/13, 3/11, 4/8, 4/12, 4/6, 4/7, 5/9, 5/10, 5/11, 5/12, 6/17, 6/14, 7/17, 7/15, 8/16, 8/17, 9/14, 9/18, 10/18, 10/15, 11/16, 11/18, 12/16, 12/14, 12/15, 13/17, 13/18, 14/19, 15/19, 16/19, 17/19, 18/19}
        \draw [-] (\to)--(\from);
\end{tikzpicture}

&
\vspace{-10pt}
\includegraphics[height=75mm]{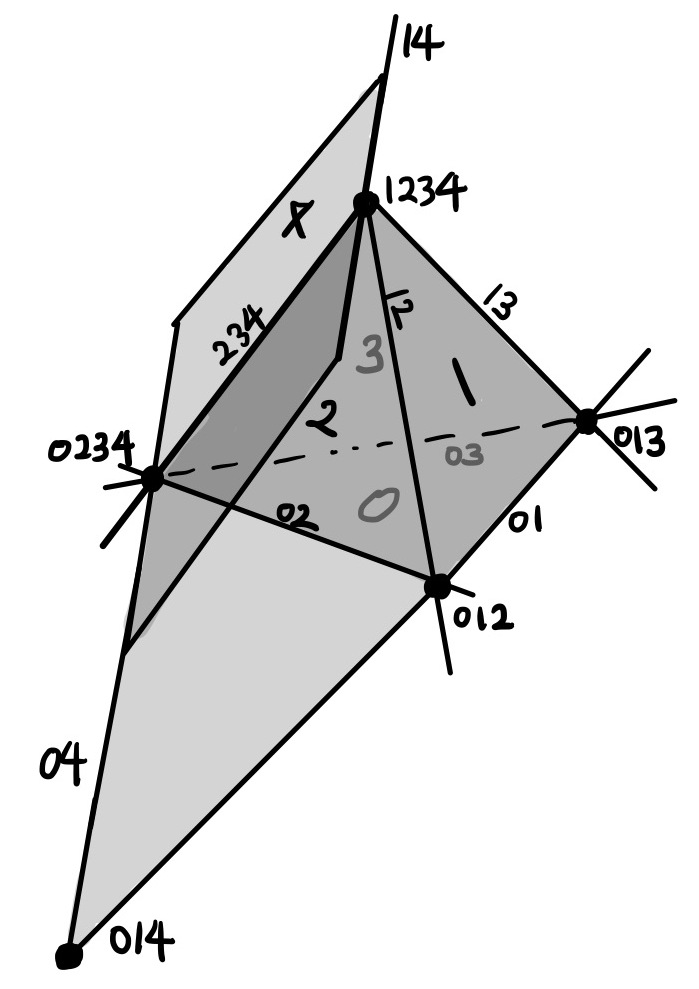}
\end{tabular}

The wonderful compactification $\pi: X_{\mathscr R(M)}\to \PP^3$ is obtained by blowing-up the four points, then the strict transforms of the 8 lines.  The volume polynomial is 
\begin{multline*}
VP_M(\underline t) = {t}_{ {4}}^{3}+{t}_{ {3}}^{3}+{t}_{ {2}}^{3}+2 {t}_{ {1}}^{3}+2 {t}_{ {0}}^{3}-3 {t}_{ {4}} {t}_{ {4,1}}^{2}-3 {t}_{ {1}} {t}_{ {4,1}}^{2}+2 {t}_{ {4,1}}^{3}-3 {t}_{ {3}} {t}_{ {1,3}}^{2}-3 {t}_{ {1}} {t}_{ {1,3}}^{2}+2 {t}_{ {1,3}}^{3}-3 {t}_{ {2}} {t}_{ {1,2}}^{2}\\
-3 {t}_{ {1}} {t}_{ {1,2}}^{2}+2 {t}_{ {1,2}}^{3}-3 {t}_{ {4}} {t}_{ {0,4}}^{2}-3 {t}_{ {0}} {t}_{ {0,4}}^{2}+2 {t}_{ {0,4}}^{3}-3 {t}_{ {3}} {t}_{ {0,3}}^{2}-3 {t}_{ {0}} {t}_{ {0,3}}^{2}+2 {t}_{ {0,3}}^{3}-3 {t}_{ {2}} {t}_{ {0,2}}^{2}-3 {t}_{ {0}} {t}_{ {0,2}}^{2}\\
+2 {t}_{ {0,2}}^{3}-6 {t}_{ {1}} {t}_{ {0,1}}^{2}-6 {t}_{ {0}} {t}_{ {0,1}}^{2}+4 {t}_{ {0,1}}^{3}-3 {t}_{ {4}} {t}_{ {4,2,3}}^{2}-3 {t}_{ {3}} {t}_{ {4,2,3}}^{2}-3 {t}_{ {2}} {t}_{ {4,2,3}}^{2}+2 {t}_{ {4,2,3}}^{3}-3 {t}_{ {4}}^{2} {t}_{ {4,0,1}}\\
-3 {t}_{ {1}}^{2} {t}_{ {4,0,1}} -3 {t}_{ {0}}^{2} {t}_{ {4,0,1}}+6 {t}_{ {4}} {t}_{ {4,1}} {t}_{ {4,0,1}}+6 {t}_{ {1}} {t}_{ {4,1}} {t}_{ {4,0,1}}-3 {t}_{ {4,1}}^{2} {t}_{ {4,0,1}}+6 {t}_{ {4}} {t}_{ {0,4}} {t}_{ {4,0,1}}+6 {t}_{ {0}} {t}_{ {0,4}} {t}_{ {4,0,1}}\\
-3 {t}_{ {0,4}}^{2} {t}_{ {4,0,1}}+6 {t}_{ {1}} {t}_{ {0,1}} {t}_{ {4,0,1}}+6 {t}_{ {0}} {t}_{ {0,1}} {t}_{ {4,0,1}}-3 {t}_{ {0,1}}^{2} {t}_{ {4,0,1}}-3 {t}_{ {4}} {t}_{ {4,0,1}}^{2}-3 {t}_{ {1}} {t}_{ {4,0,1}}^{2}-3 {t}_{ {0}} {t}_{ {4,0,1}}^{2}\\
+{t}_{ {4,0,1}}^{3}-3 {t}_{ {3}}^{2} {t}_{ {0,1,3}}-3 {t}_{ {1}}^{2} {t}_{ {0,1,3}}-3 {t}_{ {0}}^{2} {t}_{ {0,1,3}}+6 {t}_{ {3}} {t}_{ {1,3}} {t}_{ {0,1,3}}+6 {t}_{ {1}} {t}_{ {1,3}} {t}_{ {0,1,3}}-3 {t}_{ {1,3}}^{2} {t}_{ {0,1,3}}\\
+6 {t}_{ {3}} {t}_{ {0,3}} {t}_{ {0,1,3}}+6 {t}_{ {0}} {t}_{ {0,3}} {t}_{ {0,1,3}}-3 {t}_{ {0,3}}^{2} {t}_{ {0,1,3}}+6 {t}_{ {1}} {t}_{ {0,1}} {t}_{ {0,1,3}}+6 {t}_{ {0}} {t}_{ {0,1}} {t}_{ {0,1,3}}-3 {t}_{ {0,1}}^{2} {t}_{ {0,1,3}}-3 {t}_{ {3}} {t}_{ {0,1,3}}^{2}\\
-3 {t}_{ {1}} {t}_{ {0,1,3}}^{2}-3 {t}_{ {0}} {t}_{ {0,1,3}}^{2}+{t}_{ {0,1,3}}^{3}-3 {t}_{ {2}}^{2} {t}_{ {0,1,2}}-3 {t}_{ {1}}^{2} {t}_{ {0,1,2}}-3 {t}_{ {0}}^{2} {t}_{ {0,1,2}}+6 {t}_{ {2}} {t}_{ {1,2}} {t}_{ {0,1,2}}+6 {t}_{ {1}} {t}_{ {1,2}} {t}_{ {0,1,2}}\\
-3 {t}_{ {1,2}}^{2} {t}_{ {0,1,2}}+6 {t}_{ {2}} {t}_{ {0,2}} {t}_{ {0,1,2}}+6 {t}_{ {0}} {t}_{ {0,2}} {t}_{ {0,1,2}}-3 {t}_{ {0,2}}^{2} {t}_{ {0,1,2}}+6 {t}_{ {1}} {t}_{ {0,1}} {t}_{ {0,1,2}}+6 {t}_{ {0}} {t}_{ {0,1}} {t}_{ {0,1,2}}\\
-3 {t}_{ {0,1}}^{2} {t}_{ {0,1,2}}-3 {t}_{ {2}} {t}_{ {0,1,2}}^{2}-3 {t}_{ {1}} {t}_{ {0,1,2}}^{2}-3 {t}_{ {0}} {t}_{ {0,1,2}}^{2}+{t}_{ {0,1,2}}^{3}-3 {t}_{ {4}}^{2} {t}_{ {1,2,3,4}}-3 {t}_{ {3}}^{2} {t}_{ {1,2,3,4}}-3 {t}_{ {2}}^{2} {t}_{ {1,2,3,4}}\\
-6 {t}_{ {1}}^{2} {t}_{ {1,2,3,4}}+6 {t}_{ {4}} {t}_{ {4,1}} {t}_{ {1,2,3,4}}+6 {t}_{ {1}} {t}_{ {4,1}} {t}_{ {1,2,3,4}}-3 {t}_{ {4,1}}^{2} {t}_{ {1,2,3,4}}+6 {t}_{ {3}} {t}_{ {1,3}} {t}_{ {1,2,3,4}}+6 {t}_{ {1}} {t}_{ {1,3}} {t}_{ {1,2,3,4}}\\
-3 {t}_{ {1,3}}^{2} {t}_{ {1,2,3,4}}+6 {t}_{ {2}} {t}_{ {1,2}} {t}_{ {1,2,3,4}}+6 {t}_{ {1}} {t}_{ {1,2}} {t}_{ {1,2,3,4}}-3 {t}_{ {1,2}}^{2} {t}_{ {1,2,3,4}}+6 {t}_{ {4}} {t}_{ {4,2,3}} {t}_{ {1,2,3,4}}+6 {t}_{ {3}} {t}_{ {4,2,3}} {t}_{ {1,2,3,4}}\\
+6 {t}_{ {2}} {t}_{ {4,2,3}} {t}_{ {1,2,3,4}}-3 {t}_{ {4,2,3}}^{2} {t}_{ {1,2,3,4}}-3 {t}_{ {4}} {t}_{ {1,2,3,4}}^{2}-3 {t}_{ {3}} {t}_{ {1,2,3,4}}^{2}-3 {t}_{ {2}} {t}_{ {1,2,3,4}}^{2}-3 {t}_{ {1}} {t}_{ {1,2,3,4}}^{2}+{t}_{ {1,2,3,4}}^{3}\\
-3 {t}_{ {4}}^{2} {t}_{ {0,2,3,4}}-3 {t}_{ {3}}^{2} {t}_{ {0,2,3,4}}-3 {t}_{ {2}}^{2} {t}_{ {0,2,3,4}}-6 {t}_{ {0}}^{2} {t}_{ {0,2,3,4}}+6 {t}_{ {4}} {t}_{ {0,4}} {t}_{ {0,2,3,4}}+6 {t}_{ {0}} {t}_{ {0,4}} {t}_{ {0,2,3,4}}\\
-3 {t}_{ {0,4}}^{2} {t}_{ {0,2,3,4}}+6 {t}_{ {3}} {t}_{ {0,3}} {t}_{ {0,2,3,4}}+6 {t}_{ {0}} {t}_{ {0,3}} {t}_{ {0,2,3,4}}-3 {t}_{ {0,3}}^{2} {t}_{ {0,2,3,4}}+6 {t}_{ {2}} {t}_{ {0,2}} {t}_{ {0,2,3,4}}+6 {t}_{ {0}} {t}_{ {0,2}} {t}_{ {0,2,3,4}}\\
-3 {t}_{ {0,2}}^{2} {t}_{ {0,2,3,4}}+6 {t}_{ {4}} {t}_{ {4,2,3}} {t}_{ {0,2,3,4}}+6 {t}_{ {3}} {t}_{ {4,2,3}} {t}_{ {0,2,3,4}}+6 {t}_{ {2}} {t}_{ {4,2,3}} {t}_{ {0,2,3,4}}-3 {t}_{ {4,2,3}}^{2} {t}_{ {0,2,3,4}}-3 {t}_{ {4}} {t}_{ {0,2,3,4}}^{2}\\
-3 {t}_{ {3}} {t}_{ {0,2,3,4}}^{2}-3 {t}_{ {2}} {t}_{ {0,2,3,4}}^{2}-3 {t}_{ {0}} {t}_{ {0,2,3,4}}^{2}+{t}_{ {0,2,3,4}}^{3}.\\
\end{multline*}
Its shifted rank volume is 112, which is the smallest for simple matroids of rank 4 on 5 elements.  Let $\widetilde H = \pi^*H$ be the pullback of hyperplane $H\subset \PP^3$ again, and let $E_F$'s be the exceptional divisors from the blow-ups.  The shifted rank divisor $D_M$ is $5\widetilde H - E_{0234} - E_{1234} - E_{234}$.  Denote by $P,Q$ the two points $L_{0234}, L_{1234}$, and $\ell$ the line $L_{234}$.  Then the map given by $D_M$ on $X_{\mathscr R(M)}$ factors through a rational map on $\PP^3$ as follows.  First, consider the rational map given by a linear series $L\subset H^0(\mathscr O_{\PP^3}(5))$ consisting of quintic hypersurfaces through the two points $P,Q$.  Then, consider the map from $\operatorname{Bl}_{P,Q}\PP^3$ given by divisors in $H^0(\operatorname{Bl}_{P,Q}\PP^3)$ containing the strict transform of the line $\ell$.  The image of the map is the blow-up of $\ell$ in $\PP^3$ embedded in $\PP^{49}$ with degree 112.  In summary, we have
$$\xymatrix{
&Y_M \ar[r] \ar[rdd]_\pi &\operatorname{Bl}_L(\operatorname{Bl}_{P,Q}\PP^3) \ar[d]\ar[rrd]\\
& &\operatorname{Bl}_{P,Q}\PP^3 \ar[d] \ar@/^/[rrd] & &\operatorname{Bl}_L\PP^3 \ar@{^(->}[rd]\\
& &\PP^3 \ar@{^(->}[r]^{Veronese\quad } &\PP^{{5+3\choose 3}-1} \ar@{-->}[r] &\PP^{53}  \ar@{-->}[r] &\PP^{49}.
}$$
\end{eg}


\bigskip
\noindent\textbf{Acknowledgements.} The author would like to thank Bernd Sturmfels for suggesting this problem, Justin Chen for many helpful discussions and his Macaulay2 package on matroids \cite{Che15}, Alex Fink for generously providing the sketch of the proof for valuativeness of the volume polynomial, and Federico Ardila for inspiring the author to pursue this direction of research.  The author is also grateful for helpful conversations with David Eisenbud, June Huh, Vic Reiner, and Mengyuan Zhang, and thanks the two references for their careful reading.  The author acknowledges the support of James H. Simons Fellowship the spring of 2018.

\begin{bibdiv}

\begin{biblist}

\bib{AA17}{article}{
   author={Ardila, Federico},
   author={Aguiar, Marcelo},
   title={Hopf monoids and generalized permutahedra},
   journal={preprint},
   eprint={arXiv:arXiv:1709.07504},
   date={2017}
}

\bib{AHK18}{article}{
   author={Adiprasito, Karim},
   author={Huh, June},
   author={Katz, Eric},
   title={Hodge Theory for combinatorial geometries},
   journal={Annals of Mathematics},
   date={2018},
   volume={188},
   pages={381--452}
}


\bib{AFR10}{article}{
   author={Ardila, Federico},
   author={Fink, Alex},
   author={Rinc\'on, Felipe},
   title={Valuations for matroid polytope subdivisions},
   journal={Canad. J. Math.},
   volume={62},
   date={2010},
   number={6},
   pages={1228--1245},
   issn={0008-414X},
}

\bib{AK06}{article}{
   author={Ardila, Federico},
   author={Klivans, Caroline J.},
   title={The Bergman complex of a matroid and phylogenetic trees},
   journal={J. Combin. Theory Ser. B},
   volume={96},
   date={2006},
   number={1},
   pages={38--49},
   issn={0095-8956},
}

\bib{Bak18}{article}{
   author={Baker, Matthew},
   title={Hodge theory in combinatorics},
   journal={Bull. Amer. Math. Soc. (N.S.)},
   volume={55},
   date={2018},
   number={1},
   pages={57--80},
   issn={0273-0979},
}

\bib{BJR09}{article}{
   author={Billera, Louis J.},
   author={Jia, Ning},
   author={Reiner, Victor},
   title={A quasisymmetric function for matroids},
   journal={European J. Combin.},
   volume={30},
   date={2009},
   number={8},
   pages={1727--1757},
   issn={0195-6698},
}

\bib{BGW03}{book}{
   author={Borovik, Alexandre V.},
   author={Gelfand, I. M.},
   author={White, Neil},
   title={Coxeter matroids},
   series={Progress in Mathematics},
   volume={216},
   publisher={Birkh\"auser Boston, Inc., Boston, MA},
   date={2003},
   pages={xxii+264},
   isbn={0-8176-3764-8},
}

\bib{Bri96}{article}{
   author={Brion, Michel},
   title={Piecewise polynomial functions, convex polytopes and enumerative
   geometry},
   conference={
      title={Parameter spaces},
      address={Warsaw},
      date={1994},
   },
   book={
      series={Banach Center Publ.},
      volume={36},
      publisher={Polish Acad. Sci. Inst. Math., Warsaw},
   },
   date={1996},
   pages={25--44},
}

\bib{BH93}{book}{
   author={Bruns, Winfried},
   author={Herzog, J\"urgen},
   title={Cohen-Macaulay rings},
   series={Cambridge Studies in Advanced Mathematics},
   volume={39},
   publisher={Cambridge University Press, Cambridge},
   date={1993},
   pages={xii+403},
   isbn={0-521-41068-1},
}

\bib{Che15}{article}{
   author={Chen, Justin},
   title={Matroids: a Macaulay2 package},
   journal={preprint},
   eprint={arXiv:1511.04618},
   date={2015}
}

\bib{CLP15}{article}{
   author={Clancy, Julien},
   author={Leake, Timothy},
   author={Payne, Sam},
   title={A note on Jacobians, Tutte polynomials, and two-variable zeta
   functions of graphs},
   journal={Exp. Math.},
   volume={24},
   date={2015},
   number={1},
   pages={1--7},
   issn={1058-6458},
}

\bib{CLS11}{book}{
   author={Cox, David A.},
   author={Little, John B.},
   author={Schenck, Henry K.},
   title={Toric varieties},
   series={Graduate Studies in Mathematics},
   volume={124},
   publisher={American Mathematical Society, Providence, RI},
   date={2011},
   pages={xxiv+841},
   isbn={978-0-8218-4819-7},
}

\bib{Dan78}{article}{
   author={Danilov, V. I.},
   title={The geometry of toric varieties},
   language={Russian},
   journal={Uspekhi Mat. Nauk},
   volume={33},
   date={1978},
   number={2(200)},
   pages={85--134, 247},
   issn={0042-1316},
}

\bib{DCP95}{article}{
   author={De Concini, C.},
   author={Procesi, C.},
   title={Wonderful models of subspace arrangements},
   journal={Selecta Math. (N.S.)},
   volume={1},
   date={1995},
   number={3},
   pages={459--494},
   issn={1022-1824},
}

\bib{Den14}{article}{
   author={Denham, Graham},
   title={Toric and tropical compactifications of hyperplane complements},
   language={English, with English and French summaries},
   journal={Ann. Fac. Sci. Toulouse Math. (6)},
   volume={23},
   date={2014},
   number={2},
   pages={297--333},
   issn={0240-2963},
}

\bib{Der09}{article}{
   author={Derksen, Harm},
   title={Symmetric and quasi-symmetric functions associated to polymatroids},
   journal={J. Algebraic Combin.},
   volume={30},
   date={2009},
   number={1},
   pages={43--86}   
}

\bib{DF10}{article}{
   author={Derksen, Harm},
   author={Fink, Alex},
   title={Valuative invariants for polymatroids},
   journal={Adv. Math.},
   volume={225},
   date={2010},
   number={4},
   pages={1840--1892},
   issn={0001-8708},
}

\bib{Eis95}{book}{
   author={Eisenbud, David},
   title={Commutative algebra},
   series={Graduate Texts in Mathematics},
   volume={150},
   note={With a view toward algebraic geometry},
   publisher={Springer-Verlag, New York},
   date={1995},
   pages={xvi+785},
   isbn={0-387-94268-8},
   isbn={0-387-94269-6},
}

\bib{EH16}{book}{
   author={Eisenbud, David},
   author={Harris, Joe},
   title={3264 and all that---a second course in algebraic geometry},
   publisher={Cambridge University Press, Cambridge},
   date={2016},
   pages={xiv+616},
   isbn={978-1-107-60272-4},
   isbn={978-1-107-01708-5},
}

\bib{ELM+05}{article}{
   author={Ein, L.},
   author={Lazarsfeld, R.},
   author={Musta\c t\v a, M.},
   author={Nakamaye, M.},
   author={Popa, M.},
   title={Asymptotic invariants of line bundles},
   journal={Pure Appl. Math. Q.},
   volume={1},
   date={2005},
   number={2, Special Issue: In memory of Armand Borel.},
   pages={379--403},
   issn={1558-8599},
}

\bib{Eur19}{article}{
   author={Eur, Christopher},
   title={On geometrically distinguished divisors of a matroid},
   journal={(in preparation)},
   date={2019}
}

\bib{Fei05}{article}{
   author={Feichtner, Eva Maria},
   title={De Concini-Procesi wonderful arrangement models: a discrete
   geometer's point of view},
   conference={
      title={Combinatorial and computational geometry},
   },
   book={
      series={Math. Sci. Res. Inst. Publ.},
      volume={52},
      publisher={Cambridge Univ. Press, Cambridge},
   },
   date={2005},
   pages={333--360},
}

\bib{FY04}{article}{
   author={Feichtner, Eva Maria},
   author={Yuzvinsky, Sergey},
   title={Chow rings of toric varieties defined by atomic lattices},
   journal={Invent. Math.},
   volume={155},
   date={2004},
   number={3},
   pages={515--536},
   issn={0020-9910},
}

\bib{Ful93}{book}{
   author={Fulton, William},
   title={Introduction to toric varieties},
   series={Annals of Mathematics Studies},
   volume={131},
   note={The William H. Roever Lectures in Geometry},
   publisher={Princeton University Press, Princeton, NJ},
   date={1993},
   pages={xii+157},
   isbn={0-691-00049-2},
}

\bib{FS97}{article}{
   author={Fulton, William},
   author={Sturmfels, Bernd},
   title={Intersection theory on toric varieties},
   journal={Topology},
   volume={36},
   date={1997},
   number={2},
   pages={335--353},
   issn={0040-9383},
}

\bib{GGMS87}{article}{
   author={Gel\cprime fand, I. M.},
   author={Goresky, R. M.},
   author={MacPherson, R. D.},
   author={Serganova, V. V.},
   title={Combinatorial geometries, convex polyhedra, and Schubert cells},
   journal={Adv. in Math.},
   volume={63},
   date={1987},
   number={3},
   pages={301--316},
   issn={0001-8708},
}

\bib{GK16}{article}{
   author={Gonz\'alez, Jos\'e Luis},
   author={Karu, Kalle},
   title={Some non-finitely generated Cox rings},
   journal={Compos. Math.},
   volume={152},
   date={2016},
   number={5},
   pages={984--996},
   issn={0010-437X},
}

\bib{Har77}{book}{
   author={Hartshorne, Robin},
   title={Algebraic geometry},
   note={Graduate Texts in Mathematics, No. 52},
   publisher={Springer-Verlag, New York-Heidelberg},
   date={1977},
   pages={xvi+496},
   isbn={0-387-90244-9},
}

\bib{Huh12}{article}{
   author={Huh, June},
   title={Milnor numbers of projective hypersurfaces and the chromatic
   polynomial of graphs},
   journal={J. Amer. Math. Soc.},
   volume={25},
   date={2012},
   number={3},
   pages={907--927},
   issn={0894-0347},
}

\bib{Huh18}{article}{
   author={Huh, June},
   title={Tropical geometry of matroids},
   conference={
      title={Current developments in mathematics 2016},
   },
   book={
      publisher={Int. Press, Somerville, MA},
   },
   date={2018},
   pages={1--46},
   review={\MR{3837872}},
}

\bib{HK12}{article}{
   author={Huh, June},
   author={Katz, Eric},
   title={Log-concavity of characteristic polynomials and the Bergman fan of
   matroids},
   journal={Math. Ann.},
   volume={354},
   date={2012},
   number={3},
   pages={1103--1116},
   issn={0025-5831},
}

\bib{HW17}{article}{
   author={Huh, June},
   author={Wang, Botong},
   title={Enumeration of points, lines, planes, etc},
   journal={Acta Math.},
   volume={218},
   date={2017},
   number={2},
   pages={297--317},
   issn={0001-5962},
}

\bib{Kat16}{article}{
   author={Katz, Eric},
   title={Matroid theory for algebraic geometers},
   conference={
      title={Nonarchimedean and tropical geometry},
   },
   book={
      series={Simons Symp.},
      publisher={Springer, [Cham]},
   },
   date={2016},
   pages={435--517},
}

\bib{MR1034665}{article}{
   author={Keel, Sean},
   title={Intersection theory of moduli space of stable $n$-pointed curves
   of genus zero},
   journal={Trans. Amer. Math. Soc.},
   volume={330},
   date={1992},
   number={2},
   pages={545--574},
   issn={0002-9947},
}

\bib{Kav11}{article}{
   author={Kaveh, Kiumars},
   title={Note on cohomology rings of spherical varieties and volume
   polynomial},
   journal={J. Lie Theory},
   volume={21},
   date={2011},
   number={2},
   pages={263--283},
   issn={0949-5932},
}

\bib{KK12}{article}{
   author={Kaveh, Kiumars},
   author={Khovanskii, A. G.},
   title={Newton-Okounkov bodies, semigroups of integral points, graded
   algebras and intersection theory},
   journal={Ann. of Math. (2)},
   volume={176},
   date={2012},
   number={2},
   pages={925--978},
   issn={0003-486X},
}

\bib{Laz04}{book}{
   author={Lazarsfeld, Robert},
   title={Positivity in algebraic geometry. I},
   series={Ergebnisse der Mathematik und ihrer Grenzgebiete. 3. Folge. A
   Series of Modern Surveys in Mathematics [Results in Mathematics and
   Related Areas. 3rd Series. A Series of Modern Surveys in Mathematics]},
   volume={48},
   note={Classical setting: line bundles and linear series},
   publisher={Springer-Verlag, Berlin},
   date={2004},
   pages={xviii+387},
   isbn={3-540-22533-1},
}

\bib{LM09}{article}{
   author={Lazarsfeld, Robert},
   author={Musta\c t\u a, Mircea},
   title={Convex bodies associated to linear series},
   language={English, with English and French summaries},
   journal={Ann. Sci. \'Ec. Norm. Sup\'er. (4)},
   volume={42},
   date={2009},
   number={5},
   pages={783--835},
   issn={0012-9593},
}

\bib{LdMRS17}{article}{
   author={Lopez de Medrano, Lucia},
   author={Rincon, Felipe},
   author={Shaw, Kristin},
   title={Chern-Schwartz-MacPherson classes of matroids},
   journal={preprint},
   eprint={arXiv:1707.07303},
   date={2017}
}

\bib{MS15}{book}{
   author={Maclagan, Diane},
   author={Sturmfels, Bernd},
   title={Introduction to tropical geometry},
   series={Graduate Studies in Mathematics},
   volume={161},
   publisher={American Mathematical Society, Providence, RI},
   date={2015},
   pages={xii+363},
   isbn={978-0-8218-5198-2},
}

\bib{OT92}{book}{
   author={Orlik, Peter},
   author={Terao, Hiroaki},
   title={Arrangements of hyperplanes},
   series={Grundlehren der Mathematischen Wissenschaften [Fundamental
   Principles of Mathematical Sciences]},
   volume={300},
   publisher={Springer-Verlag, Berlin},
   date={1992},
   pages={xviii+325},
   isbn={3-540-55259-6},
}

\bib{Oxl11}{book}{
   author={Oxley, James},
   title={Matroid theory},
   series={Oxford Graduate Texts in Mathematics},
   volume={21},
   edition={2},
   publisher={Oxford University Press, Oxford},
   date={2011},
   pages={xiv+684},
   isbn={978-0-19-960339-8},
}

\bib{Pos09}{article}{
   author={Postnikov, Alexander},
   title={Permutohedra, associahedra, and beyond},
   journal={Int. Math. Res. Not. IMRN},
   date={2009},
   number={6},
   pages={1026--1106},
   issn={1073-7928},
}

\bib{Sta12}{book}{
   author={Stanley, Richard P.},
   title={Enumerative combinatorics. Volume 1},
   series={Cambridge Studies in Advanced Mathematics},
   volume={49},
   edition={2},
   publisher={Cambridge University Press, Cambridge},
   date={2012},
   pages={xiv+626},
   isbn={978-1-107-60262-5},
}

\bib{Wel76}{book}{
   author={Welsh, D. J. A.},
   title={Matroid theory},
   note={L. M. S. Monographs, No. 8},
   publisher={Academic Press [Harcourt Brace Jovanovich, Publishers],
   London-New York},
   date={1976},
   pages={xi+433},
}

\bib{Whi86}{book}{
	author = {White, (Ed.)}
	place={Cambridge},
	series={Encyclopedia of Mathematics and its Applications},
	title={Theory of Matroids},
	DOI={10.1017/CBO9780511629563},
	publisher={Cambridge University Press},
	year={1986},
	collection={Encyclopedia of Mathematics and its Applications}
	}

\end{biblist}

\end{bibdiv}

\end{document}